\providecommand{\tabularnewline}{\\}
\theoremstyle{plain}
\newtheorem{thm}{\protect\theoremname}
\theoremstyle{remark}
\newtheorem{rem}[thm]{\protect\remarkname}
\theoremstyle{definition}
\newtheorem{example}[thm]{\protect\examplename}
\theoremstyle{plain}
\newtheorem{prop}[thm]{\protect\propositionname}
\theoremstyle{remark}
\newtheorem*{rem*}{\protect\remarkname}
\theoremstyle{definition}
\newtheorem{defn}[thm]{\protect\definitionname}
\providecommand{\definitionname}{Definition}
\providecommand{\examplename}{Example}
\providecommand{\propositionname}{Proposition}
\providecommand{\remarkname}{Remark}
\providecommand{\theoremname}{Theorem}
\begin{document}
\title{The universal logic of repeated experiments}
\author{Sergio Grillo\\
{\small{}Instituto Balseiro, Universidad Nacional de Cuyo and CONICET}\\[-5pt]
{\small{}Av. Bustillo 9500, San Carlos de Bariloche}\\[-5pt] {\small{}R8402AGP,
República Argentina}\\[-5pt] {\small{}$sergrill@gmail.com$}}
\maketitle
\begin{abstract}
Let $\mathsf{E}$ be the event space of an experiment that can be
indefinitely repeated. A natural question arises: given a countable
cardinal $\kappa$, which is the event space of the $\kappa$-times
repeated experiment? In the case of classical experiments, where $\mathsf{E}$
is a (complete) Boolean algebra on some set $S$, i.e. a \textit{classical
}or\textit{ distributive logic}, the answer is more or less well known:
the (complete) Boolean algebra on $S^{\kappa}$ generated by $\mathsf{E}^{\kappa}$.
But, what if $\mathsf{E}$ is not a Boolean algebra? In this paper
we give a constructive answer to this question for any $\kappa$ and
in the context of general orthocomplemented complete lattices, i.e.
\textit{general logics}. Concretely, given a general logic $\mathsf{E}$
defining the event space of a given experiment, we construct a logic
$\mathsf{U}_{\kappa}\left(\mathsf{E}\right)$ representing the event
space of the $\kappa$-times repeated experiment, in such a way that
$\mathsf{U}_{\kappa}\left(\mathsf{E}\right)$ is distributive if and
only if so is $\mathsf{E}$, and, as expected, $\mathsf{U}_{\kappa}\left(\mathsf{E}\right)$
and $\mathsf{E}$ are isomorphic when $\kappa=1$. We also extend
our construction to the case in which the event space changes from
one repetition to another and the cardinal $\kappa$ is arbitrary.
This gives rise to tensor products $\bigotimes_{\alpha\in\kappa}\mathsf{E}_{\alpha}$
of families $\left\{ \mathsf{E}_{\alpha}\right\} _{\alpha\in\kappa}$
of orthocomplemented complete lattices, in terms of which $\mathsf{U}_{\kappa}\left(\mathsf{E}\right)=\bigotimes_{\alpha\in\kappa}\mathsf{E}$.
\end{abstract}
\tableofcontents{}

\section{Introduction}

Given an experiment with set of results $S$ (the sample space),\footnote{To understand what we extactly mean by ``experiment'' and by ``result'',
see the Appendix.} consider a set $\mathsf{E}$ of propositions about such results:
the \textbf{event space} of the experiment. Assume $\mathsf{E}$ is
closed under the logical connectives: AND, OR and NOT, and contain
the trivial and absurd propositions. For a (physical) experiment involving
a classical system, the considered propositions used to be those of
the form: ``the result belongs to the subset $a\subseteq S$''.
So, $\mathsf{E}$ can be identified with a Boolean algebra on $S$.
In particular, $\mathsf{E}$ is a bounded (ortho)complemented distributive
lattice. The order $\subseteq$ of $\mathsf{E}$ is given by the inclusion
of subsets, the join $\cup$ by their union, the meet $\cap$ by their
intersection and the orthocomplementation $\cdot{}^{c}$ by the complement
of subsets. The bottom is $\mathbf{0}\coloneqq\emptyset$ and the
top is $\mathbf{1}\coloneqq S$. On the other hand, for experiments
involving a quantum system on a Hilbert space $\mathcal{H}$, the
event space $\mathsf{E}$ is identified with the set of all closed
subspaces of $\mathcal{H}$. (This is because the results of the experiment
are the eigenvalues of the self-adjoint operators on $\mathcal{H}$,
and each closed subspace is the eigenspace of some self-adjoint operator).
In this case, the event space is a complete orthocomplemented lattice
$\left(\mathsf{E},\subseteq,\cup,\cap,^{c},\mathbf{0},\mathbf{1}\right)$,
but it is not distributive. The order is given by the inclusion of
subspaces, the join by the closure of the sum, the meet by the intersection
and the orthocomplementation by the orthogonal complement. The bottom
and the top elements are the trivial subspace and the entire space,
respectively. In any case, the dictionary between lattice operations
and the logical connectives among propositions is given as follows:

\bigskip{}

\begin{tabular}{|c|c|}
\hline 
{\footnotesize{}$a\subseteq b$} & {\footnotesize{}$a\textrm{ implies }b$}\tabularnewline
\hline 
{\footnotesize{}$a\cup b$} & {\footnotesize{}$a\textrm{ or }b$}\tabularnewline
\hline 
{\footnotesize{}$a\cap b$} & {\footnotesize{}$a\textrm{ and }b$}\tabularnewline
\hline 
{\footnotesize{}$a^{c}$} & {\footnotesize{}$\textrm{not }a$}\tabularnewline
\hline 
\end{tabular}$\qquad$%
\begin{tabular}{|c|c|}
\hline 
{\footnotesize{}$0$} & {\footnotesize{}$\textrm{absurd proposition}$}\tabularnewline
\hline 
{\footnotesize{}$1$} & {\footnotesize{}$\textrm{trivial proposition}$}\tabularnewline
\hline 
\end{tabular}

\bigskip{}

Suppose we have an experiment that can be repeated an arbitrary number
of times. Then, a natural question arises: given $n\in\mathbb{N}$,
if the event space of the original experiment is $\mathsf{E}$, which
is the event space of the $n$-times repeated experiment? In the case
in which $\mathsf{E}$ is a Boolean algebra, the answer to this question
is more or less well known (although is not easy to find it explicitly
in the literature), and it is described in the Section \ref{cle}
of this paper. But, which is the answer if $\mathsf{E}$ is not a
Boolean algebra? The main aim of the paper is, given a countable cardinal
$\kappa$ and a (not necessarily distributive) complete orthocomplemented
lattice $\mathsf{E}$ (representing the event space of a given repeatable
experiment), to construct a complete orthocomplemented lattice $\mathsf{U}_{\kappa}\left(\mathsf{E}\right)$
representing, in a ``universal'' way, the event space of the $\kappa$-times
repeated experiment (including the case of infinite repetitions).
We say \textit{universal} in the following sense: if for some reason
the event space of the $\kappa$-times repeated experiment is taken
as a given complete orthocomplemented lattice $\mathsf{F}$, then
there must exist a unique epimorphism $\mathsf{U}_{\kappa}\left(\mathsf{E}\right)\twoheadrightarrow\mathsf{F}$.
Roughly speaking, $\mathsf{F}$ must be a quotient of $\mathsf{U}_{\kappa}\left(\mathsf{E}\right)$.
This construction is done along the Sections \ref{gexp} to \ref{unlo}.
In Section \ref{doue}, we show that $\mathsf{U}_{\kappa}\left(\mathsf{E}\right)$
is distributive if and only if so is $\mathsf{E}$. 

Our construction for $\left|\kappa\right|=1$ enable us to define,
for every orthocomplemented lattice $\mathsf{L}$, a complete orthocomplemented
lattice (see Section \ref{eec}), which is isomorphic to $\mathsf{L}$
if and only if $\mathsf{L}$ is complete, and which is distributive
if and only if so is $\mathsf{L}$ (as happen for the Dedekind-MacNeille
completion). 

Also, our construction can be extended to any cardinal $\kappa$ (not
necessarily countable), and even to the case in which the event space
changes from one repetition to another. Concretely, if the involved
event spaces are $\mathsf{E}_{\alpha}$, with $\alpha\in\kappa$,
then our construction gives rise to a particular kind of tensor product
$\bigotimes_{\alpha\in\kappa}\mathsf{E}_{\alpha}$ of orthocomplemented
complete lattices (see Section \ref{tpp}). (A similar object can
be found in Ref. \cite{sik}, p. 175, but in the restricted context
of Boolean algebras). It is worth mentioning that, for this tensor
product, the involved factors are not necessarily completely distributive
(see \cite{kubi}).

Finally, in Section \ref{fw}, we explain how we shall use in a future
work the logic $\mathsf{U}_{\kappa}\left(\mathsf{E}\right)$ to study
the foundations of \textit{subjective} probability, i.e. \textit{reasonable
expectation}, in the same sense as Cox \cite{cox} (see also \cite{cox2,jaynes,paris}),
but on general (non necessarily distributive) event spaces $\mathsf{E}$.

\bigskip{}

We shall assume the reader is familiar with the basic concepts of
lattice theory (see for instance \cite{blyth}).

\section{Classical experiments}

\label{cle} Consider a (classical) random experiment with sample
space $S$ and event space $\mathsf{E}\subseteq2^{S}$. We shall assume
that $\mathsf{E}$ is a \textit{complete Boolean algebra} on $S$,
i.e. $\mathsf{E}$ is closed under set complementation and arbitrary
intersections, and \textit{ipso facto} unions. (Note that, as a consequence,
$\emptyset,S\in\mathsf{E}$). In other words, we shall assume that
$\mathsf{E}$ is what we shall call a \textbf{classical logic}: a
complete (ortho)complemented \textit{distributive} lattice. 
\begin{rem}
For certain experiments, as those related to a statistical classical
mechanical system, the event space $\mathsf{L}$ is usually taken
as a $\sigma$-algebra, which is not necessarily complete. In theses
cases, we can consider the (complemented and distributive) completion
$\mathsf{E}_{\mathsf{L}}$ of $\mathsf{L}$, defined in Section \ref{eec}.
\end{rem}

\subsection{The event space of the repeated experiment}

If the experiment can be repeated indefinitely, it is natural to take
the sample space of the ``$N$-times repeated experiment'' as $S^{N}=S\times\cdots\times S$
(with $N$ factors) and its event space as the subset of $2^{S^{N}}$
formed out by the cartesian products 
\[
a_{1}\times\cdots\times a_{N}\subseteq S^{N},\quad\textrm{with}\quad a_{i}\in\mathsf{E}\quad\textrm{for all\ensuremath{\quad i=1,...,N}},
\]
that is to say, the elements
\[
a_{1}\times\cdots\times a_{N}\in\mathsf{E}^{N}=\underbrace{\mathsf{E}\times\cdots\times\mathsf{E}}_{N\textrm{-times}},
\]
together with all its possible unions inside $S^{N}$. This family
of sets gives exactly the complete Boolean subalgebra of $2^{S^{N}}$
generated by $\mathsf{E}^{N}$, which we shall indicate $\left\langle \mathsf{E}^{N}\right\rangle $,
i.e. 
\begin{equation}
\left\langle \mathsf{E}^{N}\right\rangle \coloneqq\left\{ {\textstyle \bigcup_{i\in I}}A_{i}\;:\;A_{i}\in\mathsf{E}^{N}\right\} .\label{EN}
\end{equation}

\begin{rem}
Above construction is similar to that appearing in the context of
\textit{product measures }(see for instance \cite{ash}, p. 101),
where the elements of the form $a_{1}\times\cdots\times a_{N}\in\mathsf{E}^{N}$
are called \textit{rectangles}.
\end{rem}

$\,$
\begin{rem}
Note that the intersection is closed inside $\mathsf{E}^{N}$. Concretely,
given 
\[
a_{1}\times\cdots\times a_{N},\;b_{1}\times\cdots\times b_{N}\in\mathsf{E}^{N},
\]
we have that 
\[
\left(a_{1}\times\cdots\times a_{N}\right)\cap\left(b_{1}\times\cdots\times b_{N}\right)=\left(a_{1}\cap b_{1}\right)\times\cdots\times\left(a_{N}\cap b_{N}\right).
\]
\end{rem}

$\,$
\begin{rem}
\label{vac} Recall that $a_{1}\times\cdots\times a_{N}=\emptyset$
if $a_{i}=\emptyset$ for some $i$. 
\end{rem}

Each element $a_{1}\times\cdots\times a_{N}\in\mathsf{E}^{N}$ represents
the event of the $N$-times repeated experiment in which $a_{1}$
occurs in the first repetition (of the original experiment), $a_{2}$
occurs in the second repetition, and so on. The union of two elements
\[
A=a_{1}\times\cdots\times a_{N},\quad B=b_{1}\times\cdots\times b_{N}\in\mathsf{E}^{N},
\]
represents the proposition ``$A$ or $B$'' of the $N$-times repeated
experiment. 
\begin{rem}
\label{un} It can be shown that, if $N>1$, such an union does not
belong to $\mathsf{E}^{N}$, unless, for instance, if: $a_{i}\subseteq b_{i}$,
for all $i=1,...,N$, or $b_{i}\subseteq a_{i}$, for all $i=1,...,N$.
In both cases, 
\[
\left(a_{1}\times\cdots\times a_{N}\right)\cup\left(b_{1}\times\cdots\times b_{N}\right)=\left(a_{1}\cup b_{1}\right)\times\cdots\times\left(a_{N}\cup b_{N}\right).
\]
\end{rem}

Analogously, for the ``indefinitely repeated experiment,'' the sample
space would be the set of sequences $S^{\mathbb{N}}$ (that can be
seen as $\infty$-tuples of elements of $S$) and the event space
can be taken as the subset of $2^{S^{\mathbb{N}}}$ formed out by
the cartesian products
\[
a_{1}\times\cdots\times a_{n}\times\cdots\subseteq S^{\mathbb{N}},\quad\textrm{with}\quad a_{n}\in\mathsf{E}\quad\textrm{for all\ensuremath{\quad n\in\mathbb{N}}},
\]
together with all its possible unions (inside $S^{\mathbb{N}}$).
Above cartesian products can also be written as the $\mathsf{E}$-valued
sequences
\[
\left(a_{1},\ldots,a_{n},\ldots\right)\in\mathsf{E}^{\mathbb{N}}.
\]
The event space so defined is precisely the complete Boolean subalgebra
of $2^{S^{\mathbb{N}}}$ generated by $\mathsf{E}^{\mathbb{N}}$,
which we shall indicate $\left\langle \mathsf{E}^{\mathbb{N}}\right\rangle $;
i.e. 
\begin{equation}
\left\langle \mathsf{E}^{\mathbb{N}}\right\rangle \coloneqq\left\{ {\textstyle \bigcup_{i\in I}}A_{i}\;:\;A_{i}\in\mathsf{E}^{\mathbb{N}}\right\} .\label{EBN}
\end{equation}
Of course, the sequence $\left(a_{1},\ldots,a_{n},\ldots\right)\in\mathsf{E}^{\mathbb{N}}$
represents the event of the indefinitely repeated experiment in which
$a_{n}$ occurs in the $n$-th repetition (of the original experiment),
for all $n\in\mathbb{N}$. 

\bigskip{}

Note that, since each $\left\langle \mathsf{E}^{N}\right\rangle $,
with $N\in\mathbb{N}$, as well as $\left\langle \mathsf{E}^{\mathbb{N}}\right\rangle $,
are complete Boolean algebras of sets, then they are completely distributive
lattices. The later means that, for all subsets $\left\{ A_{j}^{i}\;:\;j\in J_{i},\;i\in I\right\} $
of the algebra, the identity
\[
{\textstyle \bigcap_{i\in I}\bigcup_{j\in J_{i}}}A_{j}^{i}={\textstyle \bigcup_{f\in F}\bigcap_{i\in I}}A_{f\left(i\right)}^{i}
\]
(and its dual) holds, where $F$ is the set of (choice) functions
\[
F=\left\{ {\textstyle f:I\rightarrow\bigcup_{i\in I}J_{i}}\;/\;f\left(i\right)\in J_{i}\right\} .
\]

Below, and for latter convenience, we shall study the complementation
of each $\left\langle \mathsf{E}^{N}\right\rangle $ and $\left\langle \mathsf{E}^{\mathbb{N}}\right\rangle $,
in terms of the complementation of $\mathsf{E}$, taking into account
the complete distributivity property.

\subsection{An expression for the complement }

Given an element $a_{1}\times\cdots\times a_{N}\in\mathsf{E}^{N}$,
its complement (as a subset of $S^{N}$) can be written as
\[
{\textstyle \left(a_{1}\times\cdots\times a_{N}\right)^{c}=\bigcup_{k\in\left[N\right]}S\times\cdots\times a_{k}^{c}\times\cdots\times S}.
\]
And given a generic element of $\left\langle \mathsf{E}^{N}\right\rangle $,
say
\[
{\textstyle \bigcup_{i\in I}}a_{1}^{i}\times\cdots\times a_{N}^{i},
\]
with $I$ an index set, since
\[
\left({\textstyle \bigcup_{i\in I}}a_{1}^{i}\times\cdots\times a_{N}^{i}\right)^{c}={\textstyle \bigcap_{i\in I}}\left(a_{1}^{i}\times\cdots\times a_{N}^{i}\right)^{c},
\]
using the complete distributivity we can write 
\begin{equation}
\left({\textstyle \bigcup_{i\in I}}a_{1}^{i}\times\cdots\times a_{N}^{i}\right)^{c}={\textstyle \bigcup_{f\in F}}{\textstyle \bigcap_{i\in I}}S\times\cdots\times\left(a_{f\left(i\right)}^{i}\right)^{c}\times\cdots\times S,\label{fe}
\end{equation}
where 
\[
F=\left\{ f:I\rightarrow\left[N\right]\right\} .
\]
Also, since

\[
\begin{array}{lll}
{\textstyle \bigcap_{i\in I}}S\times\cdots\times\left(a_{f\left(i\right)}^{i}\right)^{c}\times\cdots\times S & = & \bigcap_{k\in\left[N\right]}\bigcap_{i\in f^{-1}\left(k\right)}S\times\cdots\times\left(a_{k}^{i}\right)^{c}\times\cdots\times S\\
\\
 & = & \left(\bigcap_{i\in f^{-1}\left(1\right)}\left(a_{1}^{i}\right)^{c}\right)\times\cdots\times\left(\bigcap_{i\in f^{-1}\left(N\right)}\left(a_{N}^{i}\right)^{c}\right),
\end{array}
\]
we can re-write Eq. \eqref{fe} as
\begin{equation}
{\textstyle \left({\textstyle \bigcup_{i\in I}}a_{1}^{i}\times\cdots\times a_{N}^{i}\right)^{c}={\textstyle \bigcup_{f\in F}}\left(\bigcap_{i\in f^{-1}\left(1\right)}\left(a_{1}^{i}\right)^{c}\right)\times\cdots\times\left(\bigcap_{i\in f^{-1}\left(N\right)}\left(a_{N}^{i}\right)^{c}\right).}\label{fe2}
\end{equation}

\bigskip{}

Analogously, given a sequence $\left(a_{1},\ldots,a_{n},\ldots\right)\in\mathsf{E}^{\mathbb{N}}$,
its complement can be written as
\[
{\textstyle \left(a_{1},\ldots,a_{n},\ldots\right)^{c}=\bigcup_{k\in\mathbb{N}}\left(S,\ldots,S,a_{k}^{c},S,\ldots\right)}.
\]
So, for a generic member 
\[
{\textstyle \bigcup_{i\in I}}\left(a_{1}^{i},\ldots,a_{n}^{i},\ldots\right)\in\left\langle \mathsf{E}^{\mathbb{N}}\right\rangle ,
\]
we have that
\begin{equation}
{\textstyle \left({\textstyle \bigcup_{i\in I}}\left(a_{1}^{i},\ldots,a_{n}^{i},\ldots\right)\right)^{c}={\textstyle \bigcup_{f\in F}}\left(\bigcap_{i\in f^{-1}\left(1\right)}\left(a_{1}^{i}\right)^{c},\ldots,\bigcap_{i\in f^{-1}\left(k\right)}\left(a_{k}^{i}\right)^{c},\ldots\right)},\label{fen}
\end{equation}
where $F=\left\{ f:I\rightarrow\mathbb{N}\right\} $. An equation
similar to \eqref{fen} will appear latter, in a more general context.

\section{General experiments}

\label{gexp} Now, suppose that we have an experiment for which its
event space $\mathsf{E}$ is not necessarily a classical logic, but
a general\textbf{ logic}: a complete orthocomplemented (\textit{non-necessarily
distributive}) lattice. Its order, joint, meet and orthocomplementation
will be denoted: $\subseteq$, $\cup$, $\cap$ and $^{c}$, while
its bottom and top elements as $\mathbf{0}$ and $\mathbf{1}$, respectively.
Note that $\mathbf{0}$ represent the absurd proposition (which is
always false) and $\mathbf{1}$ the trivial proposition (which is
always true).
\begin{example}
\label{eq} For a quantum system on a Hilbert space $\mathcal{H}$,
the event space is taken as $\mathsf{L}\left(\mathcal{H}\right)$:
the set of closed linear sub-spaces of $\mathcal{H}$ together with
the set inclusion. This is a complete orthocomplemented lattice, and
it is not distributive. More precisely, $\mathsf{L}\left(\mathcal{H}\right)$
is a complete (irreducible and atomic) ortho-modular lattice, also
called a \textbf{quantum logic}. (For more details, see for instance
Ref. \cite{kal}). The meet is given by the set intersection, and
the join of a family of subspaces is given by the smaller closed subspace
containing them. As expected, the orthocomplementation is given by
the orthogonal complement, and the bottom and top elements are $\mathbf{0}=\left\{ 0\right\} $
(the trivial subspace) and $\mathbf{1}=\mathcal{H}$, respectively.
Alternatively, $\mathsf{L}\left(\mathcal{H}\right)$ can be thought
of as the set of orthogonal projectors on $\mathcal{H}$. In such
a case, $\mathbf{0}$ is the null operator and $\mathbf{1}$ is the
identity operator.
\end{example}

If the experiment can be indefinitely repeated, we shall study, as
we did above in the classical case, how it should be like the event
space of the ``$N$-times repeated experiment'' and that of the
``indefinitely repeated experiment.'' 

\subsection{The lattices $\mathsf{E}^{\kappa}$}

Let us consider the sets $\mathsf{E}^{\kappa}$, with $\kappa=\left[N\right]=\left\{ 1,...,N\right\} $,
for some $N\in\mathbb{N}$, or $\kappa=\mathbb{N}$. For $\kappa=\left[N\right]$
we have the set of $N$-tuples 
\[
\left(a_{1},\ldots,a_{N}\right)\in\mathsf{E}^{\left[N\right]}=\mathsf{E}^{N}=\underbrace{\mathsf{E}\times\cdots\times\mathsf{E}}_{N\textrm{-times}},
\]
and for $\kappa=\mathbb{N}$ we have the set of sequences
\[
\left(a_{1},\ldots,a_{n},\ldots\right)\in\mathsf{E}^{\mathbb{N}}.
\]
To unify notation, we shall denote $\left(a_{1},\ldots,a_{n},\ldots\right)$
the elements of $\mathsf{E}^{N}$ as well as $\mathsf{E}^{\mathbb{N}}$.
Of course, such elements represent the events (or propositions) of
the ``$\kappa$-times repeated experiment'' in which $a_{n}$ occurs
(or is true) in the $n$-th repetition of the original experiment.

Let us identify among themselves all the elements of $\mathsf{E}^{\kappa}$
with unless some component equal to $\mathbf{0}$. It is clear that
all of them will be identified with
\[
\hat{\mathbf{0}}\coloneqq\begin{cases}
\underbrace{\left(\mathbf{0},\ldots,\mathbf{0}\right)}_{N\textrm{-times}}, & \quad\kappa=\left[N\right],\\
\left(\mathbf{0},\ldots,\mathbf{0},\ldots\right), & \quad\kappa=\mathbb{N}.
\end{cases}
\]
We must do that because, if some component $a_{n}$ of $\left(a_{1},\ldots,a_{n},\ldots\right)\in\mathsf{E}^{\kappa}$
is the absurd proposition of the original experiment (i.e. $a_{n}=\mathbf{0}$),
which is always false, then the proposition represented by $\left(a_{1},\ldots,a_{n},\ldots\right)$
must be always false too, so it must be the absurd proposition of
the $\kappa$-times repeated experiment. This identification is immediate
in the classical case (see Remark \ref{vac}).

It is easy to show that $\mathsf{E}^{\kappa}$, for $\kappa=\left[N\right]$
and $\kappa=\mathbb{N}$, is a complete lattice with respect to the
component-wise order, joint and meet, which we shall also denote $\subseteq$,
$\cup$ and $\cap$. Its bottom is $\hat{\mathbf{0}}$ and its top
is

\[
\hat{\mathbf{1}}\coloneqq\begin{cases}
\underbrace{\left(\mathbf{1},\ldots,\mathbf{1}\right)}_{N\textrm{-times}}, & \quad\kappa=\left[N\right],\\
\left(\mathbf{1},\ldots,\mathbf{1},\ldots\right), & \quad\kappa=\mathbb{N}.
\end{cases}
\]

\subsection{The free terms of $\mathsf{E}^{\kappa}$}

Inspired in Section \ref{cle} (see Eqs. \eqref{EN} and \eqref{EBN}),
consider the set of \textit{free terms}
\begin{equation}
\mathsf{T}\left(\mathsf{E}^{\kappa}\right)\coloneqq\left\{ {\textstyle \bigsqcup_{i\in I}}A^{i}\;:\;\left|I\right|\leq2^{\left|\mathsf{E}^{\kappa}\right|},\quad A^{i}\in\mathsf{E}^{\kappa}\right\} .\label{fts}
\end{equation}
By $\bigsqcup_{i\in I}A^{i}$ we are denoting the function 
\[
f:i\in I\longmapsto A^{i}\in\mathsf{E}^{\kappa}.
\]
We shall say that $\bigsqcup_{i\in I}A^{i}$ is a \textit{term with
letters} $A^{i}$. The upper bond $2^{\left|\mathsf{E}^{\kappa}\right|}$
for the cardinality of letters in each term will be clear later (see
Remark \ref{recar}).\bigskip{}

Let us introduce some notation. Given a second term ${\textstyle \bigsqcup_{j\in J}}B^{j}$,
with related function $g:j\in J\longmapsto B^{j}\in\mathsf{E}^{\kappa}$,
we shall denote by
\begin{equation}
\left({\textstyle \bigsqcup_{i\in I}}A^{i}\right)\sqcup\left({\textstyle \bigsqcup_{j\in J}}B^{j}\right)\label{dn}
\end{equation}
the term defined by the function $h:I\vee J\rightarrow\mathsf{E}^{\mathbb{\kappa}}$
(on the disjoint union $I\vee J$) such that
\[
\left.h\right|_{I}=f\quad\textrm{and}\quad\left.h\right|_{J}=g.
\]
Note that, since $I\vee J=J\vee I$,
\begin{equation}
\left({\textstyle \bigsqcup_{i\in I}}A^{i}\right)\sqcup\left({\textstyle \bigsqcup_{j\in J}}B^{j}\right)=\left({\textstyle \bigsqcup_{j\in J}}B^{j}\right)\sqcup\left({\textstyle \bigsqcup_{i\in I}}A^{i}\right).\label{conm}
\end{equation}
In addition, given a third term ${\textstyle \bigsqcup_{k\in K}}C^{k}$,
since $\left(I\vee J\right)\vee K=I\vee\left(J\vee K\right)$, then
\begin{equation}
\left(\left({\textstyle \bigsqcup_{i\in I}}A^{i}\right)\sqcup\left({\textstyle \bigsqcup_{j\in J}}B^{j}\right)\right)\sqcup\left({\textstyle \bigsqcup_{k\in K}}C^{k}\right)=\left({\textstyle \bigsqcup_{i\in I}}A^{i}\right)\sqcup\left(\left({\textstyle \bigsqcup_{j\in J}}B^{j}\right)\sqcup\left({\textstyle \bigsqcup_{k\in K}}C^{k}\right)\right).\label{ass}
\end{equation}
In general, given a family of terms
\[
{\textstyle \bigsqcup_{i\in I_{j}}}A_{j}^{i},\quad j\in J,\quad\left|J\right|\leq2^{\left|\mathsf{E}^{\kappa}\right|},
\]
we shall denote ${\textstyle \bigsqcup_{j\in J}}\left({\textstyle \bigsqcup_{i\in I_{j}}}A_{j}^{i}\right)$
the term related to the function 
\[
h:\left(i,j\right)\in K\longmapsto A_{j}^{i}\in\mathsf{E}^{\kappa},
\]
with $K=\left\{ \left(i,j\right):j\in J\textrm{ and }i\in I_{j}\right\} $.
That is to say,
\begin{equation}
{\textstyle \bigsqcup_{j\in J}}\left({\textstyle \bigsqcup_{i\in I_{j}}}A_{j}^{i}\right)\coloneqq{\textstyle \bigsqcup_{\left(i,j\right)\in K}}A_{j}^{i}.\label{ji}
\end{equation}
Note that, since $\left|I_{j}\right|\leq2^{\left|\mathsf{E}^{\mathbb{\kappa}}\right|}$
for all $j$, then
\begin{equation}
\left|K\right|\leq\left|J\right|\cdot2^{\left|\mathsf{E}^{\mathbb{\kappa}}\right|}=\max\left\{ \left|J\right|,2^{\left|\mathsf{E}^{\mathbb{\kappa}}\right|}\right\} =2^{\left|\mathsf{E}^{\mathbb{\kappa}}\right|},\label{jiji}
\end{equation}
hence, the cardinality of $K$ is compatible with the definition of
$\mathsf{T}\left(\mathsf{E}^{\kappa}\right)$. If $I_{j}=I$ for all
$j$, then $K=I\times J$. \bigskip{}

For $I=\emptyset$, we have the \textit{empty} term, that we shall
also indicate $\emptyset$. Following above notation, it is clear
that
\begin{equation}
\left({\textstyle \bigsqcup_{i\in I}}A^{i}\right)\sqcup\emptyset=\emptyset\sqcup\left({\textstyle \bigsqcup_{i\in I}}A^{i}\right)={\textstyle \bigsqcup_{i\in I}}A^{i}.\label{avva}
\end{equation}

\subsection{\label{tap} Terms and propositions}

Given 
\[
A=\left(a_{1},\ldots,a_{n},\ldots\right)\in\mathsf{E}^{\kappa}\quad\textrm{and}\quad B=\left(b_{1},\ldots,b_{n},\ldots\right)\in\mathsf{E}^{\kappa},
\]
which represent two particular propositions of the $\kappa$-times
repeated experiment, we want $A\sqcup B$ to represent (or unless
to be related to) the proposition ``$A$ or $B$'' of the same experiment.
In particular, if $A=B$, the element $A\sqcup A$ would represent
the proposition ``$A$ or $A$'', which we should identify with
$A$; i.e. we should identify the elements $A\sqcup A$ and $A$.
Moreover, if for instance $A\subseteq B$ (component-wise order),
we should identify $A\sqcup B$ with $B$. Let us try to justify this.
If $a_{i}\subseteq b_{i}$ for all $i\in\kappa$, the fact that $a_{i}$
is true implies that $b_{i}$ is true too, for all $i$. So, if the
proposition ``$A$ or $B$'' is true, then $a_{i}$ is true for
all $i$ or $b_{i}$ is true for all $i$, and, according to the last
sentence, $b_{i}$ is true for all $i$. Consequently $B$ is true.
On the other hand, we would like that every element ${\textstyle \bigsqcup_{i\in I}}A^{i}$
such that $A^{i}=\hat{\mathbf{0}}$, for all $i\in I$, and also the
empty term, represent the absurd proposition. All this drives us to
consider the following equivalence relation on the set $\mathsf{T}\left(\mathsf{E}^{\kappa}\right)$:
\[
{\textstyle \bigsqcup_{i\in I}}A^{i}\backsim{\textstyle \bigsqcup_{j\in J}}B^{j}
\]
if and only if:
\begin{itemize}
\item for all $i\in I$,
\begin{itemize}
\item $\left(J\neq\emptyset\right)$ there exists $j\in J$ such that $A^{i}\subseteq B^{j}$;
\item $\left(J=\emptyset\right)$ $A^{i}=\hat{\mathbf{0}}$;
\end{itemize}
\item for all $j\in J$,
\begin{itemize}
\item $\left(I\neq\emptyset\right)$ there exists $i\in I$ such that $B^{j}\subseteq A^{i}$;
\item $\left(I=\emptyset\right)$ $B^{j}=\hat{\mathbf{0}}$.
\end{itemize}
\end{itemize}
It is clear that relation $\backsim$ is reflexive and symmetric.
The proof of transitivity is left to the reader. One of the classes
of $\backsim$ is formed out by the empty term $\emptyset$ and every
element of the form ${\textstyle \bigsqcup_{i\in I}}\hat{\mathbf{0}}$
(with $I\neq\emptyset$). In particular, we have that
\[
\emptyset\backsim\hat{\mathbf{0}}.
\]
The other classes, of course, are given by non-empty terms. 
\begin{rem}
\label{net} Note that, for every class, we can always take a non-empty
term as its representative, and we shall do it from now on. 
\end{rem}

To study the classes different from $\left[\hat{\mathbf{0}}\right]$,
we can use the following characterization: two elements ${\textstyle \bigsqcup_{i\in I}}A^{i}$
and ${\textstyle \bigsqcup_{j\in J}}B^{j}$, with $I,J\neq\emptyset$,
are in the same class if and only if:
\begin{enumerate}
\item for all $i\in I$, there exists $j\in J$ such that $A^{i}\subseteq B^{j}$;
\item for all $j\in J$, there exists $i\in I$ such that $B^{j}\subseteq A^{i}$.
\end{enumerate}
As a direct consequence of above characterization, given $A,B\in\mathsf{E}^{\kappa}$,
we have that
\begin{equation}
A\backsim B\;\Longleftrightarrow\;A=B.\label{abeq}
\end{equation}
Also, it is easy to show that $A\sqcup A\backsim A$ for all $A\in\mathsf{E}^{\kappa}$,
and more generally, given $B\in\mathsf{E}^{\kappa}$ such that $A\subseteq B$,
we have that $A\sqcup B\backsim B$. In terms of classes, we can say
that 
\[
\left[A\sqcup A\right]=\left[A\right]
\]
and
\[
\left[A\sqcup B\right]=\left[B\right]\quad\textrm{if}\quad A\subseteq B.
\]
More generally, we have the next result.
\begin{prop}
\label{basaa} Given ${\textstyle \bigsqcup_{i\in I}}A^{i}\in\mathsf{T}\left(\mathsf{E}^{\kappa}\right)$,
with $I\neq\emptyset$,
\begin{equation}
\left[{\textstyle \bigsqcup_{i\in I'}}A^{i}\sqcup{\textstyle \bigsqcup_{i\in I}}A^{i}\right]=\left[{\textstyle \bigsqcup_{i\in I}}A^{i}\right],\quad\forall I'\subseteq I,\label{aaa}
\end{equation}
and given in addition ${\textstyle \bigsqcup_{j\in J}}B^{j}\in\mathsf{T}\left(\mathsf{E}^{\kappa}\right)$,
also with $J\neq\emptyset$, we have that
\begin{equation}
\left[{\textstyle \bigsqcup_{i\in I}}A^{i}\sqcup{\textstyle \bigsqcup_{j\in J}}B^{j}\right]=\left[{\textstyle \bigsqcup_{j\in J}}B^{j}\right]\label{abb}
\end{equation}
if and only if for all $i\in I$ there exists $j\in J$ such that
$A^{i}\subseteq B^{j}$.
\end{prop}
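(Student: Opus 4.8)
The plan is to reduce both equalities to direct applications of the two-condition characterization of $\backsim$ on non-empty terms; the only substantive ingredient is a careful bookkeeping of \emph{which} letters occur in a combined term. Before treating either equation I would record the single structural fact I need about $\sqcup$: by its definition through the disjoint union $I\vee J$, the family of letters of $\bigsqcup_{i\in I}A^{i}\sqcup\bigsqcup_{j\in J}B^{j}$ is exactly the disjointly-indexed concatenation of the families $\{A^{i}\}_{i\in I}$ and $\{B^{j}\}_{j\in J}$. Since $I\neq\emptyset$ throughout, every combined term appearing in the statement is non-empty, so the characterization via conditions (1) and (2) is applicable to it and to each one-sided term, and I need not enter the $[\hat{\mathbf{0}}]$ case.

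For \eqref{aaa} I would first dispatch $I'=\emptyset$, where the claim is immediate from \eqref{avva}. For $I'\neq\emptyset$ set $T_{L}=\bigsqcup_{i\in I'}A^{i}\sqcup\bigsqcup_{i\in I}A^{i}$ and $T_{R}=\bigsqcup_{i\in I}A^{i}$. Condition (1) holds because, using $I'\subseteq I$, every letter of $T_{L}$ is some $A^{i}$ with $i\in I$, hence equals a letter of $T_{R}$ and is contained in it; condition (2) holds because each letter $A^{i}$ of $T_{R}$ reappears in the $I$-block of $T_{L}$. Thus $T_{L}\backsim T_{R}$, which is \eqref{aaa}.

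For \eqref{abb} I would prove the two implications from the same characterization, writing $T_{L}=\bigsqcup_{i\in I}A^{i}\sqcup\bigsqcup_{j\in J}B^{j}$ and $T_{R}=\bigsqcup_{j\in J}B^{j}$. Condition (2) for the pair $(T_{L},T_{R})$ is automatic, since each $B^{j}$ of $T_{R}$ reappears in the $J$-block of $T_{L}$; and inside condition (1) the $B^{j}$-letters of $T_{L}$ are dominated by themselves in $T_{R}$. Hence the entire content of $T_{L}\backsim T_{R}$ is the remaining part of condition (1): that each $A^{i}$ ($i\in I$) is contained in some $B^{j}$ ($j\in J$). For the forward implication I extract precisely this sub-assertion of condition (1); for the backward implication I assume it and verify that conditions (1) and (2) then both hold, yielding $T_{L}\backsim T_{R}$.

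I do not expect a serious obstacle, as the proposition is essentially a repackaging of the definition of $\backsim$. The one point demanding care is the disjoint-union semantics of $\sqcup$: a given value may occur as a letter with ``multiplicity'' (for instance an $A^{i}$ with $i\in I'\cap I$ sits in both the $I'$-block and the $I$-block), so I must argue at the level of the indexed family of letters rather than the set of their values, and I must invoke the hypotheses $I,J\neq\emptyset$ (whence $I\vee J\neq\emptyset$) to be entitled to use the characterization rather than the $[\hat{\mathbf{0}}]$ class.
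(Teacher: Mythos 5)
Your proposal is correct and follows essentially the same route as the paper's proof: the empty case of \eqref{aaa} via \eqref{avva} and reflexivity, and both remaining claims by direct verification of the two-condition characterization of $\backsim$ on non-empty terms. Your explicit bookkeeping of the letters of the disjoint-union term and the observation that condition (2) and half of condition (1) are automatic in \eqref{abb} merely spell out what the paper leaves to the reader.
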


\begin{proof}
If $I'=\emptyset$, since 
\[
{\textstyle \bigsqcup_{i\in I'}}A^{i}\sqcup{\textstyle \bigsqcup_{i\in I}}A^{i}=\emptyset\sqcup{\textstyle \bigsqcup_{i\in I}}A^{i}={\textstyle \bigsqcup_{i\in I}}A^{i}
\]
(see \eqref{avva}), Eq. \eqref{aaa} follows from reflexivity of
$\backsim$; and if $I'\neq\emptyset$, it follows from the fact that
$A^{i}\subseteq A^{i}$, for all $i\in I$, and the points $1$ and
$2$ above. To prove \eqref{abb}, we can use again the mentioned
points.
\end{proof}
\begin{rem}
\label{ecar} Because of Eq. \eqref{aaa}, it is clear that if ${\textstyle \bigsqcup_{i\in I}}A^{i}\in\mathsf{T}\left(\mathsf{E}^{\kappa}\right)$
is defined by the function $f$, then its class $\left[{\textstyle \bigsqcup_{i\in I}}A^{i}\right]$
only depends on $\mathsf{range}\left(f\right)$ (and not on the multiplicities
$\left|f^{-1}\left(l\right)\right|$, with $l\in\mathsf{range}\left(f\right)$).
In other words, each class $\left[{\textstyle \bigsqcup_{i\in I}}A^{i}\right]$
can be described in terms of a subset of $\mathsf{E}^{\kappa}$: the
different letters of the representing term. Thus,
\[
\left|\left.\mathsf{T}\left(\mathsf{E}^{\kappa}\right)\right/\backsim\right|\leq2^{\left|\mathsf{E}^{\kappa}\right|}.
\]
\end{rem}

\section{The completely distributive lattice $\mathsf{D}_{\kappa}\left(\mathsf{E}\right)$}

\label{tcdis} Let us define
\[
\mathsf{D}_{\kappa}\left(\mathsf{E}\right)\coloneqq\left.\mathsf{T}\left(\mathsf{E}^{\kappa}\right)\right/\backsim.
\]
In this section, we shall show that a natural completely distributive
lattice structure can be defined on $\mathsf{D}_{\kappa}\left(\mathsf{E}\right)$.
Let us begin with the join operation.

\subsection{\label{jop} The join operation and the related order}

Recall that we shall represent the classes of $\backsim$, i.e. the
elements of $\mathsf{D}_{\kappa}\left(\mathsf{E}\right)$, by non-empty
terms (see Remark \ref{net}). So, when we talk about ``the class
$\left[{\textstyle \bigsqcup_{i\in I}}A^{i}\right]$,'' it will be
implicit that $I\neq\emptyset$.
\begin{prop}
The formula 
\[
\left[{\textstyle \bigsqcup_{i\in I}}A^{i}\right]\vee\left[{\textstyle \bigsqcup_{j\in J}}B^{j}\right]\coloneqq\left[{\textstyle \bigsqcup_{i\in I}}A^{i}\sqcup{\textstyle \bigsqcup_{j\in J}}B^{j}\right]
\]
defines a join operation $\vee$ on $\mathsf{D}_{\kappa}\left(\mathsf{E}\right)$.
\end{prop}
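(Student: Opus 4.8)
The plan is to establish two things: that the formula is independent of the chosen representatives (\emph{well-definedness}), and that the resulting operation satisfies the semilattice axioms, so that it genuinely deserves to be called a join. I expect well-definedness to be the main obstacle; the algebraic identities will then follow almost at once from the term manipulations already recorded in the previous subsections.

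First I would prove well-definedness. Suppose ${\textstyle \bigsqcup_{i\in I}}A^{i}\backsim{\textstyle \bigsqcup_{i'\in I'}}C^{i'}$ and ${\textstyle \bigsqcup_{j\in J}}B^{j}\backsim{\textstyle \bigsqcup_{j'\in J'}}D^{j'}$, all index sets being non-empty (Remark \ref{net}). By the definition of $\sqcup$, the term ${\textstyle \bigsqcup_{i\in I}}A^{i}\sqcup{\textstyle \bigsqcup_{j\in J}}B^{j}$ is the function on $I\vee J$ whose letters are precisely the $A^{i}$ together with the $B^{j}$, and likewise the primed term on $I'\vee J'$ has letters the $C^{i'}$ together with the $D^{j'}$. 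I would then verify the two conditions of the characterization of $\backsim$ stated just before Proposition \ref{basaa}. For a letter $A^{i}$ of the first term, the hypothesis ${\textstyle \bigsqcup_{i\in I}}A^{i}\backsim{\textstyle \bigsqcup_{i'\in I'}}C^{i'}$ yields some $i'\in I'$ with $A^{i}\subseteq C^{i'}$, and $C^{i'}$ is a letter of the primed term; for a letter $B^{j}$ one uses the second hypothesis to find $j'\in J'$ with $B^{j}\subseteq D^{j'}$. The reverse domination conditions are obtained in the same way, now using condition $2$ of each hypothesis. This gives ${\textstyle \bigsqcup_{i\in I}}A^{i}\sqcup{\textstyle \bigsqcup_{j\in J}}B^{j}\backsim{\textstyle \bigsqcup_{i'\in I'}}C^{i'}\sqcup{\textstyle \bigsqcup_{j'\in J'}}D^{j'}$, so $\vee$ does not depend on the representatives. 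I would also remark that the disjoint union keeps the number of letters within the bound $2^{\left|\mathsf{E}^{\kappa}\right|}$ (cf. \eqref{jiji}), so the result is again an element of $\mathsf{T}\left(\mathsf{E}^{\kappa}\right)$ and $\vee$ is a genuine binary operation on $\mathsf{D}_{\kappa}\left(\mathsf{E}\right)$.

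It then remains to check that $\vee$ is commutative, associative and idempotent, which is where the earlier identities do all the work. Writing $T,T',T''$ for generic non-empty terms, commutativity $\left[T\right]\vee\left[T'\right]=\left[T'\right]\vee\left[T\right]$ is immediate from \eqref{conm}, associativity from \eqref{ass}, and idempotency $\left[T\right]\vee\left[T\right]=\left[T\right]$ is exactly the special case $I'=I$ of Eq. \eqref{aaa} in Proposition \ref{basaa}. These three properties are precisely what is needed for $\vee$ to be the join of a semilattice, the related order being $x\leq y\iff x\vee y=y$. The only delicate point is thus well-definedness, and even there the subtlety is purely bookkeeping: each letter of $T\sqcup T'$ is a letter of $T$ or of $T'$, and one applies the matching hypothesis to it; the class $\left[\hat{\mathbf{0}}\right]$ causes no trouble because every class is represented by a non-empty term.
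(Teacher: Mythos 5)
Your proof is correct and follows essentially the same route as the paper: well-definedness via the two-point characterization of $\backsim$ (you replace both representatives at once where the paper replaces one and appeals to symmetry), and then commutativity, associativity and idempotency from \eqref{conm}, \eqref{ass} and \eqref{aaa} respectively. The added remark on the cardinality bound is a sensible touch the paper defers to Remark \ref{recar}, but the substance of the argument is the same.
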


\begin{proof}
Let us first see that $\vee$ is well-defined. Suppose that 
\[
\left[{\textstyle \bigsqcup_{i\in I}}A^{i}\right]=\left[{\textstyle \bigsqcup_{k\in K}}C^{k}\right].
\]
According to the points $1$ and $2$ above, for all $i\in I$ there
exists $k\in K$ such that $A^{i}\subseteq C^{k}$, and vice versa.
As a consequence, the terms
\[
{\textstyle \bigsqcup_{i\in I}}A^{i}\sqcup{\textstyle \bigsqcup_{j\in J}}B^{j}\quad\textrm{and}\quad{\textstyle \bigsqcup_{k\in K}}C^{k}\sqcup{\textstyle \bigsqcup_{j\in J}}B^{j}
\]
also satisfy the points $1$ and $2$, and consequently
\[
\left[{\textstyle \bigsqcup_{i\in I}}A^{i}\sqcup{\textstyle \bigsqcup_{j\in J}}B^{j}\right]=\left[{\textstyle \bigsqcup_{k\in K}}C^{k}\sqcup{\textstyle \bigsqcup_{j\in J}}B^{j}\right].
\]
All this ensures that $\vee$ is well-defined. Commutativity and associativity
of $\vee$ is a consequence of \eqref{conm} and \eqref{ass}, respectively,
while idempotency is a consequence of \eqref{aaa} for $I=I'$. 
\end{proof}
From $\vee$ we can define an order $\leq$ on $\mathsf{D}_{\kappa}\left(\mathsf{E}\right)$:
\[
\left[{\textstyle \bigsqcup_{i\in I}}A^{i}\right]\leq\left[{\textstyle \bigsqcup_{j\in J}}B^{j}\right]\quad\Longleftrightarrow\quad\left[{\textstyle \bigsqcup_{i\in I}}A^{i}\right]\vee\left[{\textstyle \bigsqcup_{j\in J}}B^{j}\right]=\left[{\textstyle \bigsqcup_{j\in J}}B^{j}\right],
\]
or equivalently
\begin{equation}
\left[{\textstyle \bigsqcup_{i\in I}}A^{i}\right]\leq\left[{\textstyle \bigsqcup_{j\in J}}B^{j}\right]\quad\Longleftrightarrow\quad\left[{\textstyle \bigsqcup_{i\in I}}A^{i}\sqcup{\textstyle \bigsqcup_{j\in J}}B^{j}\right]=\left[{\textstyle \bigsqcup_{j\in J}}B^{j}\right].\label{ord}
\end{equation}
With respect to this order, we have that 
\[
\left[\hat{\mathbf{0}}\right]\leq\left[{\textstyle \bigsqcup_{i\in I}}A^{i}\right]\leq\left[\hat{\mathbf{1}}\right],\quad\forall\left[{\textstyle \bigsqcup_{i\in I}}A^{i}\right]\in\mathsf{D}_{\kappa}\left(\mathsf{E}\right).
\]
In fact, it is easy to show that
\[
\left[\hat{\mathbf{0}}\sqcup{\textstyle \bigsqcup_{i\in I}}A^{i}\right]=\left[{\textstyle \bigsqcup_{i\in I}}A^{i}\right]\quad\textrm{and}\quad\left[{\textstyle \bigsqcup_{i\in I}}A^{i}\sqcup\hat{\mathbf{1}}\right]=\left[\hat{\mathbf{1}}\right].
\]
So, we have a bounded join semi-lattice $\left(\mathsf{D}_{\kappa}\left(\mathsf{E}\right),\leq,\vee,\left[\hat{\mathbf{0}}\right],\left[\hat{\mathbf{1}}\right]\right)$.

\bigskip{}

As expected, the order $\leq$ of $\mathsf{D}_{\kappa}\left(\mathsf{E}\right)$
and the (component-wise) order $\subseteq$ of $\mathsf{E}^{\kappa}$
are intimately related, as shown in the next result.
\begin{prop}
The function 
\begin{equation}
\varphi_{\kappa}:A\in\mathsf{E}^{\kappa}\mapsto\left[A\right]\in\mathsf{D}_{\kappa}\left(\mathsf{E}\right)\label{emb}
\end{equation}
defines an order embedding $\left(\mathsf{E}^{\kappa},\subseteq\right)\hookrightarrow\left(\mathsf{D}_{\kappa}\left(\mathsf{E}\right),\leq\right)$. 
\end{prop}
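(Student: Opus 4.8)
The goal is to establish the biconditional $A\subseteq B\iff[A]\leq[B]$ for all $A,B\in\mathsf{E}^{\kappa}$; this says exactly that $\varphi_{\kappa}$ both preserves and reflects the order, and since $\left(\mathsf{D}_{\kappa}\left(\mathsf{E}\right),\leq\right)$ is a poset, injectivity of $\varphi_{\kappa}$ then follows automatically from antisymmetry (equivalently, it is already recorded in \eqref{abeq}). The map is evidently well defined, since each $A\in\mathsf{E}^{\kappa}$ determines a single class $[A]$, with no choice of representative to worry about on the domain side.

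The plan is to reduce everything to Proposition \ref{basaa}. First I would unwind the definition \eqref{ord} of the order: regarding $A$ and $B$ as one-letter terms, we have $[A]\leq[B]$ if and only if $\left[A\sqcup B\right]=[B]$. The key observation is that a single element $A\in\mathsf{E}^{\kappa}$ is nothing but a term whose index set $I=\{*\}$ is a singleton and whose unique letter is $A$, and likewise for $B$. Proposition \ref{basaa}, in the form \eqref{abb}, therefore applies verbatim with both index sets singletons and yields that $\left[A\sqcup B\right]=[B]$ holds if and only if for every $i$ in the singleton indexing $A$ there exists $j$ in the singleton indexing $B$ with $A\subseteq B$, that is, if and only if $A\subseteq B$. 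Chaining the two equivalences gives $[A]\leq[B]\Longleftrightarrow A\subseteq B$, which is precisely the order-embedding property.

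I do not expect a genuine obstacle here: the substantive work was carried out in Proposition \ref{basaa}, and the present statement is merely its specialization to one-letter terms. The only point deserving a moment's care is the degenerate case in which a letter equals $\hat{\mathbf{0}}$, so that its class is $\left[\hat{\mathbf{0}}\right]$. Here one checks directly that $\hat{\mathbf{0}}\subseteq B$ holds component-wise for every $B\in\mathsf{E}^{\kappa}$, matching $\left[\hat{\mathbf{0}}\right]\leq[B]$, and that representing $\left[\hat{\mathbf{0}}\right]$ by the non-empty one-letter term $\hat{\mathbf{0}}$ keeps us inside the hypotheses $I,J\neq\emptyset$ of Proposition \ref{basaa}; thus no separate argument is required and the biconditional holds without exception.
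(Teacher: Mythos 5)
Your argument is correct and is essentially the paper's own proof: both reduce the claim to Proposition \ref{basaa} (Eq. \eqref{abb}) applied to one-letter terms, obtaining $[A]\leq[B]\Leftrightarrow[A\sqcup B]=[B]\Leftrightarrow A\subseteq B$, with injectivity following from antisymmetry (Eq. \eqref{abeq}). Your extra check of the $\hat{\mathbf{0}}$ case is harmless but not needed beyond noting that one-letter terms are non-empty.
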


\begin{proof}
Given $A,B\in\mathsf{E}^{\kappa}$, we know from Proposition \ref{basaa}
(see Eq. \eqref{abb}) that $\left[A\sqcup B\right]=\left[B\right]$
if and only if $A\subseteq B$. But $\left[A\sqcup B\right]=\left[B\right]$
means exactly that $\left[A\right]\leq\left[B\right]$ (see Eq. \eqref{ord}),
what ends our proof.
\end{proof}
The injectivity of $\varphi_{\kappa}$ ensures that, given $A,B\in\mathsf{E}^{\kappa}$,
\[
\left[A\right]=\left[B\right]\;\Longleftrightarrow\;A=B,
\]
which is precisely what Eq. \eqref{abeq} says.

\subsection{\label{compss} Completeness}

Consider a non-empty subset
\[
S=\left\{ \left[{\textstyle \bigsqcup_{i\in I_{j}}}A_{j}^{i}\right]:j\in J\right\} \subseteq\mathsf{D}_{\kappa}\left(\mathsf{E}\right).
\]
Note that $J\neq\emptyset$. (According to our conventions, we shall
assume that $I_{j}\neq\emptyset$ for all $j$). Let us show that
$\bigvee S$ exists and it is given by (recall \eqref{ji})
\[
\bigvee S=\left[{\textstyle \bigsqcup_{j\in J}}\left({\textstyle \bigsqcup_{i\in I_{j}}}A_{j}^{i}\right)\right].
\]

\begin{rem}
\label{recar} Note that $\left|I_{j}\right|\leq2^{\left|\mathsf{E^{\kappa}}\right|}$,
for all $j\in J$ (by the very definition of $\mathsf{T}\left(\mathsf{E^{\kappa}}\right)$),
and (see Remark \ref{ecar})
\[
\left|J\right|=\left|S\right|\leq\left|\mathsf{D}_{\kappa}\left(\mathsf{E}\right)\right|\leq2^{\left|\mathsf{E^{\kappa}}\right|}.
\]
Then, according to \eqref{jiji}, ${\textstyle \bigsqcup_{j\in J}}\left({\textstyle \bigsqcup_{i\in I_{j}}}A_{j}^{i}\right)$
is effectively an element of $\mathsf{T}\left(\mathsf{E^{\kappa}}\right)$.
This is why, in the definition of $\mathsf{T}\left(\mathsf{E}^{\kappa}\right)$,
we did not consider terms with more than $2^{\left|\mathsf{E}^{\kappa}\right|}$
letters.
\end{rem}

For simplicity, let us write $\mathbb{A}_{j}\coloneqq{\textstyle \bigsqcup_{i\in I_{j}}}A_{j}^{i}$.
It is clear that (according to \eqref{aaa})
\[
\left[\mathbb{A}_{l}\sqcup\left({\textstyle \bigsqcup_{j\in J}}\mathbb{A}_{j}\right)\right]=\left[{\textstyle \bigsqcup_{j\in J}}\mathbb{A}_{j}\right],\quad\forall l\in J,
\]
and consequently
\[
\left[\mathbb{A}_{l}\right]\leq\left[{\textstyle \bigsqcup_{j\in J}}\mathbb{A}_{j}\right],\quad\forall l\in J,
\]
i.e. $\left[{\textstyle \bigsqcup_{j\in J}}\mathbb{A}_{j}\right]$
is an upper bond of $S$. If for some 
\[
{\textstyle \mathbb{C}=\bigsqcup_{k\in K}C^{k}\in\mathsf{T}\left(\mathsf{E^{\kappa}}\right)},
\]
with $K\neq\emptyset$, we have that
\[
\left[\mathbb{A}_{j}\right]\leq\left[\mathbb{C}\right],\quad\forall j\in J,
\]
or equivalently
\[
\left[\mathbb{A}_{j}\sqcup\mathbb{C}\right]=\left[\mathbb{C}\right],\quad\forall j\in J,
\]
then, according to Proposition \ref{basaa} (see Eq. \eqref{abb}),
for all $j\in J$ and all $i\in I_{j}$ there exists $k\in K$ such
that $A_{j}^{i}\subseteq C^{k}$, what implies that
\[
\left[\left({\textstyle \bigsqcup_{j\in J}}\mathbb{A}_{j}\right)\sqcup\mathbb{C}\right]=\left[\mathbb{C}\right],
\]
i.e.
\[
\left[{\textstyle \bigsqcup_{j\in J}}\mathbb{A}_{j}\right]\leq\left[\mathbb{C}\right].
\]
This says precisely that $\left[{\textstyle \bigsqcup_{j\in J}}\mathbb{A}_{j}\right]$
is the least upper bound of $S$, as we wanted to show. Then, the
join semi-lattice $\left(\mathsf{D}_{\kappa}\left(\mathsf{E}\right),\vee\right)$
is complete and bounded below by $\left[\hat{\mathbf{0}}\right]$.
As it is well-known, this implies that, given any non-empty subset
$S\subseteq\mathsf{D}_{\kappa}\left(\mathsf{E}\right)$, the greatest
lower bound $\bigwedge S$ exists and is given by
\[
\bigwedge S=\bigvee S^{l},
\]
where $S^{l}\subseteq\mathsf{D}_{\kappa}\left(\mathsf{E}\right)$
is the (non-empty) subset
\begin{equation}
S^{l}\coloneqq\left\{ x\in\mathsf{D}_{\kappa}\left(\mathsf{E}\right)\;:\;x\leq s,\;\forall s\in S\right\} .\label{sl-1}
\end{equation}
In particular, this defines a meet operation $\wedge$ on $\mathsf{D}_{\kappa}\left(\mathsf{E}\right)$. 

\bigskip{}

As usual, we shall take
\begin{equation}
\bigvee\emptyset=\left[\hat{\mathbf{0}}\right]\quad\textrm{and}\quad\bigwedge\emptyset=\left[\hat{\mathbf{1}}\right].\label{sup0}
\end{equation}

Concluding, we have proved the next theorem.
\begin{thm}
$\mathsf{D}_{\kappa}\left(\mathsf{E}\right)$ with the order $\leq$
(see Eq. \eqref{ord}) gives rise to the complete lattice
\[
\left(\mathsf{D}_{\kappa}\left(\mathsf{E}\right),\leq,\vee,\wedge,\left[\hat{\mathbf{0}}\right],\left[\hat{\mathbf{1}}\right]\right).
\]
\end{thm}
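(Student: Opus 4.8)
The plan is to assemble the theorem from the two preceding subsections, which already do essentially all of the work; what remains is to recognize that those facts, taken together, are exactly the defining data of a complete lattice. First I would record that $\vee$ is a well-defined binary operation that is commutative, associative, and idempotent (the first Proposition of Section \ref{jop}), so that $\left(\mathsf{D}_{\kappa}\left(\mathsf{E}\right),\vee\right)$ is a join semilattice. I would then invoke the standard semilattice fact that the relation defined in \eqref{ord} is a partial order---reflexivity from idempotency, antisymmetry from commutativity, transitivity from associativity---and that under this order $\left[X\right]\vee\left[Y\right]$ is the least upper bound of $\left\{ \left[X\right],\left[Y\right]\right\}$. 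This upgrades $\mathsf{D}_{\kappa}\left(\mathsf{E}\right)$ to a poset equipped with a binary join.

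Next I would fix the bounds: the two identities $\left[\hat{\mathbf{0}}\sqcup{\textstyle \bigsqcup_{i\in I}}A^{i}\right]=\left[{\textstyle \bigsqcup_{i\in I}}A^{i}\right]$ and $\left[{\textstyle \bigsqcup_{i\in I}}A^{i}\sqcup\hat{\mathbf{1}}\right]=\left[\hat{\mathbf{1}}\right]$ already verified give $\left[\hat{\mathbf{0}}\right]\leq\left[X\right]\leq\left[\hat{\mathbf{1}}\right]$ for every class, so $\left[\hat{\mathbf{0}}\right]$ and $\left[\hat{\mathbf{1}}\right]$ are the bottom and top. The substance is completeness, and here I would point directly to the Completeness subsection: every non-empty $S\subseteq\mathsf{D}_{\kappa}\left(\mathsf{E}\right)$ has a least upper bound, given explicitly by $\bigvee S=\left[{\textstyle \bigsqcup_{j\in J}}\mathbb{A}_{j}\right]$, the one delicate point being that this term still carries at most $2^{\left|\mathsf{E}^{\kappa}\right|}$ letters (Remark \ref{recar}, via \eqref{jiji}) and hence genuinely lies in $\mathsf{T}\left(\mathsf{E}^{\kappa}\right)$. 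Together with the convention $\bigvee\emptyset=\left[\hat{\mathbf{0}}\right]$ from \eqref{sup0}, this shows every subset, empty or not, admits a supremum.

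At that stage I would invoke the classical order-theoretic theorem that a poset in which every subset has a join is automatically a complete lattice: for any $S$ the meet is recovered as $\bigwedge S=\bigvee S^{l}$, with $S^{l}$ the lower-bound set of \eqref{sl-1}, which is non-empty since $\left[\hat{\mathbf{0}}\right]\in S^{l}$. This yields the meet operation $\wedge$ and the value $\bigwedge\emptyset=\bigvee\mathsf{D}_{\kappa}\left(\mathsf{E}\right)=\left[\hat{\mathbf{1}}\right]$, consistent with \eqref{sup0}, and completes the structure $\left(\mathsf{D}_{\kappa}\left(\mathsf{E}\right),\leq,\vee,\wedge,\left[\hat{\mathbf{0}}\right],\left[\hat{\mathbf{1}}\right]\right)$.

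The main obstacle is not in this final assembly, which is entirely routine once the suprema are in hand, but in the supremum formula itself---already dispatched earlier: checking that the explicit term is an upper bound (immediate from \eqref{aaa}) and that it lies below every upper bound $\mathbb{C}$ (which uses \eqref{abb} to promote the statement ``every letter of each $\mathbb{A}_{j}$ sits below some letter of $\mathbb{C}$'' to the same statement for the concatenated term ${\textstyle \bigsqcup_{j\in J}}\mathbb{A}_{j}$). In effect, then, the theorem is just a restatement, in complete-lattice language, of what the Completeness subsection has already established, and the proof reduces to citing those results in the right order.
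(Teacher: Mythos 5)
Your proposal is correct and follows essentially the same route as the paper, which itself proves this theorem simply by assembling the results of the preceding subsections (the well-definedness and semilattice axioms for $\vee$, the bounds, the explicit supremum formula with the cardinality check of Remark \ref{recar}, and the standard passage from all-joins to all-meets via $\bigwedge S=\bigvee S^{l}$) and then stating ``Concluding, we have proved the next theorem.'' You cite the same ingredients in the same order, so there is nothing to add.
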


The order embedding $\varphi_{\kappa}:\mathsf{E}^{\kappa}\hookrightarrow\mathsf{D}_{\kappa}\left(\mathsf{E}\right)$
(see Eq. \eqref{emb}) enable us to see $\mathsf{E}^{\kappa}$ as
a subset of $\mathsf{D}_{\kappa}\left(\mathsf{E}\right)$, and consequently
to write $\left[A\right]=A$ for all $A\in\mathsf{E}^{\kappa}$. In
particular, we can write
\[
\left[\hat{\mathbf{0}}\right]=\hat{\mathbf{0}}\quad\textrm{and}\quad\left[\hat{\mathbf{1}}\right]=\hat{\mathbf{1}}.
\]
On the other hand, for every element $\left[{\textstyle \bigsqcup_{i\in I}}A^{i}\right]\in\mathsf{D}_{\kappa}\left(\mathsf{E}\right)$,
we have that
\[
\left[{\textstyle \bigsqcup_{i\in I}}A^{i}\right]={\textstyle \bigvee_{i\in I}}\left[A^{i}\right].
\]
So, we can write the elements of $\mathsf{D}_{\kappa}\left(\mathsf{E}\right)$
as 
\[
{\textstyle \bigvee_{i\in I}}A^{i},\quad\textrm{with}\quad I\neq\emptyset\quad\textrm{and}\quad A_{i}\in\mathsf{E}^{\kappa},\;\forall i\in I,
\]
and we will do it from now on.

\subsection{\label{inff} An infimum formula for $\mathsf{D}_{\kappa}\left(\mathsf{E}\right)$}

Given a non-empty subset $S\subseteq\mathsf{D}_{\kappa}\left(\mathsf{E}\right)$,
let us calculate $\bigwedge S$. We begin with the particular case
in which $S\subseteq\mathsf{E}^{\kappa}$ (using above mentioned identifications),
i.e.
\[
S=\left\{ A_{j}\in\mathsf{E}^{\kappa}:j\in J\right\} .
\]
Since the component-wise meet ${\textstyle \bigcap_{j\in J}A_{j}}\in\mathsf{E}^{\kappa}$
satisfies
\[
{\textstyle \bigcap_{j\in J}A_{j}}\subseteq A_{l},\quad\forall l\in I,
\]
then, using the embedding $\varphi_{\kappa}$ (see Eq. \eqref{emb}),
\[
{\textstyle \bigcap_{j\in J}A_{j}}\leq A_{l},\quad\forall l\in I,
\]
i.e. $\bigcap_{j\in J}A_{j}$ is a lower bound of $S$. On the other
hand, an element ${\textstyle \bigvee_{k\in K}}C^{k}$ is a lower
bound of $S$, i.e. 
\[
{\textstyle \bigvee_{k\in K}}C^{k}\leq A_{j},\quad\forall j\in J,
\]
or equivalently
\[
\left[{\textstyle \bigsqcup_{k\in K}}C^{k}\sqcup A_{j}\right]=\left[A_{j}\right],\quad\forall j\in J,
\]
if and only if (see Eq. \eqref{abb}) for all $k\in K$ we have that
$C^{k}\subseteq A_{j}$, for all $j\in J$. Thus, for all $k\in K$,
\[
{\textstyle C^{k}\subseteq\bigcap_{j\in J}A_{j}},
\]
so
\[
{\textstyle \left[{\textstyle \bigsqcup_{k\in K}}C^{k}\sqcup\bigcap_{j\in J}A_{j}\right]=\left[\bigcap_{j\in J}A_{j}\right]}
\]
and consequently
\[
{\textstyle {\textstyle \bigvee_{k\in K}}C^{k}\leq\bigcap_{j\in J}A_{j}.}
\]
All this implies that $\bigcap_{j\in J}A_{j}$ is the greatest lower
bound of $S$, i.e.
\begin{equation}
{\textstyle \bigwedge_{j\in J}A_{j}=\bigcap_{j\in J}A_{j}}.\label{vu}
\end{equation}

\bigskip{}

Now, let us study the general case. Consider a non-empty subset

\[
S=\left\{ {\textstyle \bigvee_{i\in I_{j}}}A_{j}^{i}:j\in J\right\} \subseteq\mathsf{D}_{\kappa}\left(\mathsf{E}\right),
\]
and let us calculate 
\[
{\textstyle \bigwedge S=\bigwedge_{j\in J}{\textstyle \bigvee_{i\in I_{j}}}A_{j}^{i}.}
\]
As happens in any complete lattice, we have that 
\begin{equation}
{\textstyle \bigvee_{f\in F}\bigwedge_{j\in J}A_{j}^{f\left(j\right)}\leq\bigwedge_{j\in J}\bigvee_{i\in I_{j}}A_{j}^{i}}\label{minmax}
\end{equation}
(i.e. the \textsl{Min-Max Property} holds), where 
\begin{equation}
F=\left\{ \left.{\textstyle f:J\rightarrow\bigcup_{j\in J}I_{j}}\quad\right/\quad f\left(j\right)\in I_{j},\;\forall j\in J\right\} .\label{F}
\end{equation}
In particular,
\[
{\textstyle \bigvee_{f\in F}\bigwedge_{j\in J}A_{j}^{f\left(j\right)}}
\]
is a lower bound of $S$. Let us see that it is the greatest one.
Firstly note that, according to \eqref{vu}, 
\[
{\textstyle \bigvee_{f\in F}\bigwedge_{j\in J}A_{j}^{f\left(j\right)}}={\textstyle \bigvee_{f\in F}\bigcap_{j\in J}A_{j}^{f\left(j\right)}}.
\]
Secondly, suppose that ${\textstyle \bigvee_{k\in K}}C^{k}$ is another
lower bound of $S$, i.e. 
\[
{\textstyle \bigvee_{k\in K}}C^{k}\leq{\textstyle \bigvee_{i\in I_{j}}}A_{j}^{i},\quad\forall j\in J,
\]
or equivalently
\[
\left[{\textstyle \bigsqcup_{k\in K}}C^{k}\sqcup{\textstyle \bigsqcup_{i\in I_{j}}}A_{j}^{i}\right]=\left[{\textstyle \bigsqcup_{i\in I_{j}}}A_{j}^{i}\right],\quad\forall j\in J,
\]
what is possible if and only if (see Eq. \eqref{abb}) for all $j\in J$
and all $k\in K$ there exists $i\in I_{j}$ such that $C^{k}\subseteq A_{j}^{i}$.
Thus, for all $k\in K$ there exists a function 
\[
f_{k}:J\rightarrow\cup_{j\in J}I_{j}
\]
such that $f_{k}\left(j\right)\in I_{j}$ and 
\[
C^{k}\subseteq A_{j}^{f_{k}\left(j\right)},\quad\forall j\in J,
\]
or equivalently ,
\[
{\textstyle C^{k}\subseteq\bigcap_{j\in J}A_{j}^{f_{k}\left(j\right)}.}
\]
Using the embedding $\varphi_{\kappa}$, last equation says that 
\[
{\textstyle C^{k}\leq\bigcap_{j\in J}A_{j}^{f_{k}\left(j\right)},}\quad\forall k\in K,
\]
and consequently (since the functions $f_{k}$'s define a subset of
$F$)
\[
{\textstyle {\textstyle \bigvee_{k\in K}}C^{k}\leq{\textstyle \bigvee_{k\in K}}\bigcap_{j\in J}A_{j}^{f_{k}\left(j\right)}\leq{\textstyle \bigvee_{f\in F}}\bigcap_{j\in J}A_{j}^{f\left(j\right)},}
\]
This finally proves that 
\begin{equation}
{\textstyle \bigwedge S=\bigwedge_{j\in J}{\textstyle \bigvee_{i\in I_{j}}}A_{j}^{i}={\textstyle \bigvee_{f\in F}}\bigcap_{j\in J}A_{j}^{f\left(j\right)}.}\label{finf}
\end{equation}

\subsection{\label{cdis} Complete distributivity}

In this section, we shall show that 
\begin{equation}
{\textstyle \bigwedge_{j\in J}{\textstyle \bigvee_{i\in I_{j}}}\mathbb{A}_{j}^{i}={\textstyle \bigvee_{f\in F}}\bigwedge_{j\in J}\mathbb{A}_{j}^{f\left(j\right)}},\label{comdis}
\end{equation}
for every family of elements $\mathbb{A}_{j}^{i}\in\mathsf{D}_{\kappa}\left(\mathsf{E}\right)$.
According to Ref. \cite{raney}, this is true if and only if so is
the dual identity:
\begin{equation}
{\textstyle \bigvee_{j\in J}{\textstyle \bigwedge_{i\in I_{j}}}\mathbb{A}_{j}^{i}={\textstyle \bigwedge_{f\in F}}\bigvee_{j\in J}\mathbb{A}_{j}^{f\left(j\right)}.}\label{comdisd}
\end{equation}
A complete lattice satisfying Eqs. \eqref{comdis} and \eqref{comdisd}
is said to be completely distributive.
\begin{thm}
The lattice $\left(\mathsf{D}_{\kappa}\left(\mathsf{E}\right),\leq,\vee,\wedge,\hat{\mathbf{0}},\hat{\mathbf{1}}\right)$
is completely distributive.
\end{thm}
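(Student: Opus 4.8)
The plan is to reduce the complete distributivity identity \eqref{comdis} to the infimum formula \eqref{finf}, which already rewrites any meet of joins of elements of $\mathsf{E}^{\kappa}$ as a single join of component-wise intersections. By Raney's theorem \cite{raney} it suffices to prove \eqref{comdis}, since \eqref{comdisd} then follows automatically. The Min-Max Property \eqref{minmax} already supplies, in any complete lattice, the inequality $\bigvee_{f\in F}\bigwedge_{j\in J}\mathbb{A}_j^{f(j)}\leq\bigwedge_{j\in J}\bigvee_{i\in I_j}\mathbb{A}_j^i$, so only the reverse inequality is at stake; in fact the reduction below will produce equality outright.

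First I would put each $\mathbb{A}_j^i\in\mathsf{D}_{\kappa}(\mathsf{E})$ into the normal form $\mathbb{A}_j^i=\bigvee_{l\in L_{j,i}}A_{j,i}^l$ with $A_{j,i}^l\in\mathsf{E}^{\kappa}$, as permitted at the end of Section \ref{compss}. Flattening the inner joins (completeness gives $\bigvee_{i\in I_j}\bigvee_{l\in L_{j,i}}=\bigvee_{(i,l)}$, the combined index set having admissible cardinality by \eqref{jiji}), each factor becomes $\bigvee_{i\in I_j}\mathbb{A}_j^i=\bigvee_{(i,l)}A_{j,i}^l$, a join of elements of $\mathsf{E}^{\kappa}$. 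Applying \eqref{finf} to the meet over $j$ yields
\[
\bigwedge_{j\in J}\bigvee_{i\in I_j}\mathbb{A}_j^i=\bigvee_{g}\ \bigcap_{j\in J}A_{j,i_j}^{l_j},\qquad g\colon j\longmapsto (i_j,l_j),
\]
where $g$ ranges over all choice functions selecting, for each $j\in J$, a pair $(i_j,l_j)$ with $i_j\in I_j$ and $l_j\in L_{j,i_j}$.

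For the right-hand side I would fix $f\in F$ and apply \eqref{finf} to $\bigwedge_{j\in J}\mathbb{A}_j^{f(j)}=\bigwedge_{j\in J}\bigvee_{l\in L_{j,f(j)}}A_{j,f(j)}^l$, obtaining $\bigvee_{h}\bigcap_{j\in J}A_{j,f(j)}^{h(j)}$ with $h(j)\in L_{j,f(j)}$. Joining over $f$ and flattening the iterated suprema produces a single join indexed by the pairs $(f,h)$ subject to $h(j)\in L_{j,f(j)}$ for every $j$. The crux is that such a pair encodes exactly the same data as one choice function $g$, via $g(j)=(f(j),h(j))$, and that under this bijection the intersections match term by term: $\bigcap_{j}A_{j,f(j)}^{h(j)}=\bigcap_{j}A_{j,i_j}^{l_j}$. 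Hence both sides of \eqref{comdis} collapse to the very same join of component-wise intersections, establishing the identity.

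The step I expect to be the main obstacle is the bookkeeping of the nested choice functions: one must verify that $(f,h)\mapsto g$ is a genuine bijection between the two indexing sets --- not merely that each term on one side is dominated by some term on the other --- and simultaneously check at every stage that the index sets remain of cardinality at most $2^{|\mathsf{E}^{\kappa}|}$, so that all intermediate terms are legitimate members of $\mathsf{T}(\mathsf{E}^{\kappa})$ and \eqref{finf} genuinely applies. Once this correspondence is laid out cleanly, the equality of the two joins is immediate and the theorem follows.
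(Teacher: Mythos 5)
Your proposal is correct and follows essentially the same route as the paper: decompose each $\mathbb{A}_{j}^{i}$ into a join of elements of $\mathsf{E}^{\kappa}$, flatten, apply the infimum formula \eqref{finf}, and identify the resulting choice functions $h$ with pairs $(f,g)$ (the paper runs this identification from the left-hand side outward, while you collapse both sides to the common join, but the bijection and the double use of \eqref{finf} are identical). The cardinality bookkeeping you flag as the main obstacle is exactly what Remark \ref{recar} and Eq. \eqref{jiji} were set up to handle, so no new difficulty arises there.
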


\begin{proof}
It is enough to prove the validity of Eq. \eqref{comdis}, which,
writing $\mathbb{A}_{j}^{i}=\bigvee_{k\in K_{ij}}A_{j}^{\left(i,k\right)}$,
for some set $K_{ij}$ and some $A_{j}^{i,k}\in\mathsf{E}^{\kappa}$,
it translates to
\begin{equation}
{\textstyle \bigwedge_{j\in J}{\textstyle \bigvee_{i\in I_{j}}}\bigvee_{k\in K_{ij}}A_{j}^{\left(i,k\right)}={\textstyle \bigvee_{f\in F}}\bigwedge_{j\in J}\bigvee_{k\in K_{f\left(j\right)j}}A_{j}^{\left(f\left(j\right),k\right)}}.\label{e}
\end{equation}
To prove above equation, let us write
\begin{equation}
{\textstyle {\textstyle \bigvee_{i\in I_{j}}}\bigvee_{k\in K_{ij}}A_{j}^{\left(i,k\right)}={\textstyle \bigvee_{l\in L_{j}}}A_{j}^{\left[l\right]}},\label{a}
\end{equation}
where
\[
{\textstyle L_{j}=\left\{ \left(i,k\right)\;:\;i\in I_{j},\;k\in K_{ij}\right\} ,\quad j\in J.}
\]
We know from the results of the last section that
\begin{equation}
{\textstyle \bigwedge_{j\in J}{\textstyle \bigvee_{l\in L_{j}}}A_{j}^{\left[l\right]}={\textstyle \bigvee_{h\in H}}\bigcap_{j\in J}A_{j}^{\left[h\left(j\right)\right]},}\label{b}
\end{equation}
with 
\[
H=\left\{ h:J\rightarrow\cup_{j\in J}L_{j}\;/\;h\left(j\right)\in L_{j}\right\} .
\]
Note that, according to the definitions of $L_{j}$ and $H$, we can
write
\[
h\left(j\right)=\left(f\left(j\right),g\left(j\right)\right),\quad j\in J,
\]
where $f$ is a function inside the set $F$ (see Eq. \eqref{F}),
and $g$ is inside 
\[
G_{f}=\left\{ g:J\rightarrow\cup_{j\in J}K_{f\left(j\right)j}\;:\;g\left(j\right)\in K_{f\left(j\right)j}\right\} .
\]
Then
\begin{equation}
{\textstyle {\textstyle \bigvee_{h\in H}}\bigcap_{j\in J}A_{j}^{\left[h\left(j\right)\right]}={\textstyle \bigvee_{f\in F}}{\textstyle \bigvee_{g\in G_{f}}}\bigcap_{j\in J}A_{j}^{\left(f\left(j\right),g\left(j\right)\right)}},\label{c}
\end{equation}
and, using again the results of the last section, for each $f\in F$,
\begin{equation}
{\textstyle {\textstyle \bigvee_{g\in G_{f}}}\bigcap_{j\in J}A_{j}^{\left(f\left(j\right),g\left(j\right)\right)}=\bigwedge_{j\in J}\bigvee_{k\in K_{f\left(j\right)j}}A_{j}^{\left(f\left(j\right),k\right)}}.\label{d}
\end{equation}
Finally, combining the Eqs. \eqref{a} to \eqref{d}, we have precisely
Eq. \eqref{e}, as wanted.
\end{proof}

\section{The universal logic of repeated experiments}

\label{unlo} 

\subsection{The negation operation}

We want to define a \textit{negation} for each proposition of $\mathsf{D}_{\kappa}\left(\mathsf{E}\right)$\textsl{.}
(Recall that $^{c}$ denotes the orthocomplementation of $\mathsf{E}$).
Given 
\[
A=\left(a_{1},\ldots,a_{n},\ldots\right)\in\mathsf{E}^{\kappa}\subseteq\mathsf{D}_{\kappa}\left(\mathsf{E}\right),
\]
a natural candidate for its negation could be 
\[
{\textstyle A^{*}=\left[\bigsqcup_{n\in\kappa}\left(\mathbf{1},\ldots,\mathbf{1},a_{n}^{c},\mathbf{1},\ldots\right)\right]=\bigvee_{n\in\kappa}\left(\mathbf{1},\ldots,\mathbf{1},a_{n}^{c},\mathbf{1},\ldots\right)},
\]
because $\left(\mathbf{1},\ldots,\mathbf{1},a_{n}^{c},\mathbf{1},\ldots\right)$
represents the proposition: ``$a_{n}$ does not occur in the $n$-th
repetition.'' In particular, 
\begin{equation}
\hat{\mathbf{0}}^{*}=\bigvee_{n\in\kappa}(\underbrace{\mathbf{1},\ldots,\mathbf{1},\mathbf{0}^{c}}_{n},\mathbf{1},\ldots)=\bigvee_{n\in\kappa}\left(\mathbf{1},\ldots,\mathbf{1},\ldots\right)=\left(\mathbf{1},\ldots,\mathbf{1},\ldots\right)=\hat{\mathbf{1}},\label{01}
\end{equation}
and
\begin{equation}
\hat{\mathbf{1}}^{*}=\bigvee_{n\in\kappa}(\underbrace{\mathbf{1},\ldots,\mathbf{1},\mathbf{1}^{c}}_{n},\mathbf{1},\ldots)=\bigvee_{n\in\kappa}\left(\mathbf{0},\ldots,\mathbf{0},\ldots\right)=\left(\mathbf{0},\ldots,\mathbf{0},\ldots\right)=\hat{\mathbf{0}}.\label{1s0}
\end{equation}
And for an arbitrary element ${\textstyle \bigvee_{i\in I}}A^{i}\in\mathsf{D}_{\kappa}\left(\mathsf{E}\right)$
(with $I\neq\emptyset$), the natural candidate would be

\begin{equation}
{\textstyle \left({\textstyle \bigvee_{i\in I}}A^{i}\right)^{*}={\textstyle \bigwedge_{i\in I}}\left(A^{i}\right)^{*}=\bigwedge_{i\in I}\bigvee_{n\in\kappa}\left(\mathbf{1},\ldots,\mathbf{1},\left(a_{n}^{i}\right)^{c},\mathbf{1},\ldots\right)}.\label{dor}
\end{equation}
Using the complete distributivity, note that
\[
{\textstyle \bigwedge_{i\in I}\bigvee_{n\in\kappa}\left(\mathbf{1},\ldots,\mathbf{1},\left(a_{n}^{i}\right)^{c},\mathbf{1},\ldots\right)={\textstyle \bigvee_{f\in F}}\bigcap_{i\in I}\left(\mathbf{1},\ldots,\mathbf{1},\left(a_{f\left(i\right)}^{i}\right)^{c},\mathbf{1},\ldots\right)},
\]
with $F=\left\{ f:I\rightarrow\kappa\right\} $, and since 
\[
{\textstyle \bigcap_{i\in I}=\bigcap_{n\in\kappa}\bigcap_{i\in f^{-1}\left(n\right)}},
\]
we have that
\begin{equation}
\begin{array}{lll}
\bigwedge_{i\in I}\bigvee_{n\in\kappa}\left(\mathbf{1},\ldots,\mathbf{1},\left(a_{n}^{i}\right)^{c},\mathbf{1},\ldots\right) & = & {\textstyle \bigvee_{f\in F}\bigcap_{n\in\kappa}\bigcap_{i\in f^{-1}\left(n\right)}}\left(\mathbf{1},\ldots,\mathbf{1},\left(a_{n}^{i}\right)^{c},\mathbf{1},\ldots\right)\\
\\
 & = & {\textstyle \bigvee_{f\in F}\bigcap_{n\in\kappa}}\left(\mathbf{1},\ldots,\mathbf{1},\bigcap_{i\in f^{-1}\left(n\right)}\left(a_{n}^{i}\right)^{c},\mathbf{1},\ldots\right)\\
\\
 & = & {\textstyle \bigvee_{f\in F}}\left(\bigcap_{i\in f^{-1}\left(1\right)}\left(a_{1}^{i}\right)^{c},\ldots,\bigcap_{i\in f^{-1}\left(n\right)}\left(a_{n}^{i}\right)^{c},\ldots\right).
\end{array}\label{dor1}
\end{equation}
That is to say, 
\begin{equation}
{\textstyle \left({\textstyle \bigvee_{i\in I}}A^{i}\right)^{*}={\textstyle \bigvee_{f\in F}}\left(\bigcap_{i\in f^{-1}\left(1\right)}\left(a_{1}^{i}\right)^{c},\ldots,\bigcap_{i\in f^{-1}\left(n\right)}\left(a_{n}^{i}\right)^{c},\ldots\right)}\label{dor2}
\end{equation}
would be a good candidate. (Recall Eqs. \eqref{fe2} and \eqref{fen}).
Let us see that above formula gives rise to a well-defined function
from $\mathsf{D}_{\kappa}\left(\mathsf{E}\right)$ to $\mathsf{D}_{\kappa}\left(\mathsf{E}\right)$.
\begin{prop}
\label{orev} The function $*:\mathsf{D}_{\kappa}\left(\mathsf{E}\right)\rightarrow\mathsf{D}_{\kappa}\left(\mathsf{E}\right)$
given by
\begin{equation}
{\textstyle \left(\left[{\textstyle \bigsqcup_{i\in I}}A^{i}\right]\right)^{*}=\left[{\textstyle \bigsqcup_{f\in F}}\left(\bigcap_{i\in f^{-1}\left(1\right)}\left(a_{1}^{i}\right)^{c},\ldots,\bigcap_{i\in f^{-1}\left(n\right)}\left(a_{n}^{i}\right)^{c},\ldots\right)\right],}\label{dop}
\end{equation}
with $F=\left\{ f:I\rightarrow\kappa\right\} $, is well-defined and
it is order-reversing, i.e. for all $\mathbb{A},\mathbb{B}\in\mathsf{D}_{\kappa}\left(\mathsf{E}\right)$
such that $\mathbb{A}\leq\mathbb{B}$, we have that $\mathbb{B}^{*}\leq\mathbb{A}^{*}$. 
\end{prop}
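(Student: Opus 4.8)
The plan is to recognize that the explicit expression on the right-hand side of \eqref{dop} is nothing but the intrinsic meet $\bigwedge_{i\in I}\left(A^{i}\right)^{*}$, where for a single letter $A=\left(a_{1},\ldots,a_{n},\ldots\right)\in\mathsf{E}^{\kappa}$ we abbreviate $A^{*}=\bigvee_{n\in\kappa}\left(\mathbf{1},\ldots,\mathbf{1},a_{n}^{c},\mathbf{1},\ldots\right)$. This is exactly what the computation \eqref{dor}--\eqref{dor2} establishes, using complete distributivity of $\mathsf{D}_{\kappa}\left(\mathsf{E}\right)$ together with the infimum formula \eqref{vu}. Once this reformulation is in hand, both assertions reduce to elementary order arguments in $\mathsf{D}_{\kappa}\left(\mathsf{E}\right)$, and the only genuinely new ingredient is a monotonicity lemma for the single-letter operation.

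First I would check that the single-letter star reverses the component-wise order: if $A,B\in\mathsf{E}^{\kappa}$ satisfy $A\subseteq B$, then $B^{*}\leq A^{*}$. Indeed $a_{n}\subseteq b_{n}$ forces $b_{n}^{c}\subseteq a_{n}^{c}$ in $\mathsf{E}$, so $\left(\mathbf{1},\ldots,b_{n}^{c},\ldots\right)\subseteq\left(\mathbf{1},\ldots,a_{n}^{c},\ldots\right)$ component-wise, whence, via the embedding $\varphi_{\kappa}$, $\left(\mathbf{1},\ldots,b_{n}^{c},\ldots\right)\leq A^{*}$ for every $n$; taking the join over $n\in\kappa$ gives $B^{*}\leq A^{*}$. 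From this I would extract the key term-level estimate: whenever $\bigsqcup_{i\in I}A^{i}$ and $\bigsqcup_{j\in J}B^{j}$ are terms such that for every $i\in I$ there is $j\in J$ with $A^{i}\subseteq B^{j}$, then $\bigwedge_{j\in J}\left(B^{j}\right)^{*}\leq\bigwedge_{i\in I}\left(A^{i}\right)^{*}$. For each $i$ one chooses such a $j$, so that $\bigwedge_{j'\in J}\left(B^{j'}\right)^{*}\leq\left(B^{j}\right)^{*}\leq\left(A^{i}\right)^{*}$ by the lemma; this exhibits $\bigwedge_{j\in J}\left(B^{j}\right)^{*}$ as a lower bound of $\left\{ \left(A^{i}\right)^{*}:i\in I\right\}$, hence it is below their meet.

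Both claims then follow from this single estimate. For well-definedness, if $\left[\bigsqcup_{i\in I}A^{i}\right]=\left[\bigsqcup_{j\in J}B^{j}\right]$, the characterization by points $1$ and $2$ of Section \ref{tap} gives both cross-domination conditions at once, so applying the estimate in each direction yields $\bigwedge_{j\in J}\left(B^{j}\right)^{*}=\bigwedge_{i\in I}\left(A^{i}\right)^{*}$; thus the value of $*$ is independent of the chosen representative. For the order-reversing property, $\mathbb{A}\leq\mathbb{B}$ unpacks through \eqref{abb} into precisely the hypothesis ``for all $i\in I$ there is $j\in J$ with $A^{i}\subseteq B^{j}$'', so the estimate delivers $\mathbb{B}^{*}=\bigwedge_{j\in J}\left(B^{j}\right)^{*}\leq\bigwedge_{i\in I}\left(A^{i}\right)^{*}=\mathbb{A}^{*}$ at once.

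I expect no serious obstacle, since the real work has been front-loaded into the distributivity computation \eqref{dor}--\eqref{dor2} that identifies the formula with $\bigwedge_{i\in I}\left(A^{i}\right)^{*}$. The one point that demands a little care is well-definedness: formula \eqref{dop} is written through a representative term whose set of letters is not canonical, so the independence of the value must be argued rather than read off. This is exactly what the two-directional application of the monotonicity estimate settles, and it is reassuring that well-definedness comes out as the symmetric case of the very inequality that yields the order-reversing property.
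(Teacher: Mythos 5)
Your proposal is correct and follows essentially the same route as the paper: both identify the right-hand side of \eqref{dop} with $\bigwedge_{i\in I}\left(A^{i}\right)^{*}$ via the distributivity computation \eqref{dor1}, then use the choice of a function $h:I\rightarrow J$ witnessing the domination condition of Proposition \ref{basaa} to get $\bigwedge_{j\in J}\left(B^{j}\right)^{*}\leq\bigwedge_{i\in I}\left(A^{i}\right)^{*}$, with well-definedness obtained as the symmetric case. Your packaging of the single-letter monotonicity as a separate lemma is a minor organizational difference, not a different argument.
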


\begin{proof}
Consider $\mathbb{A}=\left[{\textstyle \bigsqcup_{i\in I}}A^{i}\right]$
and $\mathbb{B}=\left[{\textstyle \bigsqcup_{j\in J}}B^{j}\right]$
(with $I,J\neq\emptyset$, by our convention) such that $\mathbb{A}\leq\mathbb{B}$,
i.e. 
\[
\left[{\textstyle \bigsqcup_{i\in I}}A^{i}\sqcup{\textstyle \bigsqcup_{j\in J}}B^{j}\right]=\left[{\textstyle \bigsqcup_{j\in J}}B^{j}\right].
\]
 Let us show that 
\begin{equation}
\left[\bigsqcup_{g\in G}\left(\bigcap_{j\in g^{-1}\left(1\right)}\left(b_{1}^{j}\right)^{c},\ldots,\bigcap_{j\in g^{-1}\left(n\right)}\left(b_{n}^{j}\right)^{c},\ldots\right)\right]\leq\left[\bigsqcup_{f\in F}\left(\bigcap_{i\in f^{-1}\left(1\right)}\left(a_{1}^{i}\right)^{c},\ldots,\bigcap_{i\in f^{-1}\left(n\right)}\left(a_{n}^{i}\right)^{c},\ldots\right)\right],\label{wwp}
\end{equation}
where the meaning of $G$ is analogous to that of $F$ (changing $I$
by $J$). Recall that (according to Proposition \ref{basaa}), for
each $i\in I$ there exists $j\in J$ such that $A^{i}\subseteq B^{j}$.
Let us choose $j=h\left(i\right)$, for some function $h:I\rightarrow J$.
Then, for each $i$, we have that $a_{n}^{i}\subseteq b_{n}^{h\left(i\right)}$,
for all $n\in\kappa$, or equivalently $\left(b_{n}^{h\left(i\right)}\right)^{c}\subseteq\left(a_{n}^{i}\right)^{c}$,
for all $n\in\kappa$, and, as a consequence, 
\[
\left(\mathbf{1},\ldots,\mathbf{1},\left(b_{n}^{h\left(i\right)}\right)^{c},\mathbf{1},\ldots\right)\subseteq\left(\mathbf{1},\ldots,\mathbf{1},\left(a_{n}^{i}\right)^{c},\mathbf{1},\ldots\right),\quad\forall i\in I,\quad\forall n\in\kappa.
\]
Then
\[
{\textstyle \bigvee_{n\in\kappa}\left(\mathbf{1},\ldots,\mathbf{1},\left(b_{n}^{h\left(i\right)}\right)^{c},\mathbf{1},\ldots\right)\leq\bigvee_{n\in\kappa}\left(\mathbf{1},\ldots,\mathbf{1},\left(a_{n}^{i}\right)^{c},\mathbf{1},\ldots\right),\quad\forall i\in I,}
\]
and
\[
{\textstyle \bigwedge_{i\in I}\bigvee_{n\in\kappa}\left(\mathbf{1},\ldots,\mathbf{1},\left(b_{n}^{h\left(i\right)}\right)^{c},\mathbf{1},\ldots\right)\leq\bigwedge_{i\in I}\bigvee_{n\in\kappa}\left(\mathbf{1},\ldots,\mathbf{1},\left(a_{n}^{i}\right)^{c},\mathbf{1},\ldots\right)}.
\]
Finally, since $\mathsf{Im}h\subseteq J$, we can conclude that

\[
{\textstyle \bigwedge_{j\in J}\bigvee_{n\in\kappa}\left(\mathbf{1},\ldots,\mathbf{1},\left(b_{n}^{j}\right)^{c},\mathbf{1},\ldots\right)\leq\bigwedge_{i\in I}\bigvee_{n\in\kappa}\left(\mathbf{1},\ldots,\mathbf{1},\left(a_{n}^{i}\right)^{c},\mathbf{1},\ldots\right)}.
\]
But, taking into account Eq. \eqref{dor1}, the members of last inequality
are exactly the members of \eqref{wwp}, so, the wanted result follows.
If in addition $\mathbb{B}\leq\mathbb{A}$, i.e. $\mathbb{A}=\mathbb{B}$,
or equivalently ${\textstyle \bigsqcup_{i\in I}}A^{i}$ and ${\textstyle \bigsqcup_{j\in J}}B^{j}$
are in the same class, the equality of such members holds and then,
according to \eqref{dop}, $*$ is well-defined. In particular, coming
back to the case in which $\mathbb{A}\leq\mathbb{B}$, above calculations
says that $\mathbb{B}^{*}\leq\mathbb{A}^{*}$, what shows that $*$
is order-reversing.
\end{proof}

\subsection{A closure operator on $\mathsf{D}_{\kappa}\left(\mathsf{E}\right)$}

Let us study some properties of $*$.
\begin{prop}
\label{mml} For all $A\in\mathsf{E}^{\kappa}$,
\begin{equation}
{\textstyle A^{*}=\bigvee_{n\in\kappa}\left(\mathbf{1},\ldots,\mathbf{1},a_{n}^{c},\mathbf{1},\ldots\right)},\label{dop2}
\end{equation}
and consequently
\begin{equation}
\hat{\mathbf{0}}^{*}=\hat{\mathbf{1}}\quad\textrm{and}\quad\hat{\mathbf{1}}^{*}=\hat{\mathbf{0}}.\label{dop3}
\end{equation}
And given any family $S=\left\{ \mathbb{A}_{j}:j\in J\right\} \subseteq\mathsf{D}_{\kappa}\left(\mathsf{E}\right)$,
we have that
\begin{equation}
{\textstyle \left(\bigvee_{j\in J}\mathbb{A}_{j}\right)^{*}=\bigwedge_{j\in J}\mathbb{A}_{j}^{*},}\label{dop4}
\end{equation}
or equivalently 
\[
\left(\bigvee S\right)^{*}=\bigwedge S^{*},\quad\textrm{where}\quad S^{*}=\left\{ \mathbb{A}_{j}^{*}:j\in J\right\} .
\]
\end{prop}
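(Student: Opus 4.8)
The plan is to derive the three identities in sequence, the first two by direct specialization of the closed form \eqref{dop} of $*$, and the third by reducing everything to joins of ``letters'' in $\mathsf{E}^\kappa$ and invoking the infimum formula \eqref{finf}. First I would obtain \eqref{dop2} by specializing \eqref{dop} to a single-letter term. If $A=(a_1,\ldots,a_n,\ldots)\in\mathsf{E}^\kappa$ is identified with the class $[A]$ whose index set $I=\{\ast\}$ is a singleton, then $F=\{f:I\to\kappa\}$ is in bijection with $\kappa$: each $f$ selects one index $n\in\kappa$, for which $f^{-1}(n)=\{\ast\}$ while $f^{-1}(m)=\emptyset$ for $m\neq n$. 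Since an empty intersection equals $\mathbf{1}$, the $f$-th letter in \eqref{dop} collapses to $(\mathbf{1},\ldots,\mathbf{1},a_n^c,\mathbf{1},\ldots)$, and the join over $f$ recovers exactly \eqref{dop2}.

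Equation \eqref{dop3} then follows from \eqref{dop2} by the elementary computations already recorded in \eqref{01} and \eqref{1s0}: for $\hat{\mathbf{0}}$ every $a_n^c=\mathbf{1}$, so each letter equals $\hat{\mathbf{1}}$ and the join is $\hat{\mathbf{1}}$; for $\hat{\mathbf{1}}$ every letter carries a $\mathbf{0}$ component and is therefore identified with $\hat{\mathbf{0}}$, so the join is $\hat{\mathbf{0}}$.

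The core is \eqref{dop4}, and I would first settle the \emph{letter case}: for any family $\{A^i:i\in I\}\subseteq\mathsf{E}^\kappa$ of admissible cardinality, $\bigl(\bigvee_{i\in I}A^i\bigr)^*=\bigwedge_{i\in I}(A^i)^*$. This is essentially the content of the motivating computation \eqref{dor}--\eqref{dor2}: substituting \eqref{dop2} into the right-hand side gives $\bigwedge_{i\in I}\bigvee_{n\in\kappa}(\mathbf{1},\ldots,(a_n^i)^c,\ldots)$, the infimum formula \eqref{finf} rewrites this as $\bigvee_{f\in F}\bigcap_{i\in I}(\mathbf{1},\ldots,(a_{f(i)}^i)^c,\ldots)$ with $F=\{f:I\to\kappa\}$, and the component-wise regrouping $\bigcap_{i\in I}=\bigcap_{n\in\kappa}\bigcap_{i\in f^{-1}(n)}$ used in \eqref{dor1} turns this into the closed form \eqref{dop} of $\bigl(\bigvee_{i\in I}A^i\bigr)^*$.

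To reach the general case I would write each $\mathbb{A}_j=\bigvee_{i\in I_j}A_j^i$ with $A_j^i\in\mathsf{E}^\kappa$ and use associativity of the complete join to amalgamate $\bigvee_{j\in J}\mathbb{A}_j=\bigvee_{(i,j)\in K}A_j^i$, where $K=\{(i,j):j\in J,\ i\in I_j\}$ has cardinality $\le 2^{|\mathsf{E}^\kappa|}$ by \eqref{jiji}. Applying the letter case to the family indexed by $K$ gives $\bigl(\bigvee_{(i,j)\in K}A_j^i\bigr)^*=\bigwedge_{(i,j)\in K}(A_j^i)^*$; regrouping this meet as $\bigwedge_{j\in J}\bigl(\bigwedge_{i\in I_j}(A_j^i)^*\bigr)$ by associativity of arbitrary infima in a complete lattice, and applying the letter case in reverse inside each block, $\bigwedge_{i\in I_j}(A_j^i)^*=(\bigvee_{i\in I_j}A_j^i)^*=\mathbb{A}_j^*$, yields $\bigwedge_{j\in J}\mathbb{A}_j^*$, which is \eqref{dop4}; the empty family is disposed of directly by the conventions \eqref{sup0} together with \eqref{dop3}. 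The only genuinely delicate point is the index bookkeeping, namely checking that applying the definition of $*$ to the amalgamated join over $K$ agrees with nesting the meets by $j$; but this reduces exactly to the regrouping $\bigcap_{i\in I}=\bigcap_{n\in\kappa}\bigcap_{i\in f^{-1}(n)}$ and the associativity of infima, so no distributivity input beyond \eqref{finf} is required.
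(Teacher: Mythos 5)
Your proposal is correct and follows essentially the same route as the paper: \eqref{dop2} by specializing \eqref{dop} to a singleton index set, \eqref{dop3} via the computations of \eqref{01} and \eqref{1s0}, and \eqref{dop4} by first establishing the letter case through \eqref{dor1}--\eqref{dor2} and the infimum formula \eqref{finf}, then amalgamating the double join and applying the letter case twice. Your extra attention to the empty-intersection convention and the cardinality bound \eqref{jiji} is welcome but does not change the argument.
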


\begin{proof}
We must show \eqref{dop2} from the precise definition of $*$, which
is given by \eqref{dop} (or \eqref{dor2}). Note that, if $I$ is
a singleton $\left\{ s\right\} $, then we can define a bijection
\[
m\in\kappa\mapsto f_{m}\in F,
\]
where $f_{m}\left(s\right)=m$. Thus $f_{m}^{-1}\left(n\right)=\emptyset$
for all $n\neq m$, and \eqref{dop} translates exactly to \eqref{dop2}.
The Eq. \eqref{dop3} can be proved as in \eqref{01} and \eqref{1s0}.

Let us prove \eqref{dop4}. If $J=\emptyset$, the result follows
from \eqref{sup0} and \eqref{dop3}. Otherwise, let us first assume
that $\mathbb{A}_{j}=A_{j}\in\mathsf{E}^{\kappa}$ for all $j\in J$.
Then, combining \eqref{dor1}, \eqref{dor2} (or \eqref{dop}) and
\eqref{dop2}, we easily arrive at
\begin{equation}
{\textstyle \left(\bigvee_{j\in J}A_{j}\right)^{*}=\bigwedge_{j\in J}\left(A_{j}\right)^{*}.}\label{pc}
\end{equation}
Now consider the general case, and write ${\textstyle \mathbb{A}_{j}=\bigvee_{i\in I_{j}}A_{j}^{i}}$.
Then, 
\[
{\textstyle \left(\bigvee_{j\in J}\mathbb{A}_{j}\right)^{*}=\left(\bigvee_{j\in J}\bigvee_{i\in I_{j}}A_{j}^{i}\right)^{*}=\bigwedge_{j\in J}\bigwedge_{i\in I_{j}}\left(A_{j}^{i}\right)^{*}=\bigwedge_{j\in J}\left(\bigvee_{i\in I_{j}}A_{j}^{i}\right)^{*}=\bigwedge_{j\in J}\mathbb{A}_{j}^{*}},
\]
where we have used twice the result of the particular case \eqref{pc}.
\end{proof}
The function $*$ has some of the properties of an orthocomplementation,
but it is not one. For instance, given $A=\left(a_{1},\ldots,a_{n},\ldots\right)\in\mathsf{E}^{\kappa}$,
from \eqref{dop2} and \eqref{dop4} we have that
\begin{equation}
\begin{array}{lll}
A^{**} & = & \left(\bigvee_{n\in\kappa}\left(\mathbf{1},\ldots,\mathbf{1},a_{n}^{c},\mathbf{1},\ldots\right)\right)^{*}=\bigwedge_{n\in\kappa}\left(\mathbf{1},\ldots,\mathbf{1},a_{n}^{c},\mathbf{1},\ldots\right)^{*}\\
\\
 & = & \bigwedge_{n\in\kappa}\left(\mathbf{1},\ldots,\mathbf{1},a_{n},\mathbf{1},\ldots\right)=\left(a_{1},\ldots,a_{n},\ldots\right)=A.
\end{array}\label{assa}
\end{equation}
But this is not the case of a general element $\mathbb{A}\in\mathsf{D}_{\kappa}\left(\mathsf{E}\right)$.
In fact, for 
\[
A=\left(a,\hat{\mathbf{1}}\right)\quad\textrm{and}\quad B=\left(b,\hat{\mathbf{1}}\right),\quad\textrm{with}\quad a,b\in\mathsf{E},
\]
we have that
\begin{equation}
\begin{array}{lll}
\left(A\vee B\right)^{**} & = & \left(\left(a,\hat{\mathbf{1}}\right)\vee\left(b,\hat{\mathbf{1}}\right)\right)^{**}=\left(\left(a,\hat{\mathbf{1}}\right)^{*}\wedge\left(b,\hat{\mathbf{1}}\right)^{*}\right)^{*}=\left(\left(a^{c},\hat{\mathbf{1}}\right)\wedge\left(b^{c},\hat{\mathbf{1}}\right)\right)^{*}\\
\\
 & = & \left(a^{c}\cap b^{c},\hat{\mathbf{1}}\right)^{*}=\left(a\cup b,\hat{\mathbf{1}}\right)\neq A\vee B,
\end{array}\label{r4}
\end{equation}
in general. Note that, however, $A\vee B\leq\left(A\vee B\right)^{**}$.
\begin{prop}
For all $\mathbb{A}\in\mathsf{D}_{\kappa}\left(\mathsf{E}\right)$,
we have that $\mathbb{A}\leq\mathbb{A}^{**}$.
\end{prop}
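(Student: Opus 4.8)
The plan is to exploit three facts already in place: that $*$ is order-reversing (Proposition \ref{orev}), that $A^{**}=A$ for every $A\in\mathsf{E}^{\kappa}$ (Eq.~\eqref{assa}), and that every element of $\mathsf{D}_{\kappa}\left(\mathsf{E}\right)$ can be written as a join of elements of $\mathsf{E}^{\kappa}$, as recorded at the end of Section \ref{compss}. Concretely, I would write a generic $\mathbb{A}\in\mathsf{D}_{\kappa}\left(\mathsf{E}\right)$ as $\mathbb{A}=\bigvee_{i\in I}A^{i}$ with $A^{i}\in\mathsf{E}^{\kappa}$ and $I\neq\emptyset$, and reduce the statement to the behaviour of $*$ on these generators.

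First I would observe that, since $\mathbb{A}$ is the least upper bound of $\left\{ A^{i}:i\in I\right\} $, we have $A^{i}\leq\mathbb{A}$ for every $i\in I$. Applying the order-reversing map $*$ (Proposition \ref{orev}) gives $\mathbb{A}^{*}\leq\left(A^{i}\right)^{*}$, and applying $*$ a second time --- again order-reversing --- yields $\left(A^{i}\right)^{**}\leq\mathbb{A}^{**}$ for every $i\in I$.

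The crucial step is then to invoke that each letter $A^{i}$ lies in $\mathsf{E}^{\kappa}$, where the involution identity $\left(A^{i}\right)^{**}=A^{i}$ holds by Eq.~\eqref{assa}. Substituting, we obtain $A^{i}\leq\mathbb{A}^{**}$ for every $i\in I$, so $\mathbb{A}^{**}$ is an upper bound of $\left\{ A^{i}:i\in I\right\} $. Since $\mathbb{A}=\bigvee_{i\in I}A^{i}$ is by construction the least such upper bound, we conclude $\mathbb{A}\leq\mathbb{A}^{**}$, as wanted.

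I do not expect a genuine obstacle: the argument is just a two-fold application of order-reversal combined with the involutivity of $*$ on the ``classical-like'' elements of $\mathsf{E}^{\kappa}$. The only point demanding care is that $A^{**}=A$ is available \emph{only} for elements of $\mathsf{E}^{\kappa}$ and generally fails on $\mathsf{D}_{\kappa}\left(\mathsf{E}\right)$, as Eq.~\eqref{r4} illustrates; it is therefore essential that the reduction passes through the generators $A^{i}$ rather than attempting to apply involutivity directly to $\mathbb{A}$.
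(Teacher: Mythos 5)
Your proof is correct, and it takes a genuinely different route from the paper's. The paper computes $\mathbb{A}^{**}$ explicitly: starting from $\mathbb{A}=\bigvee_{i\in I}A^{i}$ it applies Eqs.~\eqref{dop2} and \eqref{dop4} twice to obtain
\[
\mathbb{A}^{**}={\textstyle \bigwedge_{f\in F}\bigvee_{n\in\kappa}}\left(\mathbf{1},\ldots,\mathbf{1},{\textstyle \bigcup_{i\in f^{-1}\left(n\right)}}a_{n}^{i},\mathbf{1},\ldots\right),
\]
and then verifies by hand that each letter $A^{l}$ sits below every term $\mathbb{B}_{f}$ of the outer meet, hence below $\mathbb{A}^{**}$. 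You reach the same key inequality $A^{i}\leq\mathbb{A}^{**}$ purely abstractly: from $A^{i}\leq\mathbb{A}$, two applications of the order-reversing map $*$ (Proposition~\ref{orev}) give $\left(A^{i}\right)^{**}\leq\mathbb{A}^{**}$, and the involutivity $\left(A^{i}\right)^{**}=A^{i}$ on $\mathsf{E}^{\kappa}$ (Eq.~\eqref{assa}) finishes the reduction. Both proofs then conclude identically, since $\mathbb{A}$ is the least upper bound of its letters. There is no circularity in your argument: Eq.~\eqref{assa} is derived directly from \eqref{dop2} and \eqref{dop4} before this proposition, and order-reversal is Proposition~\ref{orev}. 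What your version buys is brevity and independence from the explicit formula for $*$; what the paper's computation buys is a concrete description of $\mathbb{A}^{**}$ as a meet of joins of rank-one modifications, which makes visible exactly how the closure enlarges $\mathbb{A}$ (compare Eq.~\eqref{r4}). You are also right to flag that involutivity may only be invoked on the generators in $\mathsf{E}^{\kappa}$ and not on $\mathbb{A}$ itself; that is precisely the point of the proposition.
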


\begin{proof}
Given $\mathbb{A}={\textstyle \bigvee_{i\in I}}A^{i}\in\mathsf{D}_{\kappa}\left(\mathsf{E}\right)$,
and using \eqref{dop2} and \eqref{dop4},
\[
\begin{array}{lll}
\left({\textstyle \bigvee_{i\in I}}A^{i}\right)^{**} & = & \left(\left({\textstyle \bigvee_{i\in I}}A^{i}\right)^{*}\right)^{*}=\left(\bigvee_{f\in F}\left(\bigcap_{i\in f^{-1}\left(1\right)}\left(a_{1}^{i}\right)^{c},\ldots,\bigcap_{i\in f^{-1}\left(n\right)}\left(a_{n}^{i}\right)^{c},\ldots\right)\right)^{*}\\
\\
 & = & \bigwedge_{f\in F}\left(\bigcap_{i\in f^{-1}\left(1\right)}\left(a_{1}^{i}\right)^{c},\ldots,\bigcap_{i\in f^{-1}\left(n\right)}\left(a_{n}^{i}\right)^{c},\ldots\right)^{*}\\
\\
 & = & \bigwedge_{f\in F}\bigvee_{n\in\kappa}\left(\mathbf{1},\ldots,\mathbf{1},\bigcup_{i\in f^{-1}\left(n\right)}a_{n}^{i},\mathbf{1},\ldots\right).
\end{array}
\]
Given $l\in I$ and $f\in F$, if $f\left(l\right)=m$, then $a_{m}^{l}\subseteq\bigcup_{i\in f^{-1}\left(m\right)}a_{m}^{i}$
and consequently
\[
A^{l}=\left(a_{1}^{l},\ldots,a_{m}^{l},\ldots\right)\leq\left(\mathbf{1},\ldots,\mathbf{1},\bigcup_{i\in f^{-1}\left(m\right)}a_{m}^{i},\mathbf{1},\ldots\right)\leq\bigvee_{n\in\kappa}\left(\mathbf{1},\ldots,\mathbf{1},\bigcup_{i\in f^{-1}\left(n\right)}a_{n}^{i},\mathbf{1},\ldots\right).
\]
Let us call $\mathbb{B}_{f}$ the last term of above equation. Then,
for each $l\in I$, the element $A^{l}$ is a lower bound of $\left\{ \mathbb{B}_{f}:f\in F\right\} $.
This implies that 
\[
{\textstyle A^{l}\leq\bigwedge_{f\in F}\bigvee_{n\in\kappa}\left(\mathbf{1},\ldots,\mathbf{1},\bigcup_{i\in f^{-1}\left(n\right)}a_{n}^{i},\mathbf{1},\ldots\right),\quad\forall l\in I,}
\]
and as a consequence 
\[
{\textstyle \bigvee_{l\in I}A^{l}\leq\bigwedge_{f\in F}\bigvee_{n\in\kappa}\left(\mathbf{1},\ldots,\mathbf{1},\bigcup_{i\in f^{-1}\left(n\right)}a_{n}^{i},\mathbf{1},\ldots\right)=\left({\textstyle \bigvee_{i\in I}}A^{i}\right)^{**}}
\]
This ends our proof.
\end{proof}
Using that 
\begin{equation}
\mathbb{A}\leq\mathbb{A}^{**},\label{op}
\end{equation}
for all $\mathbb{A}\in\mathsf{D}_{\kappa}\left(\mathsf{E}\right)$,
we have the inequality $\mathbb{A}^{*}\leq\left(\mathbb{A}^{*}\right)^{**}=\mathbb{A}^{***}$.
And using the order reversibility of $*$, from $\mathbb{A}\leq\mathbb{A}^{**}$
we have that $\mathbb{A}^{***}\leq\mathbb{A}^{*}$. Then 
\begin{equation}
\mathbb{A}^{*}=\mathbb{A}^{***}\label{31}
\end{equation}
and, applying $*$ in both members, it follows that 
\begin{equation}
\mathbb{A}^{****}=\mathbb{A}^{**}.\label{idp}
\end{equation}
All this proves the following result.
\begin{thm}
The function $**:\mathsf{D}_{\kappa}\left(\mathsf{E}\right)\rightarrow\mathsf{D}_{\kappa}\left(\mathsf{E}\right)$
is a \textbf{closure}, i.e. $**$ is order-preserving, satisfies \eqref{op}
and it is idempotent (see \eqref{idp}).
\end{thm}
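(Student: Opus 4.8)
The plan is to establish the three defining properties of a closure operator---order-preservation, extensivity, and idempotency---by drawing on the two structural facts already in hand: that $*$ is order-reversing (Proposition \ref{orev}) and that $**$ is extensive, i.e.\ $\mathbb{A}\leq\mathbb{A}^{**}$ for all $\mathbb{A}$ (equation \eqref{op}). The guiding principle is that $*$ behaves like one half of a Galois connection of $\mathsf{D}_{\kappa}\left(\mathsf{E}\right)$ with itself, so that its square is forced to be a closure by a purely formal argument; no new computation with the explicit formula \eqref{dop} is required.

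First I would dispatch order-preservation. Since $*$ reverses order, applying it twice restores it: if $\mathbb{A}\leq\mathbb{B}$, then $\mathbb{B}^{*}\leq\mathbb{A}^{*}$ by Proposition \ref{orev}, and a second application of Proposition \ref{orev} yields $\mathbb{A}^{**}\leq\mathbb{B}^{**}$. Extensivity requires nothing new, since it is exactly the content of \eqref{op}.

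The only step with any substance is idempotency, and here I would exploit the interplay of the two facts. Applying extensivity to the element $\mathbb{A}^{*}$ gives $\mathbb{A}^{*}\leq\left(\mathbb{A}^{*}\right)^{**}=\mathbb{A}^{***}$. On the other hand, applying the order-reversing map $*$ to the extensivity inequality $\mathbb{A}\leq\mathbb{A}^{**}$ gives $\mathbb{A}^{***}\leq\mathbb{A}^{*}$. Combining the two inequalities yields $\mathbb{A}^{*}=\mathbb{A}^{***}$, which is \eqref{31}; applying $*$ once more to both members produces $\mathbb{A}^{**}=\mathbb{A}^{****}$, which is \eqref{idp} and precisely the idempotency of $**$.

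I do not expect a genuine obstacle: the real work was already carried out in proving that $*$ is order-reversing and that $\mathbb{A}\leq\mathbb{A}^{**}$, and the theorem is a formal consequence of those two inputs. The single point demanding care is the bookkeeping of inequality directions---each application of $*$ flips the order, so one must track parity (an even number of applications preserves order, an odd number reverses it) to avoid sign errors when passing among $\mathbb{A}$, $\mathbb{A}^{*}$, $\mathbb{A}^{**}$, and $\mathbb{A}^{***}$.
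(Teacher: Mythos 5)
Your argument is correct and coincides with the paper's own: the paper likewise derives $\mathbb{A}^{*}\leq\mathbb{A}^{***}$ by applying extensivity to $\mathbb{A}^{*}$ and $\mathbb{A}^{***}\leq\mathbb{A}^{*}$ by applying the order-reversing $*$ to $\mathbb{A}\leq\mathbb{A}^{**}$, then applies $*$ once more to get idempotency. No differences worth noting.
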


\subsection{The logic $\mathsf{U}_{\kappa}\left(\mathsf{E}\right)$}

As it is well-known, given a closure $\rho:L\rightarrow L$ on a complete
lattice $L$, its image $\mathsf{Im}\rho\subseteq L$, which can be
described as
\[
\mathsf{Im}\rho=\left\{ x\in L:\rho\left(x\right)=x\right\} ,
\]
together with the inherited order, is also a complete lattice with
the same top and bottom elements (see Ref. \cite{blyth}). Infimum
coincide, i.e. given $S\subseteq\mathsf{Im}\rho$,
\[
{\textstyle \bigwedge_{\mathsf{Im}\rho}S=\bigwedge_{L}S},
\]
but for the supremum we have that
\[
{\textstyle \bigvee_{\mathsf{Im}\rho}S=\rho\left(\bigvee_{L}S\right).}
\]

For the closure operator $**:\mathsf{D}_{\kappa}\left(\mathsf{E}\right)\rightarrow\mathsf{D}_{\kappa}\left(\mathsf{E}\right)$,
besides a complete lattice structure on its image $\mathsf{D}_{\kappa}\left(\mathsf{E}\right)^{**}$,
we also have an orthocomplementation, and it is given by $*$. 
\begin{thm}
\label{pop} The subset 
\[
\mathsf{U}_{\kappa}\left(\mathsf{E}\right)\coloneqq\mathsf{D}_{\kappa}\left(\mathsf{E}\right)^{**}=\left\{ \mathbb{A}\in\mathsf{D}_{\kappa}\left(\mathsf{E}\right):\mathbb{A}^{**}=\mathbb{A}\right\} \subseteq\mathsf{D}_{\kappa}\left(\mathsf{E}\right)
\]
contains $\mathsf{E}^{\kappa}$, it can also be described as
\begin{equation}
\mathsf{U}_{\kappa}\left(\mathsf{E}\right)=\mathsf{D}_{\kappa}\left(\mathsf{E}\right)^{*}=\left\{ \mathbb{A}^{*}:\mathbb{A}\in\mathsf{D}_{\kappa}\left(\mathsf{E}\right)\right\} ,\label{uek}
\end{equation}
and it is equipped with an orthocomplemented complete lattice structure
\[
\left(\mathsf{U}_{\kappa}\left(\mathsf{E}\right),\leq,\amalg,\wedge,*,\hat{\mathbf{0}},\hat{\mathbf{1}}\right),
\]
with supremum
\begin{equation}
\coprod S=\left(\bigvee S\right)^{**},\quad\forall S\subseteq\mathsf{U}_{\kappa}\left(\mathsf{E}\right).\label{defs}
\end{equation}
\end{thm}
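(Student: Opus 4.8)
The plan is to dispatch the three assertions in order, reserving the real work for the verification that $*$ is an orthocomplementation. First, the inclusion $\mathsf{E}^{\kappa}\subseteq\mathsf{U}_{\kappa}\left(\mathsf{E}\right)$ is immediate from Eq. \eqref{assa}, which already records that $A^{**}=A$ for every $A\in\mathsf{E}^{\kappa}$; such an $A$ is therefore a fixed point of $**$. Second, the identity $\mathsf{U}_{\kappa}\left(\mathsf{E}\right)=\mathsf{D}_{\kappa}\left(\mathsf{E}\right)^{*}$ follows from Eq. \eqref{31}: if $\mathbb{A}^{**}=\mathbb{A}$ then $\mathbb{A}=\left(\mathbb{A}^{*}\right)^{*}\in\mathsf{D}_{\kappa}\left(\mathsf{E}\right)^{*}$, while conversely any $\mathbb{A}=\mathbb{B}^{*}$ satisfies $\mathbb{A}^{**}=\mathbb{B}^{***}=\mathbb{B}^{*}=\mathbb{A}$, so $\mathbb{A}\in\mathsf{U}_{\kappa}\left(\mathsf{E}\right)$.

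For the complete lattice structure I would invoke the standard fact recalled just before the statement (from \cite{blyth}): the image of a closure $\rho=**$ on the complete lattice $\mathsf{D}_{\kappa}\left(\mathsf{E}\right)$ is itself a complete lattice with the same $\hat{\mathbf{0}},\hat{\mathbf{1}}$, with meets inherited from $\mathsf{D}_{\kappa}\left(\mathsf{E}\right)$ and with join $\coprod S=\left(\bigvee S\right)^{**}$, which is exactly Eq. \eqref{defs}. It then remains to check that $*$ restricts to an orthocomplementation of this lattice. That $*$ maps $\mathsf{U}_{\kappa}\left(\mathsf{E}\right)$ into itself is Eq. \eqref{31} once more, since $\left(\mathbb{A}^{*}\right)^{**}=\mathbb{A}^{***}=\mathbb{A}^{*}$; that it is involutive there is the defining condition $\mathbb{A}^{**}=\mathbb{A}$; and order-reversal is Proposition \ref{orev}.

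The hard part will be the two complementation laws $\mathbb{A}\wedge\mathbb{A}^{*}=\hat{\mathbf{0}}$ and $\mathbb{A}\amalg\mathbb{A}^{*}=\hat{\mathbf{1}}$. I would establish the first for an arbitrary $\mathbb{A}=\bigvee_{i\in I}A^{i}\in\mathsf{D}_{\kappa}\left(\mathsf{E}\right)$, so that no fixed-point hypothesis is even needed. Writing $\mathbb{A}^{*}=\bigvee_{f\in F}C_{f}$ with $C_{f}=\left(\bigcap_{i\in f^{-1}\left(1\right)}\left(a_{1}^{i}\right)^{c},\ldots,\bigcap_{i\in f^{-1}\left(n\right)}\left(a_{n}^{i}\right)^{c},\ldots\right)$ and $F=\left\{ f:I\rightarrow\kappa\right\}$ as in Eq. \eqref{dor2}, the infimum formula \eqref{finf} expresses $\mathbb{A}\wedge\mathbb{A}^{*}$ as the join $\bigvee_{\left(i,f\right)\in I\times F}\left(A^{i}\cap C_{f}\right)$ of componentwise meets in $\mathsf{E}^{\kappa}$. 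The decisive observation is that for each pair $\left(i,f\right)$, since $i\in f^{-1}\left(f\left(i\right)\right)$, the $f\left(i\right)$-th entry of $A^{i}\cap C_{f}$ is $a_{f\left(i\right)}^{i}\cap\bigcap_{i'\in f^{-1}\left(f\left(i\right)\right)}\left(a_{f\left(i\right)}^{i'}\right)^{c}\subseteq a_{f\left(i\right)}^{i}\cap\left(a_{f\left(i\right)}^{i}\right)^{c}=\mathbf{0}$; hence $A^{i}\cap C_{f}$ has a vanishing component and is identified with $\hat{\mathbf{0}}$, so the whole join collapses to $\hat{\mathbf{0}}$.

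Finally, the dual law for $\mathbb{A}\in\mathsf{U}_{\kappa}\left(\mathsf{E}\right)$ follows formally. Because infima are inherited, the meet is computed identically in $\mathsf{U}_{\kappa}\left(\mathsf{E}\right)$ and $\mathsf{D}_{\kappa}\left(\mathsf{E}\right)$, so the previous step gives $\mathbb{A}\wedge\mathbb{A}^{*}=\hat{\mathbf{0}}$. Then De Morgan \eqref{dop4} together with $\mathbb{A}^{**}=\mathbb{A}$ yields $\left(\mathbb{A}\vee\mathbb{A}^{*}\right)^{*}=\mathbb{A}^{*}\wedge\mathbb{A}^{**}=\mathbb{A}^{*}\wedge\mathbb{A}=\hat{\mathbf{0}}$, whence by \eqref{defs} and \eqref{dop3} we obtain $\mathbb{A}\amalg\mathbb{A}^{*}=\left(\mathbb{A}\vee\mathbb{A}^{*}\right)^{**}=\hat{\mathbf{0}}^{*}=\hat{\mathbf{1}}$. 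Collecting order-reversal, involutivity and these two laws shows $*$ is an orthocomplementation of $\left(\mathsf{U}_{\kappa}\left(\mathsf{E}\right),\leq,\amalg,\wedge,\hat{\mathbf{0}},\hat{\mathbf{1}}\right)$, completing the proof.
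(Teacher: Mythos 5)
Your proposal is correct and follows essentially the same route as the paper: the inclusion via \eqref{assa}, the identity \eqref{uek} via \eqref{31}, the complete lattice structure from the closure $**$, and the join-complement law deduced from the meet-complement law by De Morgan. The only (minor) divergence is in proving $\mathbb{A}\wedge\mathbb{A}^{*}=\hat{\mathbf{0}}$, where you expand the meet directly with the infimum formula \eqref{finf} and observe that every term $A^{i}\cap C_{f}$ has a $\mathbf{0}$ component, whereas the paper first treats $A\in\mathsf{E}^{\kappa}$ and then extends to general $\mathbb{A}$ using complete distributivity; both hinge on the same observation and both are valid.
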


\begin{proof}
We have seen in Eq. \eqref{assa} that $A^{**}=A$ for all $A\in\mathsf{E}^{\kappa}$,
so $\mathsf{E}^{\kappa}\subseteq\mathsf{U}_{\kappa}\left(\mathsf{E}\right)$. 

To show \eqref{uek}, note first that, since for all $\mathbb{A}\in\mathsf{U}_{\kappa}\left(\mathsf{E}\right)$
we have that $\mathbb{A}=\mathbb{A}^{**}=\left(\mathbb{A}^{*}\right)^{*}$,
then $\mathsf{U}_{\kappa}\left(\mathsf{E}\right)\subseteq\mathsf{D}_{\kappa}\left(\mathsf{E}\right)^{*}$.
Secondly, since for all $\mathbb{A}\in\mathsf{D}_{\kappa}\left(\mathsf{E}\right)$
we have that $\mathbb{A}^{*}=\mathbb{A}^{***}=\left(\mathbb{A}^{*}\right)^{**}$,
the other inclusion follows.

Given $\mathbb{A}\in\mathsf{U}_{\kappa}\left(\mathsf{E}\right)\subseteq\mathsf{D}_{\kappa}\left(\mathsf{E}\right)$,
from \eqref{uek} we have that $\mathbb{A}^{*}\in\mathsf{U}_{\kappa}\left(\mathsf{E}\right)$.
Then, $*$ defines a function from $\mathsf{U}_{\kappa}\left(\mathsf{E}\right)$
to $\mathsf{U}_{\kappa}\left(\mathsf{E}\right)$, which is order-reversing
(see Proposition \ref{orev}) and an involution. To show that $*$
is an orthocomplementation on $\mathsf{U}_{\kappa}\left(\mathsf{E}\right)$,
it rests to prove that $\mathbb{A}^{*}$ is a complement of $\mathbb{A}$,
i.e.
\begin{equation}
\mathbb{A}\wedge\mathbb{A}^{*}=\hat{\mathbf{0}}\quad\textrm{and}\quad\mathbb{A}\amalg\mathbb{A}^{*}=\hat{\mathbf{1}},\quad\forall\mathbb{A}\in\mathsf{U}_{\kappa}\left(\mathsf{E}\right).\label{scomp}
\end{equation}
To show the first identity, consider an element $A=\left(a_{1},\ldots,a_{n},\ldots\right)\in\mathsf{E}^{\kappa}$.
Since 
\[
{\textstyle A^{*}=\bigvee_{n\in\kappa}\left(\mathbf{1},\ldots,\mathbf{1},a_{n}^{c},\mathbf{1},\ldots\right)}
\]
and
\[
A\wedge\left(\mathbf{1},\ldots,\mathbf{1},a_{n}^{c},\mathbf{1},\ldots\right)=\left(a_{1},\ldots,a_{n-1},a_{n}\cap a_{n}^{c},a_{n+1},\ldots\right)=\hat{\mathbf{0}},\quad\forall n\in\kappa,
\]
then (using distributivity)
\[
{\textstyle A\wedge A^{*}=\bigvee_{n\in\kappa}A\wedge\left(\mathbf{1},\ldots,\mathbf{1},a_{n}^{c},\mathbf{1},\ldots\right)=\hat{\mathbf{0}}.}
\]
Now, consider an arbitrary element $\mathbb{A}=\bigvee_{i\in I}A^{i}$
of $\mathsf{U}_{\kappa}\left(\mathsf{E}\right)$. The last result
ensures that $A^{i}\wedge\left(A^{i}\right)^{*}=\hat{\mathbf{0}}$,
and consequently 
\[
{\textstyle A^{i}\wedge\bigwedge_{l\in I}\left(A^{l}\right)^{*}=\hat{\mathbf{0}},\quad\forall i\in I.}
\]
Then
\[
{\textstyle \mathbb{A}\wedge\mathbb{A}^{*}=\left(\bigvee_{i\in I}A^{i}\right)\wedge\left(\bigwedge_{l\in I}\left(A^{l}\right)^{*}\right)=\bigvee_{i\in I}\left(A^{i}\wedge\bigwedge_{l\in I}\left(A^{l}\right)^{*}\right)}=\hat{\mathbf{0}}.
\]
That is to say, the first identity of \eqref{scomp} holds. From that,
and using Eqs. \eqref{dop3}, \eqref{dop4} and \eqref{defs}, we
have
\[
\mathbb{A}\amalg\mathbb{A}^{*}=\left(\mathbb{A}\vee\mathbb{A}^{*}\right)^{**}=\left(\mathbb{A}^{*}\wedge\mathbb{A}^{**}\right)^{*}=\left(\mathbb{A}^{*}\wedge\mathbb{A}\right)^{*}=\hat{\mathbf{0}}^{*}=\hat{\mathbf{1}},
\]
what shows the second identity of \eqref{scomp}. 
\end{proof}
We have shown in Proposition \ref{mml} that, for any family $\left\{ \mathbb{A}_{j}:j\in J\right\} \subseteq\mathsf{D}_{\kappa}\left(\mathsf{E}\right)$,
we have \eqref{dop4}. Inside $\mathsf{U}_{\kappa}\left(\mathsf{E}\right)$,
we have also its dual.
\begin{prop}
Given a subset $S=\left\{ \mathbb{A}_{j}:j\in J\right\} \subseteq\mathsf{U}_{\kappa}\left(\mathsf{E}\right)$,
the \textbf{infinite De Morgan's laws} hold, i.e.
\[
{\textstyle \left(\coprod_{j\in J}\mathbb{A}_{j}\right)^{*}=\bigwedge_{j\in J}\mathbb{A}_{j}^{*}\quad\textrm{and}\quad\left(\bigwedge_{j\in J}\mathbb{A}_{j}\right)^{*}=\coprod_{j\in J}\mathbb{A}_{j}^{*}.}
\]
\end{prop}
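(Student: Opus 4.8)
The plan is to deduce both infinite De Morgan's laws from the already-established supremum formula \eqref{defs} together with Proposition \ref{mml} (Eq. \eqref{dop4}) and the involutivity of $*$ on $\mathsf{U}_{\kappa}\left(\mathsf{E}\right)$. The key observation is that inside $\mathsf{U}_{\kappa}\left(\mathsf{E}\right)$ the supremum is not the ambient join $\bigvee$ of $\mathsf{D}_{\kappa}\left(\mathsf{E}\right)$ but its $**$-closure $\coprod$, whereas the infimum is inherited unchanged from $\mathsf{D}_{\kappa}\left(\mathsf{E}\right)$ (as recalled just before Theorem \ref{pop}). So the two laws are genuinely different statements, but each follows from the other by applying $*$ and using $\mathbb{A}^{**}=\mathbb{A}$.

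First I would prove the first identity. Starting from \eqref{defs}, write
\[
{\textstyle \left(\coprod_{j\in J}\mathbb{A}_{j}\right)^{*}=\left(\left(\bigvee_{j\in J}\mathbb{A}_{j}\right)^{**}\right)^{*}=\left(\bigvee_{j\in J}\mathbb{A}_{j}\right)^{***}.}
\]
By Eq. \eqref{31} we have $\mathbb{B}^{***}=\mathbb{B}^{*}$ for every $\mathbb{B}\in\mathsf{D}_{\kappa}\left(\mathsf{E}\right)$, so this equals $\left(\bigvee_{j\in J}\mathbb{A}_{j}\right)^{*}$, and now Eq. \eqref{dop4} gives $\bigwedge_{j\in J}\mathbb{A}_{j}^{*}$. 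Since each $\mathbb{A}_{j}\in\mathsf{U}_{\kappa}\left(\mathsf{E}\right)$, each $\mathbb{A}_{j}^{*}$ lies in $\mathsf{U}_{\kappa}\left(\mathsf{E}\right)$ as well (by \eqref{uek}), and because the infimum in $\mathsf{U}_{\kappa}\left(\mathsf{E}\right)$ coincides with that of $\mathsf{D}_{\kappa}\left(\mathsf{E}\right)$, the meet $\bigwedge_{j\in J}\mathbb{A}_{j}^{*}$ is unambiguous. This establishes the first law.

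For the second law I would exploit that each $\mathbb{A}_{j}$ equals $\mathbb{B}_{j}^{*}$ for $\mathbb{B}_{j}\coloneqq\mathbb{A}_{j}^{*}\in\mathsf{U}_{\kappa}\left(\mathsf{E}\right)$, since $\mathbb{A}_{j}^{**}=\mathbb{A}_{j}$. Applying the (already proved) first law to the family $\left\{\mathbb{B}_{j}\right\}$ and then taking $*$ of both sides yields
\[
{\textstyle \bigwedge_{j\in J}\mathbb{A}_{j}=\bigwedge_{j\in J}\mathbb{B}_{j}^{*}=\left(\coprod_{j\in J}\mathbb{B}_{j}\right)^{*},}
\]
so that $\left(\bigwedge_{j\in J}\mathbb{A}_{j}\right)^{*}=\left(\coprod_{j\in J}\mathbb{B}_{j}\right)^{**}=\coprod_{j\in J}\mathbb{B}_{j}=\coprod_{j\in J}\mathbb{A}_{j}^{*}$, where in the middle step I use that $\coprod_{j\in J}\mathbb{B}_{j}\in\mathsf{U}_{\kappa}\left(\mathsf{E}\right)$ is $**$-fixed. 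I expect no real obstacle here; the only point demanding care is the bookkeeping between the two different suprema $\bigvee$ and $\coprod$ and the fact that $*$ maps $\mathsf{U}_{\kappa}\left(\mathsf{E}\right)$ into itself, so that all expressions are interpreted in the correct lattice. Everything else is a direct chain of the identities \eqref{31}, \eqref{dop4}, \eqref{defs} and $\mathbb{A}^{**}=\mathbb{A}$.
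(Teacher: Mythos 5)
Your proposal is correct and follows essentially the same route as the paper: both prove the first law via $\left(\coprod_{j}\mathbb{A}_{j}\right)^{*}=\left(\bigvee_{j}\mathbb{A}_{j}\right)^{***}=\left(\bigvee_{j}\mathbb{A}_{j}\right)^{*}=\bigwedge_{j}\mathbb{A}_{j}^{*}$ using \eqref{defs}, \eqref{31} and \eqref{dop4}. Your derivation of the second law by substituting $\mathbb{B}_{j}=\mathbb{A}_{j}^{*}$ into the first law is just a repackaging of the paper's direct computation $\bigwedge_{j}\mathbb{A}_{j}=\bigwedge_{j}\mathbb{A}_{j}^{**}=\left(\bigvee_{j}\mathbb{A}_{j}^{*}\right)^{*}$ followed by applying $*$.
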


\begin{proof}
Let us prove the first identity. Since $\coprod_{j\in J}\mathbb{A}_{j}=\left(\bigvee_{j\in J}\mathbb{A}_{j}\right)^{**}$
(see \eqref{defs}), then, using Eq. \eqref{dop4} and \eqref{31},
\[
{\textstyle \left(\coprod_{j\in J}\mathbb{A}_{j}\right)^{*}=\left(\bigvee_{j\in J}\mathbb{A}_{j}\right)^{***}=\left(\bigvee_{j\in J}\mathbb{A}_{j}\right)^{*}=\bigwedge_{j\in J}\mathbb{A}_{j}^{*}.}
\]
For the second identity, note that
\[
{\textstyle \bigwedge_{j\in J}\mathbb{A}_{j}=\bigwedge_{j\in J}\mathbb{A}_{j}^{**}=\left(\bigvee_{j\in J}\mathbb{A}_{j}^{*}\right)^{*}}
\]
(where we have used \eqref{dop4} again). Then, applying $*$ member
to member in above equation,
\[
{\textstyle \left(\bigwedge_{j\in J}\mathbb{A}_{j}\right)^{*}=\left(\bigvee_{j\in J}\mathbb{A}_{j}^{*}\right)^{**}=\coprod_{j\in J}\mathbb{A}_{j}^{*},}
\]
what finishes the proof.
\end{proof}
\begin{rem*}
We know that an arbitrary element of $\mathsf{D}_{\kappa}\left(\mathsf{E}\right)$
can be written as ${\textstyle \bigvee_{i\in I}}A^{i}$, with $I\neq\emptyset$
and $A^{i}\in\mathsf{E}^{\kappa}$ for all $i\in I$. Then, the elements
of $\mathsf{U}_{\kappa}\left(\mathsf{E}\right)$ are given by those
symbols ${\textstyle \bigvee_{i\in I}}A^{i}$ such that
\[
{\textstyle \bigvee_{i\in I}}A^{i}=\left({\textstyle \bigvee_{i\in I}}A^{i}\right)^{**},
\]
i.e. those satisfying
\[
{\textstyle \bigvee_{i\in I}}A^{i}={\textstyle \coprod_{i\in I}}A^{i}.
\]
Thus, an arbitrary element of $\mathsf{U}_{\kappa}\left(\mathsf{E}\right)$
can be written as
\begin{equation}
{\textstyle \coprod_{i\in I}}A^{i},\quad\textrm{with}\quad I\neq\emptyset\quad\textrm{and}\quad A_{i}\in\mathsf{E}^{\kappa},\;\forall i\in I.\label{grep}
\end{equation}
In other words (compare to \eqref{EN} and \eqref{EBN}), 
\[
\mathsf{U}_{\kappa}\left(\mathsf{E}\right)=\left\{ {\textstyle \coprod_{i\in I}}A^{i}\;:\;A_{i}\in\mathsf{E}^{\kappa}\right\} .
\]
\end{rem*}
Recall that we are identifying $\mathsf{E}^{\kappa}$ with its image
$\varphi_{\kappa}\left(\mathsf{E}^{\kappa}\right)\subseteq\mathsf{D}_{\kappa}\left(\mathsf{E}\right)$
\textit{via} the order embedding $\varphi_{\kappa}$ (see Eq. \eqref{emb}).
So, the inclusion $\mathsf{E}^{\kappa}\subseteq\mathsf{U}_{\kappa}\left(\mathsf{E}\right)$
we talked about in Theorem \ref{pop} gives rise to the embedding
\[
\hat{\varphi}_{\kappa}:\mathsf{E}^{\kappa}\hookrightarrow\mathsf{U}_{\kappa}\left(\mathsf{E}\right),
\]
 obtained by the co-restriction of $\varphi_{\kappa}$ to $\mathsf{U}_{\kappa}\left(\mathsf{E}\right)$.
In the $\kappa=\left[1\right]$ case, we have the following result. 
\begin{thm}
\label{uee} $\mathsf{U}_{\left[1\right]}\left(\mathsf{E}\right)$
and $\mathsf{E}$ are isomorphic orthocomplemented complete lattices
via $\hat{\varphi}_{\left[1\right]}$.
\end{thm}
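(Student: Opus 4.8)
The plan is to leverage that $\hat{\varphi}_{[1]}$ is, by construction, the co-restriction to $\mathsf{U}_{[1]}\left(\mathsf{E}\right)$ of the order embedding $\varphi_{[1]}$, so it is already an injective, order-preserving and order-reflecting map of $\mathsf{E}\cong\mathsf{E}^{[1]}$ into $\mathsf{U}_{[1]}\left(\mathsf{E}\right)$. Hence it suffices to establish two things: that $\hat{\varphi}_{[1]}$ is \emph{surjective}, and that it intertwines $^{c}$ with $*$. Everything else (preservation of arbitrary joins and meets, of $\hat{\mathbf{0}}$ and $\hat{\mathbf{1}}$) then comes for free, since a bijective order embedding between complete lattices is automatically a complete-lattice isomorphism.

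The heart of the matter is surjectivity, and the decisive simplification is that for $\kappa=[1]=\left\{ 1\right\}$ the set $F=\left\{ f:I\rightarrow\kappa\right\}$ appearing in the definition \eqref{dop} of $*$ reduces to a single (constant) function, whose fibre over $1$ is all of $I$. I would take an arbitrary element $\bigvee_{i\in I}A^{i}\in\mathsf{D}_{[1]}\left(\mathsf{E}\right)$, write $A^{i}=\left(a^{i}\right)$ with $a^{i}\in\mathsf{E}$, and evaluate \eqref{dop}. Because $\left|F\right|=1$ and $f^{-1}\left(1\right)=I$, formula \eqref{dop} collapses to the single-letter term
\[
\Big(\bigvee_{i\in I}A^{i}\Big)^{*}=\Big[\big({\textstyle \bigcap_{i\in I}}\left(a^{i}\right)^{c}\big)\Big]=\Big[\big(({\textstyle \bigcup_{i\in I}}a^{i})^{c}\big)\Big],
\]
which is manifestly an element of $\mathsf{E}^{[1]}$. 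Thus $\mathsf{D}_{[1]}\left(\mathsf{E}\right)^{*}\subseteq\mathsf{E}^{[1]}$. Invoking the description $\mathsf{U}_{[1]}\left(\mathsf{E}\right)=\mathsf{D}_{[1]}\left(\mathsf{E}\right)^{*}$ from Theorem \ref{pop} (Eq. \eqref{uek}), this yields $\mathsf{U}_{[1]}\left(\mathsf{E}\right)\subseteq\mathsf{E}^{[1]}$; combined with the reverse inclusion $\mathsf{E}^{[1]}\subseteq\mathsf{U}_{[1]}\left(\mathsf{E}\right)$ (also from Theorem \ref{pop}), I conclude $\mathsf{U}_{[1]}\left(\mathsf{E}\right)=\mathsf{E}^{[1]}=\hat{\varphi}_{[1]}\left(\mathsf{E}\right)$, i.e. $\hat{\varphi}_{[1]}$ is onto.

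It then remains only to check that $\hat{\varphi}_{[1]}$ respects the orthocomplementations, which I obtain by specialising the previous computation to a single letter: for $a\in\mathsf{E}$ the set $F$ is a singleton and $\left(a\right)^{*}=\left(a^{c}\right)$, so $\hat{\varphi}_{[1]}\left(a\right)^{*}=\left(a^{c}\right)=\hat{\varphi}_{[1]}\left(a^{c}\right)$. Since $\hat{\varphi}_{[1]}$ is a bijective order embedding it is an order isomorphism of complete lattices, hence preserves $\wedge$, $\amalg$ (indeed all infima and suprema) as well as the bottom $\hat{\mathbf{0}}$ and top $\hat{\mathbf{1}}$; together with the last identity this shows $\hat{\varphi}_{[1]}$ is an isomorphism of orthocomplemented complete lattices. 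I do not anticipate a genuine obstacle here: the only point requiring care is the collapse $\left|F\right|=1$, which is precisely what forces $*$ to send all of $\mathsf{D}_{[1]}\left(\mathsf{E}\right)$ back into $\mathsf{E}^{[1]}$ and thereby trivialises the closure $**$.
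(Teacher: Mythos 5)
Your proposal is correct and follows essentially the same route as the paper: both arguments hinge on the fact that for $\kappa=\left[1\right]$ the formula for $*$ collapses (the set $F$ is a singleton with $f^{-1}\left(1\right)=I$), so that $*$ sends every element of $\mathsf{D}_{\left[1\right]}\left(\mathsf{E}\right)$ to the single letter $\bigcap_{i\in I}\left(a^{i}\right)^{c}\in\mathsf{E}^{\left[1\right]}$, together with the identity $a^{*}=a^{c}$. The only cosmetic difference is that you deduce surjectivity from the description $\mathsf{U}_{\left[1\right]}\left(\mathsf{E}\right)=\mathsf{D}_{\left[1\right]}\left(\mathsf{E}\right)^{*}$ of Theorem \ref{pop}, whereas the paper computes $\coprod_{i\in I}a^{i}=\left(\bigcap_{i\in I}\left(a^{i}\right)^{c}\right)^{c}=\bigcup_{i\in I}a^{i}$ directly, thereby exhibiting an explicit preimage.
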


\begin{proof}
Let us show that the embedding 
\[
\hat{\varphi}_{\left[1\right]}:a\in\mathsf{E}\mapsto a\in\mathsf{U}_{\left[1\right]}\left(\mathsf{E}\right)
\]
is an isomorphism of orthocomplemented lattices. Since (by the very
definition of $*$) $a^{*}=a^{c}$, then $\hat{\varphi}_{\left[1\right]}$
respects the involved orthocomplementations. To end the proof, it
would be enough to show that $\hat{\varphi}_{\left[1\right]}$ is
surjective. According to the last remark (see Eq. \eqref{grep}),
the elements of $\mathsf{U}_{\left[1\right]}\left(\mathsf{E}\right)$
can be written as
\[
{\textstyle \coprod_{i\in I}}a^{i},\quad\textrm{with}\quad I\neq\emptyset\quad\textrm{and}\quad a_{i}\in\mathsf{E},\;\forall i\in I.
\]
But
\[
\begin{array}{lll}
{\textstyle \coprod_{i\in I}}a^{i} & = & \left({\textstyle \bigvee_{i\in I}}a^{i}\right)^{**}=\left({\textstyle \bigwedge_{i\in I}}\left(a^{i}\right)^{*}\right)^{*}=\left({\textstyle \bigwedge_{i\in I}}\left(a^{i}\right)^{c}\right)^{*}=\left[{\textstyle \bigcap_{i\in I}}\left(a^{i}\right)^{c}\right]^{*}\\
\\
 & = & \left({\textstyle \bigcap_{i\in I}}\left(a^{i}\right)^{c}\right)^{c}={\textstyle \bigcup_{i\in I}}a^{i}=\hat{\varphi}_{\left[1\right]}\left({\textstyle \bigcup_{i\in I}}a^{i}\right),
\end{array}
\]
as we wanted to show.
\end{proof}
So far, we have associated to every countable cardinal $\kappa$ and
every repeatable experiment, with event space $\left(\mathsf{E},\subseteq,\cup,\cap,^{c},\mathbf{0},\mathbf{1}\right)$,
the bounded lattice $\left(\mathsf{E}^{\kappa},\subseteq,\cup,\cap,\hat{\mathbf{0}},\hat{\mathbf{1}}\right)$
(with the component-wise operations), the logic $\left(\mathsf{U}_{\kappa}\left(\mathsf{E}\right),\leq,\amalg,\wedge,*,\hat{\mathbf{0}},\hat{\mathbf{1}}\right)$
and the order-embeddings $\hat{\varphi}_{\kappa}:\mathsf{E}^{\kappa}\hookrightarrow\mathsf{U}_{\kappa}\left(\mathsf{E}\right)$.
This has been done in such a way that, when $\kappa=\left[1\right]$,
we have that $\mathsf{U}_{\left[1\right]}\left(\mathsf{E}\right)\simeq\mathsf{E}$
(see Theorem \ref{uee}). Each element $A=\left(a_{1},\ldots,a_{n},\ldots\right)\in\mathsf{E}^{\kappa}\subseteq\mathsf{U}_{\kappa}\left(\mathsf{E}\right)$
(\textit{via} usual identifications) represents the event of the $\kappa$-times
repeated experiment: ``$a_{n}$ occurs in the $n$-th repetition
of the original experiment, for all $n\in\kappa$''. And an arbitrary
element ${\textstyle \coprod_{i\in I}}A^{i}$ of $\mathsf{U}_{\kappa}\left(\mathsf{E}\right)$
(see Eq. \eqref{grep}) represents the event: ``$A^{i}\in\mathsf{E}^{\kappa}$
occurs for some $i$.'' The operations $\amalg$, $\wedge$ and $*$
represent the OR, AND and NOT logic operations, respectively. 

We think that any reasonable definition of event space for repeated
experiments, related to a general logic $\mathsf{E}$, must be ``a
quotient of'' $\mathsf{U}_{\kappa}\left(\mathsf{E}\right)$. More
precisely, we shall consider the next definition. 
\begin{defn}
Given a repeatable experiment with event space $\mathsf{E}$, we shall
call $\mathsf{U}_{\kappa}\left(\mathsf{E}\right)$ the \textbf{universal
logic of the $\kappa$-times repeated experiment}, or simply the $\kappa$-\textbf{th}
\textbf{universal logic} for $\mathsf{E}$. And we shall call \textbf{concrete
event space of the} \textbf{$\kappa$-times repeated experiment} to
every logic $\mathsf{F}_{\kappa}$ for which there exists an epimorphism
of orthocomplemented lattices
\[
\mathsf{U}_{\kappa}\left(\mathsf{E}\right)\twoheadrightarrow\mathsf{F}_{\kappa}.
\]
For $\kappa=\mathbb{N}$, we shall write $\mathsf{U}_{\mathbb{N}}\left(\mathsf{E}\right)=\mathsf{U}\left(\mathsf{E}\right)$.
\end{defn}

In Section \ref{cle}, given a classical experiment with classical
logic $\mathsf{E}$ (a complete Boolean algebra), we have seen that
a good candidate for the event space of the \textbf{$\kappa$}-times
repeated experiment is the complete algebra generated by $\mathsf{E}^{\kappa}$,
i.e. the (concrete) logic $\left\langle \mathsf{E}^{\kappa}\right\rangle $.
We shall show in the next Section that the assignment
\[
{\textstyle \coprod_{i\in I}}A^{i}\in\mathsf{U}_{\kappa}\left(\mathsf{E}\right)\longmapsto{\textstyle \bigcup_{i\in I}}A^{i}\in\left\langle \mathsf{E}^{\kappa}\right\rangle 
\]
(where $\cup$ denotes the set union in the power-set of $\mathsf{E}^{\kappa}$)
defines a lattice epimorphism. This justifies the second part of above
definition.

\subsection{The epimorphism $\mathsf{U}_{\kappa}\left(\mathsf{E}\right)\twoheadrightarrow\left\langle \mathsf{E}^{\kappa}\right\rangle $}

Let $\mathsf{E}\subseteq2^{S}$ be a classical logic given by subsets
of $S$ and consider the related classical logic $\left\langle \mathsf{E}^{\kappa}\right\rangle \subseteq2^{S^{\kappa}}$
constructed in Section \ref{cle}. As in that section, by $\subseteq$,
$\cup$ and $\cap$ we shall denote the inclusion, union and intersection
of subsets in $S^{\kappa}$. Consider also the function
\[
\mathfrak{e}_{\kappa}:\left[{\textstyle \bigsqcup_{i\in I}}A^{i}\right]\in\mathsf{D}_{\kappa}\left(\mathsf{E}\right)\longmapsto{\textstyle \bigcup_{i\in I}}A^{i}\in\left\langle \mathsf{E}^{\kappa}\right\rangle .
\]
Note that
\begin{equation}
\mathfrak{e}_{\kappa}\left(\left[A\right]\right)=A,\quad\forall A\in\mathsf{E}^{\kappa}.\label{eka}
\end{equation}

\begin{thm}
Each function $\mathfrak{e}_{\kappa}$ defines, by restriction, an
epimorphism $\mathfrak{e}_{\kappa}:\mathsf{U}_{\kappa}\left(\mathsf{E}\right)\twoheadrightarrow\left\langle \mathsf{E}^{\kappa}\right\rangle $
of orthocomplemented complete lattices.
\end{thm}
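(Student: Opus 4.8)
The plan is to first establish two structural properties of $\mathfrak{e}_\kappa$ as a map defined on all of $\mathsf{D}_\kappa(\mathsf{E})$ --- that it preserves arbitrary joins and that it intertwines the negation $*$ with set-complementation --- and then to transfer everything to the subalgebra $\mathsf{U}_\kappa(\mathsf{E})$ by exploiting that $**$ is a closure whose image carries the orthocomplemented complete lattice structure of Theorem \ref{pop}.

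First I would check that $\mathfrak{e}_\kappa$ is well defined: if $[\bigsqcup_{i\in I}A^i]=[\bigsqcup_{j\in J}B^j]$, then by the characterization of $\backsim$ (points $1$ and $2$ of Subsection \ref{tap}) every $A^i$ lies below some $B^j$ in the component-wise order, which, under the identification of $\mathsf{E}^\kappa$ with cartesian products inside $2^{S^\kappa}$, is exactly set inclusion; hence $\bigcup_i A^i\subseteq\bigcup_j B^j$, and by symmetry equality holds. Join preservation on $\mathsf{D}_\kappa(\mathsf{E})$ is then immediate from the description of $\bigvee$ as concatenation of terms (Eq. \eqref{ji}), since $\mathfrak{e}_\kappa\big(\bigvee_{j}[\bigsqcup_{i\in I_j}A_j^i]\big)=\bigcup_{(i,j)}A_j^i=\bigcup_j\mathfrak{e}_\kappa\big([\bigsqcup_{i}A_j^i]\big)$.

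The central step is to prove $\mathfrak{e}_\kappa(\mathbb{A}^{*})=\big(\mathfrak{e}_\kappa(\mathbb{A})\big)^{c}$ for every $\mathbb{A}\in\mathsf{D}_\kappa(\mathsf{E})$, where ${}^{c}$ is set-complementation in $2^{S^\kappa}$. Writing $\mathbb{A}=[\bigsqcup_{i\in I}A^i]$ and using the explicit formula \eqref{dor2}, join preservation, and \eqref{eka}, one computes $\mathfrak{e}_\kappa(\mathbb{A}^{*})=\bigcup_{f\in F}\big(\bigcap_{i\in f^{-1}(1)}(a_1^i)^c\big)\times\cdots$. Because $\mathsf{E}\subseteq 2^{S}$ is a Boolean algebra of subsets, its meet $\cap$ and orthocomplement ${}^{c}$ are genuine set intersection and complement, so this expression coincides term-for-term with the classical complement formula \eqref{fe2} (respectively \eqref{fen} when $\kappa=\mathbb{N}$), that is, with $\big(\bigcup_{i\in I}A^i\big)^{c}=\big(\mathfrak{e}_\kappa(\mathbb{A})\big)^{c}$. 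I expect this to be the main obstacle, though only in a bookkeeping sense: the negation $*$ was defined precisely so as to reproduce \eqref{fe2}/\eqref{fen}, so the task is to match the two formulas rather than to carry out a fresh computation.

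With these two facts I would then work inside $\mathsf{U}_\kappa(\mathsf{E})$. Applying the intertwining twice and using that ${}^{c}$ is an involution yields $\mathfrak{e}_\kappa(\mathbb{A}^{**})=\mathfrak{e}_\kappa(\mathbb{A})$, so $\mathfrak{e}_\kappa$ factors through the closure $**$. Surjectivity follows: for $\bigcup_i A^i\in\langle\mathsf{E}^\kappa\rangle$ the element $\coprod_i A^i=(\bigvee_i A^i)^{**}\in\mathsf{U}_\kappa(\mathsf{E})$ satisfies $\mathfrak{e}_\kappa(\coprod_i A^i)=\mathfrak{e}_\kappa\big((\bigvee_i A^i)^{**}\big)=\mathfrak{e}_\kappa(\bigvee_i A^i)=\bigcup_i A^i$. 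Preservation of $*$ is the central step itself. Preservation of $\amalg$ uses \eqref{defs}: $\mathfrak{e}_\kappa(\coprod_j\mathbb{A}_j)=\mathfrak{e}_\kappa\big((\bigvee_j\mathbb{A}_j)^{**}\big)=\mathfrak{e}_\kappa(\bigvee_j\mathbb{A}_j)=\bigcup_j\mathfrak{e}_\kappa(\mathbb{A}_j)$. Finally, preservation of $\wedge$ (which, for $\mathsf{U}_\kappa(\mathsf{E})$, is the same operation as in $\mathsf{D}_\kappa(\mathsf{E})$) follows from the infinite De Morgan law, the $*$--${}^{c}$ intertwining, the already-proven $\amalg$-preservation, and Boolean De Morgan in $\langle\mathsf{E}^\kappa\rangle$, via $\mathfrak{e}_\kappa(\bigwedge_j\mathbb{A}_j)=\big(\mathfrak{e}_\kappa(\coprod_j\mathbb{A}_j^{*})\big)^{c}=\big(\bigcup_j(\mathfrak{e}_\kappa\mathbb{A}_j)^c\big)^c=\bigcap_j\mathfrak{e}_\kappa(\mathbb{A}_j)$. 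Preservation of $\hat{\mathbf 0}$ and $\hat{\mathbf 1}$ is then automatic, which completes the verification that $\mathfrak{e}_\kappa$ is an epimorphism of orthocomplemented complete lattices.
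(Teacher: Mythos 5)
Your proposal is correct and follows essentially the same route as the paper: the core step in both is matching the explicit formula \eqref{dop}/\eqref{dor2} for $*$ against the classical complement formulas \eqref{fe2}/\eqref{fen} to get $\mathfrak{e}_{\kappa}\left(\mathbb{A}^{*}\right)=\left(\mathfrak{e}_{\kappa}\left(\mathbb{A}\right)\right)^{c}$, after which join preservation, surjectivity and the meet property all follow from the closure $**$ and De Morgan exactly as you describe. The only cosmetic difference is that you establish surjectivity of the restriction by exhibiting $\coprod_{i}A^{i}$ as an explicit preimage of $\bigcup_{i}A^{i}$, whereas the paper argues it via $\mathsf{U}_{\kappa}\left(\mathsf{E}\right)=\mathsf{D}_{\kappa}\left(\mathsf{E}\right)^{*}$ and the involutivity of $^{c}$; these are trivially equivalent.
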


\begin{proof}
It is easy to see that $\mathfrak{e}_{\kappa}:\mathsf{D}_{\kappa}\left(\mathsf{E}\right)\rightarrow\left\langle \mathsf{E}^{\kappa}\right\rangle $
is well-defined and surjective (since $\left\langle \mathsf{E}^{\kappa}\right\rangle $
is given precisely by the unions of the elements in $\mathsf{E}^{\kappa}$).
On the other hand, since (see Eq. \eqref{dop})
\[
{\textstyle \left[\bigsqcup_{i\in I}A^{i}\right]^{*}=\left[{\textstyle \bigsqcup_{f\in F}}\left(\bigcap_{i\in f^{-1}\left(1\right)}\left(a_{1}^{i}\right)^{c},\ldots,\bigcap_{i\in f^{-1}\left(n\right)}\left(a_{n}^{i}\right)^{c},\ldots\right)\right]},
\]
then (according to \eqref{fe2} and \eqref{fen})
\[
{\textstyle {\textstyle \mathfrak{e}_{\kappa}\left(\left[\bigsqcup_{i\in I}A^{i}\right]^{*}\right)}={\textstyle \bigcup_{f\in F}}\left(\bigcap_{i\in f^{-1}\left(1\right)}\left(a_{1}^{i}\right)^{c},\ldots,\bigcap_{i\in f^{-1}\left(n\right)}\left(a_{n}^{i}\right)^{c},\ldots\right)=\left(\bigcup_{i\in I}A^{i}\right)^{c}},
\]
i.e. 
\begin{equation}
{\textstyle \mathfrak{e}_{\kappa}\left(\mathbb{A}^{*}\right)}={\textstyle \left(\mathfrak{e}_{\kappa}\left(\mathbb{A}\right)\right)^{c},\quad\forall\mathbb{A}\in\mathsf{D}_{\kappa}\left(\mathsf{E}\right)}.\label{eks}
\end{equation}
So, using \eqref{uek} and the fact that $^{c}$ is an involution
on $\left\langle \mathsf{E}^{\kappa}\right\rangle $ (and, as a consequence,
any element of $\left\langle \mathsf{E}^{\kappa}\right\rangle $ is
the complement of another element of $\left\langle \mathsf{E}^{\kappa}\right\rangle $),
we have that $\mathfrak{e}_{\kappa}$ restricted $\mathsf{U}_{\kappa}\left(\mathsf{E}\right)$
is also surjective.

So far, we have a surjective map $\mathsf{U}_{\kappa}\left(\mathsf{E}\right)\twoheadrightarrow\left\langle \mathsf{E}^{\kappa}\right\rangle $
that respects orthocomplementation. It rests to show that it is a
join homomorphism.\footnote{The meet homomorphism property will follow from the De Morgan laws. }
Given $\coprod_{i\in I}A^{i}\in\mathsf{U}_{\kappa}\left(\mathsf{E}\right)$,
and using the last equation,
\[
{\textstyle \mathfrak{e}_{\kappa}\left(\coprod_{i\in I}A^{i}\right)=\mathfrak{e}_{\kappa}\left(\left(\bigvee_{i\in I}A^{i}\right)^{**}\right)=\left(\mathfrak{e}_{\kappa}\left(\bigvee_{i\in I}A^{i}\right)\right)^{cc}=\mathfrak{e}_{\kappa}\left(\bigvee_{i\in I}A^{i}\right)={\textstyle \bigcup_{i\in I}}A^{i}}.
\]
And given a subset $S=\left\{ \mathbb{A}_{j}\in\mathsf{U}_{\kappa}\left(\mathsf{E}\right):j\in J\right\} $,
with $\mathbb{A}_{j}=\coprod_{i\in I_{j}}A_{j}^{i}$,
\begin{equation}
\begin{array}{lll}
\mathfrak{e}_{\kappa}\left(\coprod_{j\in J}\mathbb{A}_{j}\right) & = & \mathfrak{e}_{\kappa}\left(\coprod_{j\in J}\coprod_{i\in I_{j}}A_{j}^{i}\right)=\bigcup_{j\in J}\bigcup_{i\in I_{j}}A_{j}^{i}=\bigcup_{j\in J}\mathfrak{e}_{\kappa}\left(\coprod_{i\in I_{j}}A_{j}^{i}\right)\\
\\
 & = & \bigcup_{j\in J}\mathfrak{e}_{\kappa}\left(\mathbb{A}_{j}\right),
\end{array}\label{eku}
\end{equation}
what ends our proof. 
\end{proof}

\section{Distributivity of $\mathsf{U}_{\kappa}\left(\mathsf{E}\right)$}

\label{doue} In this section, we shall show the following theorem.
\begin{thm}
If $\mathsf{E}$ is distributive, then each universal logic $\mathsf{U}_{\kappa}\left(\mathsf{E}\right)$
is distributive, for all $\kappa$. Reciprocally, if $\mathsf{U}_{\kappa}\left(\mathsf{E}\right)$
is distributive for some $\kappa$, then $\mathsf{E}$ is distributive.
\end{thm}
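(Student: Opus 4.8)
The plan is to prove the two implications separately. The reverse one only requires that $\mathsf{E}$ sits inside $\mathsf{U}_{\kappa}(\mathsf{E})$ as a sub-orthocomplemented lattice, so I would dispatch it first; the real work lies in the forward implication, which I would reduce to showing that the epimorphism $\mathfrak{e}_{\kappa}$ of the previous subsection is in fact an \emph{isomorphism} when $\mathsf{E}$ is distributive.

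For the reverse implication, assume $\mathsf{U}_{\kappa}(\mathsf{E})$ is distributive and consider the map $\iota\colon a\in\mathsf{E}\mapsto(a,\mathbf{1},\mathbf{1},\ldots)\in\mathsf{E}^{\kappa}\subseteq\mathsf{U}_{\kappa}(\mathsf{E})$, i.e.\ the corestriction of $\hat{\varphi}_{\kappa}$ to these particular tuples. It is injective and order preserving; by \eqref{vu} it respects meets, $\iota(a)\wedge\iota(b)=\iota(a\cap b)$, and by \eqref{dop2} one gets $\iota(a)^{*}=\iota(a^{c})$, whence $\iota(a)\amalg\iota(b)=\left(\iota(a)^{*}\wedge\iota(b)^{*}\right)^{*}=\iota\big((a^{c}\cap b^{c})^{c}\big)=\iota(a\cup b)$, the last equality being the De Morgan law valid in the ortholattice $\mathsf{E}$. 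Thus $\iota(\mathsf{E})$ is a sub-orthocomplemented lattice of $\mathsf{U}_{\kappa}(\mathsf{E})$ isomorphic to $\mathsf{E}$, and since distributivity is inherited by sublattices, $\mathsf{E}$ is distributive. This argument is uniform in $\kappa$, so distributivity of a single $\mathsf{U}_{\kappa}(\mathsf{E})$ is enough.

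For the forward implication, assume $\mathsf{E}$ is distributive; being a complete orthocomplemented distributive lattice it is a complete Boolean algebra, realized as a complete field of sets $\mathsf{E}\subseteq2^{S}$ as in Section \ref{cle}. I would prove that $\mathfrak{e}_{\kappa}\colon\mathsf{U}_{\kappa}(\mathsf{E})\to\left\langle \mathsf{E}^{\kappa}\right\rangle $ is injective; together with the surjectivity and the morphism properties already established, this makes it an isomorphism, and since $\left\langle \mathsf{E}^{\kappa}\right\rangle $ is a field of sets it is distributive, hence so is $\mathsf{U}_{\kappa}(\mathsf{E})$. Using $\coprod_{i}A^{i}\leq\mathbb{B}\Leftrightarrow A^{i}\leq\mathbb{B}$ for all $i$, injectivity reduces to the key lemma: for $A=(a_{1},\ldots)\in\mathsf{E}^{\kappa}$ and $\mathbb{B}=\coprod_{j\in J}B^{j}\in\mathsf{U}_{\kappa}(\mathsf{E})$,
\[
A\leq\mathbb{B}\ \text{ in }\ \mathsf{U}_{\kappa}(\mathsf{E})\quad\Longleftrightarrow\quad A\subseteq{\textstyle\bigcup_{j\in J}}B^{j}\ \text{ as subsets of }\ S^{\kappa}.
\]
The implication $(\Rightarrow)$ is immediate from monotonicity of $\mathfrak{e}_{\kappa}$ and $\mathfrak{e}_{\kappa}([A])=A$.

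The hard part, and where I expect the main obstacle, is $(\Leftarrow)$. I would rewrite $A\leq\mathbb{B}$ as $\mathbb{B}^{*}\leq A^{*}$ (both being $*$-closed), expand both sides via \eqref{dop} and \eqref{dop2}, and apply the criterion of Proposition \ref{basaa}: after complementing and using De Morgan together with the \emph{completeness} of the field $\mathsf{E}$ (so that the lattice joins $\bigvee_{j\in f^{-1}(n)}b_{n}^{j}$ are genuine set unions), $\mathbb{B}^{*}\leq A^{*}$ becomes the combinatorial condition that \emph{for every choice function $f\colon J\to\kappa$ there is an $n\in\kappa$ with $a_{n}\subseteq\bigcup_{j\in f^{-1}(n)}b_{n}^{j}$}. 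To derive this from $A\subseteq\bigcup_{j}B^{j}$ I would argue by contradiction, picking a point $s_{n}\in a_{n}\setminus\bigcup_{j\in f^{-1}(n)}b_{n}^{j}$ for each $n$, assembling $s=(s_{n})_{n}\in A$, choosing $j_{0}$ with $s\in B^{j_{0}}$, and reaching a contradiction at coordinate $n_{0}=f(j_{0})$, since $j_{0}\in f^{-1}(n_{0})$ forces $s_{n_{0}}\in b_{n_{0}}^{j_{0}}\subseteq\bigcup_{j\in f^{-1}(n_{0})}b_{n_{0}}^{j}$. This is the same choice-function/point-separation mechanism used in Section \ref{cle} for the complement formula; its only delicate feature is that $f^{-1}(n)$ may be infinite, which is exactly why $\mathsf{E}$ must be a complete field of sets for joins to coincide with unions.
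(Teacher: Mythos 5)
Your reverse implication is correct and is essentially the paper's own argument: both of you restrict to the tuples $\left(x,\hat{\mathbf{1}}\right)$, compute $\amalg$ and $\wedge$ there via \eqref{vu} and \eqref{dop2}, and conclude that $\mathsf{E}$ sits inside $\mathsf{U}_{\kappa}\left(\mathsf{E}\right)$ as a sublattice, so distributivity passes down.

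The forward implication, however, has a genuine gap at its very first step. You assert that a complete orthocomplemented distributive lattice, being a complete Boolean algebra, is ``realized as a complete field of sets $\mathsf{E}\subseteq2^{S}$ as in Section \ref{cle}.'' This is false in general. A complete field of sets (closed under arbitrary unions, intersections and complements, with the lattice operations the set-theoretic ones) is automatically atomic and completely distributive; a complete Boolean algebra need not be either. The regular open algebra of $\mathbb{R}$ and the Lebesgue measure algebra are atomless complete Boolean algebras: they satisfy the hypothesis ``$\mathsf{E}$ is distributive'' but admit no representation in which infinite joins are unions --- Stone's theorem preserves only the finite operations, and infinite joins become regularized unions. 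Your injectivity argument is irreparably tied to this representation: the step where you pick points $s_{n}\in a_{n}\setminus\bigcup_{j\in f^{-1}\left(n\right)}b_{n}^{j}$ and assemble $s=\left(s_{n}\right)\in A$ is a point-separation (i.e.\ atom) argument and cannot be run in an atomless algebra. You yourself flag that you need ``the lattice joins $\bigvee_{j\in f^{-1}\left(n\right)}b_{n}^{j}$ to be genuine set unions''; that is exactly the hypothesis of complete distributivity, which is strictly stronger than what the theorem assumes. So your proof covers only the atomic (equivalently completely distributive) case. Within that case the combinatorial part is fine, and in fact proves the stronger statement that $\mathfrak{e}_{\kappa}$ is an isomorphism --- a nice observation, but not the theorem.

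The paper's proof avoids any representation: it shows that the \emph{binary} distributive law in $\mathsf{E}$ (used only in the identity $a_{n}^{i}\cap\left(\left(a_{n}^{i}\right)^{c}\cup\left(b_{n}^{j}\right)^{c}\right)=a_{n}^{i}\cap\left(b_{n}^{j}\right)^{c}$, Eq.~\eqref{l0j}) makes $*$ a pseudo-complement on $\mathsf{D}_{\kappa}\left(\mathsf{E}\right)$ (Proposition \ref{prop23}), and then invokes the general order-theoretic fact that the skeleton $\left\{ x:x^{\star\star}=x\right\} $ of a complete $p$-algebra is a distributive lattice. To repair your argument you would need either to adopt that route or to find a representation-free substitute for the point-picking lemma.
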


To do that, we need the next concept. A bounded lattice $\left(\mathsf{L},\leq,\vee,\wedge,0,1\right)$
is said to be a $p$-\textbf{algebra} if there exists a unary operator
\[
\star:\mathsf{L}\rightarrow\mathsf{L}:x\mapsto x^{\star},
\]
 called pseudo-complement, such that:
\begin{itemize}
\item $x\wedge\left(x\wedge y\right)^{\star}=x\wedge y^{\star}$;
\item $x\wedge0^{\star}=x$;
\item $0^{\star\star}=0$.
\end{itemize}
(If such an operator exists, then it is unique). Suppose that $\mathsf{L}$
is complete. What is important for us is the following fact (see Ref.
\cite{blyth} for details): on the sub-poset 
\[
\mathsf{S}\left(\mathsf{L}\right)\coloneqq\left\{ x\in\mathsf{L}:x^{\star\star}=x\right\} ,
\]
the \textit{skeleton} of $\mathsf{L}$, we have an orthocomplemented
complete lattice structure with the same bottom and top elements,
the same infimum, i.e.
\[
{\textstyle \bigwedge_{\mathsf{S}\left(\mathsf{L}\right)}S=\bigwedge S,}\quad\forall S\subseteq\mathsf{S}\left(\mathsf{L}\right),
\]
supremum given by

\[
{\textstyle \coprod S\coloneqq\bigvee_{\mathsf{S}\left(\mathsf{L}\right)}S=\left(\bigvee_{L}S\right)^{\star\star},\quad\forall S\subseteq\mathsf{S}\left(\mathsf{L}\right),}
\]
and orthocomplementation given by the restriction of $\star$ to $\mathsf{S}\left(\mathsf{L}\right)$
(this is because $\star\star$ is a closure for $\mathsf{L}$). And
what is even more important is that, such a structure:
\[
\left(\mathsf{S}\left(\mathsf{L}\right)\leq,\amalg,\wedge,\star,0,1\right),
\]
is a distributive lattice, and it satisfies the infinite distributive
law
\begin{equation}
{\textstyle z\wedge\left(\coprod_{i\in I}x_{i}\right)=\coprod_{i\in I}\left(z\wedge x_{i}\right)}.\label{idl}
\end{equation}
(The proof given in \cite{blyth} is for finite sets $I$, but it
can be easily extended to arbitrary sets in the complete case).

\subsection{Proof of the direct result}

Fix a cardinal $\kappa$ and consider the distributive lattice $\mathsf{D}_{\kappa}\left(\mathsf{E}\right)$. 
\begin{prop}
\label{prop23} If $\mathsf{E}$ is distributive, then $*$ is a pseudo-complement
for $\mathsf{D}_{\kappa}\left(\mathsf{E}\right)$.
\end{prop}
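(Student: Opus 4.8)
The plan is to show that $*$ is the pseudo-complement of $\mathsf{D}_{\kappa}(\mathsf{E})$ by first establishing the Galois-type characterization
\[
\mathbb{A}\wedge\mathbb{B}=\hat{\mathbf{0}}\quad\Longleftrightarrow\quad\mathbb{B}\leq\mathbb{A}^{*},\qquad\forall\,\mathbb{A},\mathbb{B}\in\mathsf{D}_{\kappa}(\mathsf{E}),
\]
which says exactly that $\mathbb{A}^{*}$ is the largest element disjoint from $\mathbb{A}$; from this the three defining identities follow routinely (last paragraph). Two facts are already at hand: $*$ is order-reversing and converts joins into meets (Proposition \ref{orev} and \eqref{dop4}); and the computation carried out in Theorem \ref{pop} uses only \eqref{dop4} together with the complete distributivity of $\mathsf{D}_{\kappa}(\mathsf{E})$, so it in fact yields $\mathbb{A}\wedge\mathbb{A}^{*}=\hat{\mathbf{0}}$ for \emph{every} $\mathbb{A}\in\mathsf{D}_{\kappa}(\mathsf{E})$, not merely those in $\mathsf{U}_{\kappa}(\mathsf{E})$.

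The implication $\Leftarrow$ is immediate and uses no hypothesis: if $\mathbb{B}\leq\mathbb{A}^{*}$ then $\mathbb{A}\wedge\mathbb{B}\leq\mathbb{A}\wedge\mathbb{A}^{*}=\hat{\mathbf{0}}$. For $\Rightarrow$ I would first reduce to the case $\mathbb{A}=A\in\mathsf{E}^{\kappa}$. Writing $\mathbb{A}=\bigvee_{i\in I}A^{i}$ with $A^{i}\in\mathsf{E}^{\kappa}$, complete distributivity gives $\mathbb{A}\wedge\mathbb{B}=\bigvee_{i}(A^{i}\wedge\mathbb{B})$; since a join in $\mathsf{D}_{\kappa}(\mathsf{E})$ equals $\hat{\mathbf{0}}$ only when every joinand does (the class $[\hat{\mathbf{0}}]$ consists of terms all of whose letters are $\hat{\mathbf{0}}$), this forces each $A^{i}\wedge\mathbb{B}=\hat{\mathbf{0}}$; and by \eqref{dop4}, $\mathbb{B}\leq\mathbb{A}^{*}=\bigwedge_{i}(A^{i})^{*}$ iff $\mathbb{B}\leq(A^{i})^{*}$ for all $i$. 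Hence it suffices to prove $A\wedge\mathbb{B}=\hat{\mathbf{0}}\Rightarrow\mathbb{B}\leq A^{*}$ for a single $A\in\mathsf{E}^{\kappa}$.

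For this core step, write $\mathbb{B}=\bigvee_{j\in J}C^{j}$ with $C^{j}=(c_{1}^{j},\ldots,c_{n}^{j},\ldots)\in\mathsf{E}^{\kappa}$. Again by complete distributivity $A\wedge\mathbb{B}=\bigvee_{j}(A\wedge C^{j})$, and by \eqref{vu} the componentwise meet $A\wedge C^{j}=(a_{1}\cap c_{1}^{j},\ldots)$ equals $\hat{\mathbf{0}}$ precisely when some component vanishes (recall the identification of tuples with a $\mathbf{0}$-entry with $\hat{\mathbf{0}}$). Thus $A\wedge\mathbb{B}=\hat{\mathbf{0}}$ means: for every $j$ there is an $n$ with $a_{n}\cap c_{n}^{j}=\mathbf{0}$. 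On the other hand, by \eqref{dop2} and the order characterization \eqref{abb}, $C^{j}\leq A^{*}=\bigvee_{n\in\kappa}(\mathbf{1},\ldots,\mathbf{1},a_{n}^{c},\mathbf{1},\ldots)$ holds iff there is an $n$ with $c_{n}^{j}\subseteq a_{n}^{c}$. The two conditions match once one knows that, inside $\mathsf{E}$, $a_{n}\cap c_{n}^{j}=\mathbf{0}$ entails $c_{n}^{j}\subseteq a_{n}^{c}$ --- and this is exactly where distributivity of $\mathsf{E}$ is indispensable, since then $c_{n}^{j}=c_{n}^{j}\cap(a_{n}\cup a_{n}^{c})=(c_{n}^{j}\cap a_{n})\cup(c_{n}^{j}\cap a_{n}^{c})=c_{n}^{j}\cap a_{n}^{c}\subseteq a_{n}^{c}$. (The reverse $c_{n}^{j}\subseteq a_{n}^{c}\Rightarrow a_{n}\cap c_{n}^{j}=\mathbf{0}$ is automatic.) This single implication, whose failure in the non-distributive case is precisely what will later obstruct the converse theorem, is the main obstacle; granting it, each $C^{j}\leq A^{*}$, hence $\mathbb{B}\leq A^{*}$, completing $\Rightarrow$ and the characterization.

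Finally I would deduce the three axioms from the characterization. The identities $x\wedge\hat{\mathbf{0}}^{*}=x$ and $\hat{\mathbf{0}}^{**}=\hat{\mathbf{0}}$ are immediate from $\hat{\mathbf{0}}^{*}=\hat{\mathbf{1}}$ and $\hat{\mathbf{1}}^{*}=\hat{\mathbf{0}}$ (see \eqref{dop3}), as $\hat{\mathbf{1}}$ is the top. For $x\wedge(x\wedge y)^{*}=x\wedge y^{*}$: since $(x\wedge y)\wedge y^{*}\leq y\wedge y^{*}=\hat{\mathbf{0}}$, the characterization gives $y^{*}\leq(x\wedge y)^{*}$, whence $x\wedge y^{*}\leq x\wedge(x\wedge y)^{*}$; conversely $y\wedge\bigl(x\wedge(x\wedge y)^{*}\bigr)=(x\wedge y)\wedge(x\wedge y)^{*}=\hat{\mathbf{0}}$ gives $x\wedge(x\wedge y)^{*}\leq y^{*}$, hence $x\wedge(x\wedge y)^{*}\leq x\wedge y^{*}$. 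Therefore $*$ satisfies all three conditions and is the pseudo-complement of $\mathsf{D}_{\kappa}(\mathsf{E})$.
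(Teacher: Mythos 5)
Your proof is correct, but it takes a genuinely different route from the paper. The paper verifies the defining identity $\mathbb{A}\wedge\left(\mathbb{A}\wedge\mathbb{B}\right)^{*}=\mathbb{A}\wedge\mathbb{B}^{*}$ head-on: it expands both sides via the complete distributivity of $\mathsf{D}_{\kappa}\left(\mathsf{E}\right)$ and compares them letter by letter, with the distributivity of $\mathsf{E}$ entering through the computation $a_{n}^{i}\cap\left(\left(a_{n}^{i}\right)^{c}\cup\left(b_{n}^{j}\right)^{c}\right)=a_{n}^{i}\cap\left(b_{n}^{j}\right)^{c}$. You instead prove the maximality characterization $\mathbb{A}\wedge\mathbb{B}=\hat{\mathbf{0}}\Leftrightarrow\mathbb{B}\leq\mathbb{A}^{*}$, i.e.\ that $*$ coincides with the canonical pseudo-complement $\mathbb{A}^{\star}=\bigvee\left\{ \mathbb{B}:\mathbb{A}\wedge\mathbb{B}=\hat{\mathbf{0}}\right\}$, and then derive the three axioms by purely order-theoretic manipulations; here distributivity of $\mathsf{E}$ enters in the equivalent form ``$a\cap c=\mathbf{0}$ implies $c\subseteq a^{c}$.'' All the supporting steps you invoke are sound: the computation in Theorem \ref{pop} giving $\mathbb{A}\wedge\mathbb{A}^{*}=\hat{\mathbf{0}}$ indeed never uses $\mathbb{A}\in\mathsf{U}_{\kappa}\left(\mathsf{E}\right)$, the reductions via \eqref{dop4} and the infinite distributive law are legitimate, and the appeal to \eqref{vu} and \eqref{abb} in the core step is exactly right. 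What your approach buys is conceptual transparency: it makes explicit that $*=\star$ (a fact the paper only records in the remark following the proposition) and isolates the single point where distributivity of $\mathsf{E}$ is indispensable, which also explains the obstruction in the non-distributive case. What the paper's approach buys is a self-contained equational verification that does not require first establishing the Galois-type characterization.
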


\begin{proof}
It is enough to show that 
\begin{equation}
\mathbb{A}\wedge\left(\mathbb{A}\wedge\mathbb{B}\right)^{*}=\mathbb{A}\wedge\mathbb{B}^{*},\quad\forall\mathbb{A},\mathbb{B}\in\mathsf{D}_{\kappa}\left(\mathsf{E}\right),\label{equ}
\end{equation}
the first of the three properties that makes $\ast$ a pseudo-complement.
(The other two properties follows from the fact that $\hat{\mathbf{0}}^{*}=\hat{\mathbf{1}}$
and $\hat{\mathbf{1}}^{*}=\hat{\mathbf{0}}$). For $\mathbb{A}=\bigvee_{i\in I}A^{i}$
and $\mathbb{B}=\bigvee_{j\in J}B^{j}$, the l.h.s. of above equation
says that
\[
{\textstyle \mathbb{A}\wedge\left(\mathbb{A}\wedge\mathbb{B}\right)^{*}=\bigvee_{i\in I}\bigwedge_{l\in I}\bigwedge_{j\in J}A^{i}\wedge\left(A^{l}\cap B^{j}\right)^{*},}
\]
and the l.h.s.
\[
{\textstyle {\textstyle \mathbb{A}\wedge\mathbb{B}^{*}=\bigvee_{i\in I}\bigwedge_{j\in J}A^{i}\wedge\left(B^{j}\right)^{*}}.}
\]
On the one hand, since
\begin{equation}
\begin{array}{lll}
A^{i}\wedge\left(A^{l}\cap B^{j}\right)^{*} & = & A^{i}\wedge\bigvee_{n\in\kappa}\left(\mathbf{1},\ldots,\mathbf{1},\left(a_{n}^{l}\cap b_{n}^{j}\right)^{c},\mathbf{1},\ldots\right)\\
\\
 & = & \bigvee_{n\in\kappa}A^{i}\cap\left(\mathbf{1},\ldots,\mathbf{1},\left(a_{n}^{l}\right)^{c}\cup\left(b_{n}^{j}\right)^{c},\mathbf{1},\ldots\right)\\
\\
 & = & \bigvee_{n\in\kappa}\left(a_{1}^{i},\ldots,a_{n-1}^{i},a_{n}^{i}\cap\left(\left(a_{n}^{l}\right)^{c}\cup\left(b_{n}^{j}\right)^{c}\right),a_{n+1}^{i},\ldots\right),
\end{array}\label{l0i}
\end{equation}
and
\[
a_{n}^{i}\cap\left(b_{n}^{j}\right)^{c}\subseteq a_{n}^{i}\cap\left(\left(a_{n}^{l}\right)^{c}\cup\left(b_{n}^{j}\right)^{c}\right),
\]
then
\[
{\textstyle A^{i}\wedge\left(B^{j}\right)^{*}=\bigvee_{n\in\kappa}\left(a_{1}^{i},\ldots,a_{n-1}^{i},a_{n}^{i}\cap\left(b_{n}^{j}\right)^{c},a_{n+1}^{i},\ldots\right)\leq A^{i}\wedge\left(A^{l}\cap B^{j}\right)^{*}.}
\]
This implies that
\[
{\textstyle \bigvee_{i\in I}\bigwedge_{j\in J}A^{i}\wedge\left(B^{j}\right)^{*}\leq\bigvee_{i\in I}\bigwedge_{l\in I}\bigwedge_{j\in J}A^{i}\wedge\left(A^{l}\cap B^{j}\right)^{*},}
\]
i.e.
\[
\mathbb{A}\wedge\mathbb{B}^{*}\leq\mathbb{A}\wedge\left(\mathbb{A}\wedge\mathbb{B}\right)^{*}.
\]
On the other hand, for all $l_{0}\in I$, it is clear that
\[
{\textstyle \bigwedge_{l\in I}\bigwedge_{j\in J}A^{i}\wedge\left(A^{l}\cap B^{j}\right)^{*}\leq\bigwedge_{j\in J}A^{i}\wedge\left(A^{l_{0}}\cap B^{j}\right)^{*}.}
\]
Taking $l_{0}=i$, and according to Eq. \eqref{l0i}, we have that
\[
\begin{array}{lll}
A^{i}\wedge\left(A^{i}\cap B^{j}\right)^{*} & = & \bigvee_{n\in\kappa}\left(a_{1}^{i},\ldots,a_{n-1}^{i},a_{n}^{i}\cap\left(\left(a_{n}^{i}\right)^{c}\cup\left(b_{n}^{j}\right)^{c}\right),a_{n+1}^{i},\ldots\right)\\
\\
 & = & \bigvee_{n\in\kappa}\left(a_{1}^{i},\ldots,a_{n-1}^{i},a_{n}^{i}\cap\left(b_{n}^{j}\right)^{c},a_{n+1}^{i},\ldots\right)\\
\\
 & = & A^{i}\wedge\left(B^{j}\right)^{*},
\end{array}
\]
where we have used the distributivity of $\mathsf{E}$, what implies
that
\begin{equation}
a_{n}^{i}\cap\left(\left(a_{n}^{i}\right)^{c}\cup\left(b_{n}^{j}\right)^{c}\right)=\left(a_{n}^{i}\cap\left(a_{n}^{i}\right)^{c}\right)\cup\left(a_{n}^{i}\cap\left(b_{n}^{j}\right)^{c}\right)=a_{n}^{i}\cap\left(b_{n}^{j}\right)^{c}.\label{l0j}
\end{equation}
So, 
\[
{\textstyle \bigwedge_{l\in I}\bigwedge_{j\in J}A^{i}\wedge\left(A^{l}\cap B^{j}\right)^{*}\leq\bigwedge_{j\in J}A^{i}\wedge\left(B^{j}\right)^{*},}
\]
from which
\[
\mathbb{A}\wedge\left(\mathbb{A}\wedge\mathbb{B}\right)^{*}\leq\mathbb{A}\wedge\mathbb{B}^{*},
\]
and the equality \eqref{equ} follows.
\end{proof}
So, if $\mathsf{E}$ is distributive,
\[
\mathsf{S}\left(\mathsf{D}_{\kappa}\left(\mathsf{E}\right)\right)=\mathsf{D}_{\kappa}\left(\mathsf{E}\right)^{**}=\mathsf{U}_{\kappa}\left(\mathsf{E}\right)
\]
and the orthocomplemented complete lattice structures on $\mathsf{S}\left(\mathsf{D}_{\kappa}\left(\mathsf{E}\right)\right)$
and $\mathsf{U}_{\kappa}\left(\mathsf{E}\right)$ coincide. As a consequence,
$\mathsf{U}_{\kappa}\left(\mathsf{E}\right)$ is distributive. Moreover
(see Eq. \eqref{idl}), it satisfies the infinite distributive law
\begin{equation}
{\textstyle \mathbb{A}\wedge\left(\coprod_{j\in J}\mathbb{B}_{j}\right)=\coprod_{j\in J}\left(\mathbb{A}\wedge\mathbb{B}_{j}\right).}\label{idl2}
\end{equation}

\begin{rem*}
It is worth mentioning that the complete distributivity of $\mathsf{D}_{\kappa}\left(\mathsf{E}\right)$
implies the existence of a (unique) pseudo-complement $\star:\mathsf{D}_{\kappa}\left(\mathsf{E}\right)\rightarrow\mathsf{D}_{\kappa}\left(\mathsf{E}\right)$,
which is given by
\[
\mathbb{A}^{\star}=\bigvee\left\{ \mathbb{B}:\mathbb{A}\wedge\mathbb{B}=\hat{\mathbf{0}}\right\} .
\]
Above result simply says that $\star=*$ when $\mathsf{E}$ is distributive.
\end{rem*}

\subsection{Proof of the converse}

Consider elements of the form $\left(x,\hat{\mathbf{1}}\right)\in\mathsf{U}_{\kappa}\left(\mathsf{E}\right)$,
with $x\in\mathsf{E}$. Then, for such elements, 
\[
\begin{array}{lll}
\left(x,\hat{\mathbf{1}}\right)\amalg\left(y,\hat{\mathbf{1}}\right) & = & \left(\left(x,\hat{\mathbf{1}}\right)\vee\left(y,\hat{\mathbf{1}}\right)\right)^{**}=\left(\left(x,\hat{\mathbf{1}}\right)^{*}\wedge\left(y,\hat{\mathbf{1}}\right)^{*}\right)^{*}\\
\\
 & = & \left(\left(x^{c},\hat{\mathbf{1}}\right)\wedge\left(y^{c},\hat{\mathbf{1}}\right)\right)^{*}=\left(x^{c}\cap y^{c},\hat{\mathbf{1}}\right)^{*}=\left(x\cup y,\hat{\mathbf{1}}\right).
\end{array}
\]
So, given $a,b,c\in\mathsf{E}$, 
\[
\left(\left(a,\hat{\mathbf{1}}\right)\amalg\left(b,\hat{\mathbf{1}}\right)\right)\wedge\left(c,\hat{\mathbf{1}}\right)=\left(a\cup b,\hat{\mathbf{1}}\right)\cap\left(c,\hat{\mathbf{1}}\right)=\left(\left(a\cup b\right)\cap c,\hat{\mathbf{1}}\right)
\]
and
\[
\left(\left(a,\hat{\mathbf{1}}\right)\wedge\left(c,\hat{\mathbf{1}}\right)\right)\amalg\left(\left(b,\hat{\mathbf{1}}\right)\wedge\left(c,\hat{\mathbf{1}}\right)\right)=\left(a\cap c,\hat{\mathbf{1}}\right)\amalg\left(b\cap c,\hat{\mathbf{1}}\right)=\left(\left(a\cap c\right)\cup\left(b\cap c\right),\hat{\mathbf{1}}\right).
\]
As a consequence, if we assume that $\mathsf{U}_{\kappa}\left(\mathsf{E}\right)$
is distributive, then the same is true for $\mathsf{E}$.

\section{The \textit{event space completion}}

\label{eec} At the beginning of the paper, we said that we would
concentrate exclusively on event spaces given by orthocomplemented
complete lattices. But, what happens if, for some reason, the event
space of an experiment is \textit{naturally} given by an orthocomplemented
lattice $\mathsf{L}$ which is not necessarily complete? This is the
case, for instance, of the classical statistical mechanical systems,
whose event spaces are (non-necessarily complete) $\sigma$-algebras.
We shall show in this Section that, in such cases, we can replace
$\mathsf{L}$ by a complete orthocomplemented lattice $\mathsf{E}_{\mathsf{L}}$
that embeds $\mathsf{L}$, in such a way that, in some sense, no new
\textit{artificial} events must be added to $\mathsf{L}$. Moreover,
$\mathsf{E}_{\mathsf{L}}$ is distributive if and only if $\mathsf{L}$
is distributive. (The construction is similar to that of the universal
logic, but in the particular case in which $\kappa=\left[1\right]$). 

\bigskip{}

Consider an event space given by a orthocomplemented lattice $\left(\mathsf{L},\subseteq,\cup,\cap,^{c},\mathbf{0},\mathbf{1}\right)$.
Consider also the set of terms 
\[
\mathsf{T}\left(\mathsf{L}\right)=\left\{ {\textstyle \bigsqcup_{i\in I}}a^{i}\;:\;\left|I\right|\leq2^{\left|\mathsf{L}\right|},\quad a^{i}\in\mathsf{L}\right\} 
\]
(compare to \eqref{fts} for $\kappa=\left[1\right]$), the equivalence
relation $\backsim$ of Section \ref{tap} and the quotient
\[
\mathsf{D}\left(\mathsf{L}\right)\coloneqq\left.\mathsf{T}\left(\mathsf{L}\right)\right/\backsim.
\]
For such an equivalence, for all $a,b\in\mathsf{L}$, we have again
that $\left[a\right]=\left[b\right]$, i.e. $a\backsim b$, if and
only if $a=b$. So, we can see $\mathsf{L}$ as a subset of $\mathsf{D}\left(\mathsf{L}\right)$
via the injective function $a\mapsto\left[a\right]$. Following similar
steps as in Section \ref{tcdis}, it can be shown that a completely
distributive lattice structure
\[
\left(\mathsf{D}\left(\mathsf{L}\right),\leq,\vee,\wedge,\mathbf{0},\mathbf{1}\right)
\]
can be defined on $\mathsf{D}\left(\mathsf{L}\right)$. Also, the
elements of $\mathsf{D}\left(\mathsf{L}\right)$ can be described
as:
\[
{\textstyle \bigvee_{i\in I}a^{i},\quad\textrm{with}\quad I\neq\emptyset\quad\textrm{and}\quad a_{i}\in\mathsf{L},\;\forall i\in I.}
\]

\begin{rem}
\label{rvu} The only difference with the results obtained in Section
\ref{tcdis} is that, in the formula for the infimum (see \eqref{finf}),
we must replace $\bigcap$ by $\bigwedge$, unless the involved elements
are finite or $\mathsf{L}$ is already complete. So, on $\mathsf{D}\left(\mathsf{L}\right)$,
we have in general that:
\begin{equation}
\bigwedge S=\bigcap S,\quad\forall S\subseteq\mathsf{L}\subseteq\mathsf{D}\left(\mathsf{L}\right),\quad\textrm{if}\;S\;\textrm{is finite}.\label{iinc}
\end{equation}
\end{rem}

Now, following similar steps as in Section \ref{unlo}, it can be
shown that the formula
\[
{\textstyle \left(\bigvee_{i\in I}a^{i}\right)^{*}=\bigwedge_{i\in I}\left(a^{i}\right)^{c}}
\]
defines a order-reversing function $*:\mathsf{D}\left(\mathsf{L}\right)\rightarrow\mathsf{D}\left(\mathsf{L}\right)$
such that 
\[
\left(\bigvee S\right)^{*}=\bigwedge S^{*},\quad\forall S\subseteq\mathsf{D}\left(\mathsf{L}\right).
\]
Moreover, $\mathbb{A}^{***}=\mathbb{A}^{*}$ and $\mathbb{A}\leq\mathbb{A}^{**}$
for all $\mathbb{A}\in\mathsf{D}\left(\mathsf{L}\right)$, what implies
that $**:\mathsf{D}\left(\mathsf{L}\right)\rightarrow\mathsf{D}\left(\mathsf{L}\right)$
is a closure operator. Consequently, as we saw in Theorem \ref{pop},
$\mathsf{D}\left(\mathsf{L}\right)^{**}$ is an orthocomplemented
complete lattice, with ortho-complement given by $*$. Also, the injective
function $a\in\mathsf{L}\mapsto\left[a\right]\in\mathsf{D}\left(\mathsf{L}\right)$
has its image inside $\mathsf{\mathsf{D}\left(\mathsf{L}\right)^{**}}$,
so we can see $\mathsf{L}$ as a subset of $\mathsf{\mathsf{D}\left(\mathsf{L}\right)^{**}}$.
All that gives rise to the following definition.
\begin{defn}
Given an orthocomplemented lattice $\left(\mathsf{L},\subseteq,\cup,\cap,^{c},\mathbf{0},\mathbf{1}\right)$,
we define the \textbf{event space completion }of $\mathsf{L}$ as
the orthocomplemented complete lattice (i.e. the logic)
\[
\left(\mathsf{E}_{\mathsf{L}},\leq,\amalg,\wedge,*,\mathbf{0},\mathbf{1}\right)
\]
with 
\begin{equation}
\mathsf{E}_{\mathsf{L}}=\mathsf{D}\left(\mathsf{L}\right)^{**}=\left\{ \mathbb{A}\in\mathsf{D}\left(\mathsf{L}\right):\mathbb{A}=\mathbb{A}^{**}\right\} =\left\{ \mathbb{A}^{*}:\mathbb{A}\in\mathsf{D}\left(\mathsf{L}\right)\right\} \label{el}
\end{equation}
and supremum
\[
\coprod S=\left(\bigvee S\right)^{**},\quad\forall S\subseteq\mathsf{E}_{\mathsf{L}}.
\]
\end{defn}

Again, the elements of $\mathsf{E}_{\mathsf{L}}$ are given by
\[
{\textstyle \coprod_{i\in I}a^{i},\quad\textrm{with}\quad I\neq\emptyset\quad\textrm{and}\quad a_{i}\in\mathsf{L},\;\forall i\in I.}
\]
In other words, $\mathsf{E}_{\mathsf{L}}$ contains the original events
of $\mathsf{L}$ together with new elements: $\coprod_{i\in I}a^{i}$.
The latter represent the propositions of the form: ``$a^{i}\in\mathsf{L}$
occurs for some $i\in I$''. Then, we can say that no new artificial
events were added to $\mathsf{L}$. Moreover, we have the following
important result. 
\begin{thm}
The inclusion $\mathsf{inc}:\mathsf{L}\hookrightarrow\mathsf{E}_{\mathsf{L}}$
is an injective homomorphism of orthocomplemented lattices, and it
is an isomorphism if and only if $\mathsf{L}$ is complete. Also,
if $\mathsf{L}$ and $\mathsf{L}'$ are isomorphic orthocomplemented
lattices, so are $\mathsf{E}_{\mathsf{L}}$ and $\mathsf{E}_{\mathsf{L'}}$. 
\end{thm}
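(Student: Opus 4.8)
The plan is to check directly that $\mathsf{inc}\colon a\mapsto[a]$ is an injective orthocomplemented-lattice homomorphism, then to pin down surjectivity via completeness, and finally to transport the whole construction along an isomorphism $\mathsf{L}\cong\mathsf{L}'$. Injectivity is free from the characterization of $\backsim$ recalled just before the definition of $\mathsf{E}_{\mathsf{L}}$: $[a]=[b]\iff a=b$; and $\mathbf{0},\mathbf{1}$ are clearly preserved. For the operations I would verify three things. Orthocomplementation is immediate, since on a single letter $a^{*}=\bigwedge_{i\in\{*\}}a^{c}=a^{c}$, so $\mathsf{inc}(a^{c})=\mathsf{inc}(a)^{*}$. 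Finite meets are preserved because the infimum of $\mathsf{E}_{\mathsf{L}}$ is inherited from $\mathsf{D}(\mathsf{L})$ and, by Remark \ref{rvu} (Eq. \eqref{iinc}), $\bigwedge$ agrees with $\cap$ on finite subsets of $\mathsf{L}$, whence $\mathsf{inc}(a\cap b)=[a]\wedge[b]$. The only delicate point is finite joins, since the join of $\mathsf{E}_{\mathsf{L}}$ is the corrected operation $\amalg=(\bigvee(\cdot))^{**}$: here I compute $(a\vee b)^{*}=a^{c}\wedge b^{c}=a^{c}\cap b^{c}\in\mathsf{L}$ (finite meet) and then $(a^{c}\cap b^{c})^{*}=(a^{c}\cap b^{c})^{c}=a\cup b$ by finite De Morgan in $\mathsf{L}$, so $a\amalg b=(a\vee b)^{**}=a\cup b=\mathsf{inc}(a\cup b)$.

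For the equivalence ``isomorphism $\iff$ $\mathsf{L}$ complete'', the direction $(\Rightarrow)$ is immediate: $\mathsf{E}_{\mathsf{L}}$ is a complete lattice, so an order isomorphism $\mathsf{L}\cong\mathsf{E}_{\mathsf{L}}$ forces $\mathsf{L}$ to be complete. For $(\Leftarrow)$, as $\mathsf{inc}$ is already an injective homomorphism, it only remains to prove surjectivity. A generic element of $\mathsf{E}_{\mathsf{L}}$ is $\coprod_{i\in I}a^{i}=(\bigvee_{i\in I}a^{i})^{**}$; unwinding exactly as in the proof of Theorem \ref{uee}, one has $(\bigvee_{i}a^{i})^{*}=\bigwedge_{i}(a^{i})^{c}$ by the De Morgan identity established for $*$ on $\mathsf{D}(\mathsf{L})$, and when $\mathsf{L}$ is complete Remark \ref{rvu} gives $\bigwedge_{i}(a^{i})^{c}=\bigcap_{i}(a^{i})^{c}\in\mathsf{L}$, so $\coprod_{i}a^{i}=(\bigcap_{i}(a^{i})^{c})^{c}=\bigcup_{i}a^{i}\in\mathsf{L}$ by the infinite De Morgan law (orthocomplementation is an order-reversing involution of the complete lattice $\mathsf{L}$, hence sends arbitrary meets to joins). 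Thus every element of $\mathsf{E}_{\mathsf{L}}$ equals $\mathsf{inc}(\bigcup_{i}a^{i})$ and $\mathsf{inc}$ is surjective. (Equivalently, when $\mathsf{L}$ is complete the construction of $\mathsf{E}_{\mathsf{L}}$ literally coincides with that of $\mathsf{U}_{[1]}(\mathsf{L})$, since $\bigwedge=\bigcap$ throughout, and this direction is a special case of Theorem \ref{uee}.)

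For the transport of isomorphisms I would show the construction is functorial. An isomorphism $\psi\colon\mathsf{L}\to\mathsf{L}'$ of orthocomplemented lattices induces a bijection $\mathsf{T}(\mathsf{L})\to\mathsf{T}(\mathsf{L}')$, $\bigsqcup_{i}a^{i}\mapsto\bigsqcup_{i}\psi(a^{i})$ (the bound $2^{|\mathsf{L}|}=2^{|\mathsf{L}'|}$ is preserved); being an order isomorphism, $\psi$ respects the two defining conditions of $\backsim$, so this descends to a bijection $\mathsf{D}(\mathsf{L})\to\mathsf{D}(\mathsf{L}')$ preserving $\vee$ and $\wedge$; and since $\psi$ commutes with $^{c}$, the induced map commutes with $*$, hence with $**$. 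Therefore it carries the fixed-point set $\mathsf{D}(\mathsf{L})^{**}=\mathsf{E}_{\mathsf{L}}$ isomorphically onto $\mathsf{D}(\mathsf{L}')^{**}=\mathsf{E}_{\mathsf{L}'}$, preserving $\amalg$, $\wedge$ and $*$.

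The genuine obstacle I anticipate is the surjectivity computation in the $(\Leftarrow)$ direction, together with its finite analogue hidden in the join-preservation identity. In both cases the relevant join is not the ambient $\vee$ of $\mathsf{D}(\mathsf{L})$ but its double-star closure, so one must pass through $*$ and invoke precisely the two facts that make completeness the right hypothesis: the replacement $\bigwedge=\bigcap$ valid only in the complete case (Remark \ref{rvu}) and the infinite De Morgan law, which together guarantee that the closure lands back inside $\mathsf{L}$.
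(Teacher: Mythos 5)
Your proposal is correct and follows essentially the same route as the paper: the same computation $a^{*}=a^{c}$, $a\wedge b=a\cap b$ via Eq. \eqref{iinc}, and $a\amalg b=(a^{c}\cap b^{c})^{c}=a\cup b$ for the homomorphism property; the same observation that completeness makes $\bigwedge=\bigcap$ so that $(\coprod_{i}a^{i})^{*}\in\mathsf{L}$ and hence $\mathsf{E}_{\mathsf{L}}=\mathsf{L}$; and the same functorial transport of an isomorphism $\chi$ to $\hat{\chi}$ on $\mathsf{D}(\mathsf{L})$ commuting with $*$. The only cosmetic difference is that you spell out the trivial converse direction (an isomorphism forces $\mathsf{L}$ to be complete), which the paper leaves implicit.
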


\begin{proof}
Given $a\in\mathsf{L}$, we know that $a^{*}=a^{c}=\mathsf{inc}\left(a^{c}\right)$.
Also, given in addition $b\in\mathsf{L}$, from Eq. \eqref{iinc}
we have that
\begin{equation}
a\wedge b=a\cap b=\mathsf{inc}\left(a\cap b\right).\label{i}
\end{equation}
On the other hand, by the very definition of $*$ and by using the
De Morgan laws on $\mathsf{L}$,
\begin{equation}
a\amalg b=\left(a\vee b\right)^{**}=\left(a^{*}\wedge b^{*}\right)^{*}=\left(a^{c}\cap b^{c}\right)^{c}=a\cup b=\mathsf{inc}\left(a\cup b\right).\label{uu}
\end{equation}
All that proves that the inclusion $\mathsf{inc}:\mathsf{L}\hookrightarrow\mathsf{E}_{\mathsf{L}}$
is an injective homomorphism of orthocomplemented lattices. If $\mathsf{L}$
is complete (see Eq. \eqref{iinc} and Remark \ref{rvu}), then
\[
\bigwedge S=\bigcap S,\quad\forall S\subseteq\mathsf{L},
\]
so,
\[
{\textstyle \left(\coprod_{i\in I}a^{i}\right)^{*}=\bigwedge_{i\in I}\left(a^{i}\right)^{*}=\bigcap_{i\in I}\left(a^{i}\right)^{c}\in\mathsf{L}.}
\]
This implies that any element of $\mathsf{E}_{\mathsf{L}}$ is included
in $\mathsf{L}$, i.e. $\mathsf{L}=\mathsf{E}_{\mathsf{L}}$, and
as a consequence $\mathsf{inc}$ is the identity map, \textit{ipso-facto}
an isomorphism.

Suppose that $\chi:\mathsf{L}\rightarrow\mathsf{L}'$ is an isomorphism
of orthocomplemented lattices. It is easy to see that 
\[
\hat{\chi}:\left[{\textstyle \bigsqcup_{i\in I}}a^{i}\right]\in\mathsf{D}\left(\mathsf{L}\right)\longmapsto\left[{\textstyle \bigsqcup_{i\in I}}\chi\left(a^{i}\right)\right]\in\mathsf{D}\left(\mathsf{L}'\right)
\]
is a well-defined isomorphism of complete lattices. In particular,
\[
\hat{\chi}\left(\bigvee S\right)=\bigvee\hat{\chi}\left(S\right)\quad\textrm{and}\quad\hat{\chi}\left(\bigwedge S\right)=\bigwedge\hat{\chi}\left(S\right),\quad\forall S\subseteq\mathsf{D}\left(\mathsf{L}\right).
\]
Moreover,
\[
\begin{array}{lll}
\hat{\chi}\left(\left({\textstyle \bigvee_{i\in I}}a^{i}\right)^{*}\right) & = & \hat{\chi}\left({\textstyle \bigwedge_{i\in I}}\left(a^{i}\right)^{*}\right)={\textstyle \bigwedge_{i\in I}}\hat{\chi}\left(\left(a^{i}\right)^{*}\right)={\textstyle \bigwedge_{i\in I}}\chi\left(\left(a^{i}\right)^{c}\right)\\
\\
 & = & {\textstyle \bigwedge_{i\in I}}\left(\chi\left(a^{i}\right)\right)^{c}={\textstyle \bigwedge_{i\in I}}\left(\chi\left(a^{i}\right)\right)^{*}=\left({\textstyle \bigvee_{i\in I}}\chi\left(a^{i}\right)\right)^{*},
\end{array}
\]
what implies that $\hat{\chi}\left(\mathsf{E}_{\mathsf{L}}\right)\subseteq\mathsf{E}_{\mathsf{L}'}$
(see Eq. \eqref{el}) and that $\hat{\chi}$ defines an injective
homomorphism $\mathsf{E}_{\mathsf{L}}\hookrightarrow\mathsf{E}_{\mathsf{L}'}$
that respect $*$. The same can be said for the inverse of $\hat{\chi}$,
\[
\hat{\chi}^{-1}:\left[{\textstyle \bigsqcup_{i\in I}}c^{i}\right]\in\mathsf{D}\left(\mathsf{L}'\right)\longmapsto\left[{\textstyle \bigsqcup_{i\in I}}\chi^{-1}\left(c^{i}\right)\right]\in\mathsf{D}\left(\mathsf{L}\right),
\]
what ensures that $\hat{\chi}:\mathsf{E}_{\mathsf{L}}\rightarrow\mathsf{E}_{\mathsf{L}'}$
defines the isomorphism of orthocomplemented complete lattices we
are looking for.
\end{proof}
Regarding the distributivity, we have the following annunciated result.
\begin{thm}
$\mathsf{E}_{\mathsf{L}}$ is distributive if and only if $\mathsf{L}$
is distributive.
\end{thm}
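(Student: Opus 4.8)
The plan is to prove the two implications separately, reusing the direct and converse arguments already developed for $\mathsf{U}_{\kappa}\left(\mathsf{E}\right)$, now in the $\kappa=\left[1\right]$ setting but allowing $\mathsf{L}$ to be incomplete.

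For the converse I would argue that $\mathsf{L}$ sits inside $\mathsf{E}_{\mathsf{L}}$ as a sublattice. Indeed, the previous theorem establishes that the inclusion $\mathsf{inc}:\mathsf{L}\hookrightarrow\mathsf{E}_{\mathsf{L}}$ preserves finite meets and joins: by \eqref{i} we have $a\wedge b=a\cap b$ and by \eqref{uu} we have $a\amalg b=a\cup b$ for all $a,b\in\mathsf{L}$. Hence the image of $\mathsf{L}$ is a sublattice of $\left(\mathsf{E}_{\mathsf{L}},\amalg,\wedge\right)$. Since distributivity is an identity and therefore inherited by any sublattice, distributivity of $\mathsf{E}_{\mathsf{L}}$ forces distributivity of $\mathsf{L}$. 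This direction is immediate.

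For the direct implication I would assume $\mathsf{L}$ distributive and follow Proposition \ref{prop23} verbatim: show that $*$ is a pseudo-complement on the completely distributive lattice $\mathsf{D}\left(\mathsf{L}\right)$, so that $\mathsf{E}_{\mathsf{L}}=\mathsf{D}\left(\mathsf{L}\right)^{**}=\mathsf{S}\left(\mathsf{D}\left(\mathsf{L}\right)\right)$ is the skeleton of a complete $p$-algebra, which by the theory recalled from \cite{blyth} carries a distributive (indeed infinitely distributive) orthocomplemented complete lattice structure. Concretely, writing $\mathbb{A}=\bigvee_{i\in I}a^{i}$ and $\mathbb{B}=\bigvee_{j\in J}b^{j}$ with $a^{i},b^{j}\in\mathsf{L}$, I would check the defining identity $\mathbb{A}\wedge\left(\mathbb{A}\wedge\mathbb{B}\right)^{*}=\mathbb{A}\wedge\mathbb{B}^{*}$ exactly as in \eqref{equ}, the two remaining $p$-algebra axioms following from $\mathbf{0}^{*}=\mathbf{1}$ and $\mathbf{1}^{*}=\mathbf{0}$. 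The heart of that computation is the single-component identity $a\cap\left(a^{c}\cup b^{c}\right)=a\cap b^{c}$, precisely an instance of distributivity in $\mathsf{L}$ (compare \eqref{l0j}).

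The one place where the present situation diverges from Proposition \ref{prop23} is that $\mathsf{L}$ need not be complete, so infima in $\mathsf{D}\left(\mathsf{L}\right)$ are formed with $\bigwedge$ rather than the set-meet $\bigcap$ (Remark \ref{rvu}); I expect verifying that this causes no trouble to be the only subtle point. It does not: the distributive step \eqref{l0j} involves only the finite meet $a\cap\left(a^{c}\cup b^{c}\right)$ of elements of $\mathsf{L}$, and for finite families $\bigwedge$ agrees with $\bigcap$ by \eqref{iinc}. Thus every appeal to distributivity is confined to finite computations inside $\mathsf{L}$, where completeness is irrelevant, and the argument transfers unchanged. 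Once $*$ is known to be a pseudo-complement, the distributivity of the skeleton $\mathsf{E}_{\mathsf{L}}$ follows from the $p$-algebra construction, completing the equivalence.
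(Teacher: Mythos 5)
Your proposal is correct and follows essentially the same route as the paper: the easy direction via the identities $a\wedge b=a\cap b$ and $a\amalg b=a\cup b$ showing $\mathsf{L}$ sits as a sublattice of $\mathsf{E}_{\mathsf{L}}$, and the hard direction by verifying that $*$ is a pseudo-complement on $\mathsf{D}\left(\mathsf{L}\right)$ exactly as in Proposition \ref{prop23} and invoking the distributivity of the skeleton of a complete $p$-algebra. Your extra remark that the appeal to distributivity is confined to finite meets, where \eqref{iinc} guarantees $\bigwedge$ and $\bigcap$ agree even when $\mathsf{L}$ is incomplete, is a welcome clarification of a point the paper leaves implicit.
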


\begin{proof}
Given $a,b,c\in\mathsf{L}$, using \eqref{i} and\eqref{uu} we have
that
\[
\left(a\amalg b\right)\wedge c=\left(a\cup b\right)\wedge c=\left(a\cup b\right)\cap c
\]
and
\[
\left(a\wedge c\right)\amalg\left(b\wedge c\right)=\left(a\cap c\right)\amalg\left(b\cap c\right)=\left(a\cap c\right)\cup\left(b\cap c\right).
\]
So, if $\mathsf{E}_{\mathsf{L}}$ is distributive, then so is $\mathsf{L}$.
To show the converse, we can prove that $*$ is a pseudo-complement
on $\mathsf{D}\left(\mathsf{L}\right)$. And to do that, it is enough
to repeat the proof of Proposition \ref{prop23} for the $\kappa=\left[1\right]$
case.
\end{proof}

\section{The tensor product presentation}

\label{tpp} In this section we shall extend the construction of the
universal logic to the case in which the event spaces change from
one repetition to another, and also to the case in which the cardinality
of $\kappa$ is arbitrary. This gives rise, in turn, to another way
of seeing the universal logic: as a tensor product of orthocomplemented
complete lattices.

\subsection{The logic ${\textstyle \bigotimes_{\alpha\in\kappa}}\mathsf{E}_{\alpha}$}

Consider a family $\left\{ \mathsf{E}_{\alpha}\right\} _{\alpha\in\kappa}$
of logics $\left(\mathsf{E}_{\alpha},\subseteq\cup,\cap,^{c},\mathbf{0},\mathbf{1}\right)$,
where $\kappa$ is an index set with arbitrary cardinality, and consider
the functions 
\[
A:\kappa\rightarrow{\textstyle \bigcup_{\alpha\in\kappa}}\mathsf{E}_{\alpha}\quad\textrm{such that}\quad A\left(\alpha\right)\in\mathsf{E}_{\alpha}.
\]
If $A\left(\alpha\right)=\mathbf{0}$ for some $\alpha$, then we
shall identify $A$ with the function with constant value $\mathbf{0}$,
which we shall indicate $\hat{\mathbf{0}}$. Also, let us indicate
$\hat{\mathbf{1}}$ the function with constant value $\mathbf{1}$.
Finally, denote by $\mathsf{F}_{\kappa}$ the set of above functions
(with the mentioned identification). It is clear that $\mathsf{F}_{\kappa}$
is a complete lattice with respect to the component-wise order: $A\subseteq B$
if and only if $A\left(\alpha\right)\subseteq B\left(\alpha\right)$,
$\forall\alpha\in\kappa$. Its bottom is $\hat{\mathbf{0}}$ and its
top is $\hat{\mathbf{1}}$. 
\begin{rem*}
Of course, if for instance $\kappa=\mathbb{N}$ and $\mathsf{E}_{\alpha}=\mathsf{E}$
for all $\alpha\in\kappa$, being $\mathsf{E}$ some given logic,
then $\mathsf{F}_{\kappa}=\mathsf{E}^{\mathbb{N}}$.
\end{rem*}
Consider now the set of terms 
\[
\mathsf{T}\left(\mathsf{F}_{\kappa}\right)=\left\{ {\textstyle \bigsqcup_{i\in I}}A^{i}\;:\;\left|I\right|\leq2^{\left|\mathsf{F}_{\kappa}\right|},\quad A^{i}\in\mathsf{F}_{\kappa}\right\} ,
\]
the equivalence relation $\backsim$ of Section \ref{tap}, and the
quotient
\[
\mathsf{D}\left(\mathsf{F}_{\kappa}\right)\coloneqq\left.\mathsf{T}\left(\mathsf{F}_{\kappa}\right)\right/\backsim.
\]
For such an equivalence, for all $A,B\in\mathsf{F}_{\kappa}$, we
have again that $\left[A\right]=\left[B\right]$, i.e. $A\backsim B$,
if and only if $A=B$. So, we can see $\mathsf{F}_{\kappa}$ as a
subset of $\mathsf{D}\left(\mathsf{F}_{\kappa}\right)$ via the injective
function 
\begin{equation}
\varphi:A\in\mathsf{\mathsf{F}_{\kappa}}\mapsto\left[A\right]\in\mathsf{D}\left(\mathsf{F}_{\kappa}\right).\label{fial}
\end{equation}
Following similar steps as in Section \ref{tcdis}, it can be shown
that a completely distributive lattice structure
\[
\left(\mathsf{D}\left(\mathsf{F}_{\kappa}\right),\leq,\vee,\wedge,\mathbf{0},\mathbf{1}\right)
\]
can be defined on $\mathsf{D}\left(\mathsf{F}_{\kappa}\right)$. Regarding
this structure, and that of $\mathsf{F}_{\kappa}$, the above injection
$\varphi$ is an order embedding. Also, the elements of $\mathsf{D}\left(\mathsf{F}_{\kappa}\right)$
can be described as:
\begin{equation}
{\textstyle \bigvee_{i\in I}A^{i},\quad\textrm{with}\quad I\neq\emptyset\quad\textrm{and}\quad A_{i}\in\mathsf{F}_{\kappa},\;\forall i\in I,}\label{cbd}
\end{equation}
where we are identifying $A$ and $\left[A\right]$ \textit{via} $\varphi$
(see \eqref{fial}). And the Eq. \eqref{finf} gives a formula for
the infimum in $\mathsf{D}\left(\mathsf{F}_{\kappa}\right)$. Moreover,
as in Section \ref{unlo}, it can be shown that the formula
\begin{equation}
{\textstyle \left(\bigvee_{i\in I}A^{i}\right)^{*}={\textstyle \bigvee_{f\in F}}\mathbb{A}_{f,I}},\quad\mathbb{A}_{f,I}\left(\alpha\right)\coloneqq{\textstyle \bigcap_{f\left(i\right)=\alpha}}\left(A^{i}\left(\alpha\right)\right)^{c},\label{fc}
\end{equation}
with $F\coloneqq\left\{ f:I\rightarrow\kappa\right\} $, defines a
order-reversing function $*:\mathsf{D}\left(\mathsf{F}_{\kappa}\right)\rightarrow\mathsf{D}\left(\mathsf{F}_{\kappa}\right)$
such that 
\[
\left(\bigvee S\right)^{*}=\bigwedge S^{*},\quad\forall S\subseteq\mathsf{D}\left(\mathsf{F}_{\kappa}\right).
\]
In particular, for $A\in\mathsf{F}_{\kappa}$,
\begin{equation}
A^{*}={\textstyle \bigvee_{\alpha\in\kappa}}A_{\alpha},\quad\textrm{with}\quad A_{\alpha}\left(\beta\right)=\begin{cases}
\left(A\left(\alpha\right)\right)^{c}, & \beta=\alpha,\\
\mathbf{1}, & \beta\neq\alpha.
\end{cases}\label{cf0}
\end{equation}
On the other hand, we can also see that $\mathbb{A}^{***}=\mathbb{A}^{*}$
and $\mathbb{A}\leq\mathbb{A}^{**}$ for all $\mathbb{A}\in\mathsf{D}\left(\mathsf{F}_{\kappa}\right)$,
what implies that $**:\mathsf{D}\left(\mathsf{F}_{\kappa}\right)\rightarrow\mathsf{D}\left(\mathsf{F}_{\kappa}\right)$
is a closure operator. Consequently, as we saw in Theorem \ref{pop},
$\mathsf{D}\left(\mathsf{F}_{\kappa}\right)^{**}$ is an orthocomplemented
complete lattice, with orthocomplementation given by $*$. Also, the
injective function $\varphi$ (see Eq. \eqref{fial}) has its image
inside $\mathsf{\mathsf{D}\left(\mathsf{F}_{\kappa}\right)^{**}}$,
so we can see $\mathsf{F}_{\kappa}$ as a subset of $\mathsf{\mathsf{D}\left(\mathsf{F}_{\kappa}\right)^{**}}$.
All that gives rise to the following definition.
\begin{defn}
Given a family $\left\{ \mathsf{E}_{\alpha}\right\} _{\alpha\in\kappa}$
of logics, we define its \textbf{tensor product }as the logic
\[
\left({\textstyle \bigotimes_{\alpha\in\kappa}}\mathsf{E}_{\alpha},\leq,\amalg,\wedge,*,\mathbf{0},\mathbf{1}\right),
\]
where
\begin{equation}
{\textstyle \bigotimes_{\alpha\in\kappa}}\mathsf{E}_{\alpha}=\mathsf{D}\left(\mathsf{F}_{\kappa}\right)^{**}=\left\{ \mathbb{A}\in\mathsf{D}\left(\mathsf{F}_{\kappa}\right):\mathbb{A}=\mathbb{A}^{**}\right\} =\left\{ \mathbb{A}^{*}:\mathbb{A}\in\mathsf{D}\left(\mathsf{F}_{\kappa}\right)\right\} \label{el-1}
\end{equation}
and with the same lattice structure as $\mathsf{D}\left(\mathsf{F}_{\kappa}\right)$,
except for the supremum, which is given by
\[
\coprod S=\left(\bigvee S\right)^{**},\quad\forall S\subseteq{\textstyle \bigotimes_{\alpha\in\kappa}}\mathsf{E}_{\alpha}.
\]
\end{defn}

\begin{rem*}
In the particular case in which $\mathsf{E}_{\alpha}=\mathsf{E}$
for all $\alpha$, it is clear that
\[
{\textstyle \bigotimes_{\alpha\in\kappa}}\mathsf{E}_{\alpha}=\mathsf{U}_{\kappa}\left(\mathsf{E}\right).
\]
\end{rem*}
Note that the elements of ${\textstyle \bigotimes_{\alpha\in\kappa}}\mathsf{E}_{\alpha}$
can be described as in \eqref{cbd}, with 
\[
\left({\textstyle \bigvee_{i\in I}}A^{i}\right)^{**}={\textstyle \bigvee_{i\in I}}A^{i},
\]
i.e. ${\textstyle \bigvee_{i\in I}}A^{i}={\textstyle \coprod_{i\in I}}A^{i}$.

\subsubsection{The embeddings $\mathfrak{i}_{\alpha}$}

Related to each $\alpha$, we have a natural embedding
\[
\mathfrak{i}_{\alpha}:a\in\mathsf{E}_{\alpha}\longmapsto\mathfrak{i}_{\alpha}\left(a\right)\in\mathsf{F}_{\kappa}
\]
with
\begin{equation}
\left(\mathfrak{i}_{\alpha}\left(a\right)\right)\left(\beta\right)=\begin{cases}
a, & \beta=\alpha,\\
\mathbf{1}, & \beta\neq\alpha,
\end{cases}\label{ia}
\end{equation}
which, \textit{via} the embedding $\varphi:\mathsf{F}_{\kappa}\rightarrow{\textstyle \bigotimes_{\beta\in\kappa}}\mathsf{E}_{\beta}$
(see \eqref{fial}), can be seen as a function from $\mathsf{E}_{\alpha}$
into ${\textstyle \bigotimes_{\beta\in\kappa}}\mathsf{E}_{\beta}$.
Below, by a \textit{logic embedding} we mean an injective homomorphism
of logics (i.e. of orthocomplemented complete lattices).
\begin{prop}
Each function $\mathfrak{i}_{\alpha}:\mathsf{E}_{\alpha}\rightarrow{\textstyle \bigotimes_{\beta\in\kappa}}\mathsf{E}_{\beta}$,
given by Eq. \eqref{ia}, is a logic embedding. In particular,
\[
\mathfrak{i}_{\alpha}\left(\bigcup S\right)=\coprod\mathfrak{i}_{\alpha}\left(S\right),\quad\mathfrak{i}_{\alpha}\left(\bigcap S\right)=\bigwedge\mathfrak{i}_{\alpha}\left(S\right),
\]
for all $S\subseteq\mathsf{E}_{\alpha}$, and 
\[
\mathfrak{i}_{\alpha}\left(\mathbf{0}\right)=\hat{\mathbf{0}},\quad\mathfrak{i}_{\alpha}\left(\mathbf{1}\right)=\hat{\mathbf{1}},\quad\mathfrak{i}_{\alpha}\left(a^{c}\right)=\left(\mathfrak{i}_{\alpha}\left(a\right)\right)^{*},
\]
for all $a\in\mathsf{E}_{\alpha}$.
\end{prop}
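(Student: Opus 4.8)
The plan is to verify, one by one, the four data that make $\mathfrak{i}_\alpha$ a logic embedding — preservation of $\mathbf 0,\mathbf 1$, compatibility with orthocomplementation, and the join and meet homomorphism properties — reading everything off the explicit description \eqref{ia} and using the structure already established for $\bigotimes_{\beta\in\kappa}\mathsf{E}_\beta$. Injectivity is immediate and I would record it first: the $\alpha$-th coordinate of $\mathfrak{i}_\alpha(a)$ recovers $a$, and $\mathfrak{i}_\alpha(a)=\hat{\mathbf 0}$ precisely when $a=\mathbf 0$, so $\mathfrak{i}_\alpha(a)=\mathfrak{i}_\alpha(b)$ forces $a=b$. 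The identities $\mathfrak{i}_\alpha(\mathbf 0)=\hat{\mathbf 0}$ and $\mathfrak{i}_\alpha(\mathbf 1)=\hat{\mathbf 1}$ are then read straight from \eqref{ia} together with the convention that any function carrying a $\mathbf 0$-component is identified with $\hat{\mathbf 0}$. For the orthocomplement I would substitute $A=\mathfrak{i}_\alpha(a)$ into formula \eqref{cf0}: the summand $A_\alpha$ is exactly $\mathfrak{i}_\alpha(a^c)$, while for each $\beta\neq\alpha$ the summand $A_\beta$ has $\beta$-th component $\mathbf 1^c=\mathbf 0$ and hence collapses to $\hat{\mathbf 0}$, giving $(\mathfrak{i}_\alpha(a))^*=\mathfrak{i}_\alpha(a^c)\vee\hat{\mathbf 0}=\mathfrak{i}_\alpha(a^c)$.

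Next I would treat the meet. Each $\mathfrak{i}_\alpha(a)$ lies in $\mathsf{F}_\kappa$, hence is fixed by $**$ and belongs to $\bigotimes_{\beta}\mathsf{E}_\beta$; since infima in $\bigotimes_{\beta}\mathsf{E}_\beta=\mathsf{D}(\mathsf{F}_\kappa)^{**}$ coincide with infima in $\mathsf{D}(\mathsf{F}_\kappa)$, I can compute $\bigwedge\mathfrak{i}_\alpha(S)$ as the component-wise meet in $\mathsf{F}_\kappa$ (the analog of \eqref{vu} for $\mathsf{D}(\mathsf{F}_\kappa)$). Coordinate by coordinate this meet equals $\bigcap S$ in position $\alpha$ and $\mathbf 1$ elsewhere, which is exactly $\mathfrak{i}_\alpha(\bigcap S)$; this remains consistent when $\bigcap S=\mathbf 0$, since then both sides reduce to $\hat{\mathbf 0}$. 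Thus $\mathfrak{i}_\alpha(\bigcap S)=\bigwedge\mathfrak{i}_\alpha(S)$.

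Finally the join, which I expect to be the main obstacle, because $\coprod$ is not evaluated coordinate-wise but through the closure $(\,\cdot\,)^{**}$. The idea is to convert joins into meets via the De Morgan laws, which hold for $\bigotimes_{\beta}\mathsf{E}_\beta$ exactly as established for $\mathsf{U}_\kappa(\mathsf{E})$: since $\coprod_{a\in S}\mathfrak{i}_\alpha(a)$ lies in $\bigotimes_{\beta}\mathsf{E}_\beta$ and is therefore $**$-fixed, one has $\coprod_{a\in S}\mathfrak{i}_\alpha(a)=\bigl(\bigwedge_{a\in S}(\mathfrak{i}_\alpha(a))^*\bigr)^*$. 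I would then substitute the orthocomplement identity $(\mathfrak{i}_\alpha(a))^*=\mathfrak{i}_\alpha(a^c)$, apply the meet identity just proved, and use De Morgan in $\mathsf{E}_\alpha$ (so that $\bigcap_{a\in S}a^c=(\bigcup S)^c$) to collapse the right-hand side to $\bigl(\mathfrak{i}_\alpha((\bigcup S)^c)\bigr)^*=\bigl((\mathfrak{i}_\alpha(\bigcup S))^*\bigr)^*=\mathfrak{i}_\alpha(\bigcup S)$, the last step again because $\mathfrak{i}_\alpha(\bigcup S)$ is $**$-fixed. Assembling these four properties exhibits $\mathfrak{i}_\alpha$ as an injective homomorphism of orthocomplemented complete lattices, as claimed.
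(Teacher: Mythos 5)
Your proposal is correct and follows essentially the same route as the paper: injectivity and preservation of $\hat{\mathbf{0}},\hat{\mathbf{1}}$ read off from \eqref{ia}, the meet verified component-wise in $\mathsf{F}_{\kappa}$, the orthocomplement computed by substituting $\mathfrak{i}_{\alpha}\left(a\right)$ into \eqref{cf0} and observing that all summands with $\beta\neq\alpha$ collapse to $\hat{\mathbf{0}}$, and the join obtained from the other properties via the De Morgan laws. The only difference is one of detail: the paper dispatches the join-preservation step with the single phrase ``together with the De Morgan laws,'' whereas you spell out that derivation explicitly, which is a faithful expansion rather than a different argument.
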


\begin{proof}
We already know that each $\mathfrak{i}_{\alpha}$ is injective. Given
a subset $\left\{ a_{i}\right\} _{i\in I}\subseteq\mathsf{E}_{\alpha}$,
since
\[
\left(\mathfrak{i}_{\alpha}\left({\textstyle \bigcap_{i\in I}}a_{i}\right)\right)\left(\beta\right)=\begin{cases}
{\textstyle \bigcap_{i\in I}}a_{i}, & \beta=\alpha,\\
\mathbf{1}, & \beta\neq\alpha,
\end{cases}=\begin{cases}
{\textstyle \bigcap_{i\in I}}a_{i}, & \beta=\alpha,\\
{\textstyle \bigcap_{i\in I}}\mathbf{1}, & \beta\neq\alpha,
\end{cases}
\]
and
\[
\left({\textstyle \bigwedge_{i\in I}}\mathfrak{i}_{\alpha}\left(a_{i}\right)\right)\left(\beta\right)={\textstyle \bigcap_{i\in I}}\left(\left(\mathfrak{i}_{\alpha}\left(a_{i}\right)\right)\left(\beta\right)\right)=\begin{cases}
{\textstyle \bigcap_{i\in I}}a_{i}, & \beta=\alpha,\\
{\textstyle \bigcap_{i\in I}}\mathbf{1}, & \beta\neq\alpha,
\end{cases}
\]
then $\mathfrak{i}_{\alpha}$ respects the meet operation. On the
other hand, given $a\in\mathsf{E}_{\alpha}$, we have that
\begin{equation}
\left(\mathfrak{i}_{\alpha}\left(a^{c}\right)\right)\left(\beta\right)=\begin{cases}
a^{c}, & \beta=\alpha,\\
\mathbf{1}, & \beta\neq\alpha.
\end{cases}\label{ia1}
\end{equation}
Also, according to Eq. \eqref{cf} (for $A=\mathfrak{i}_{\alpha}\left(a\right)$)
\[
\left(\mathfrak{i}_{\alpha}\left(a\right)\right)^{*}={\textstyle \bigvee_{\beta\in\kappa}}\mathfrak{i}_{\beta}\left(\left(\left(\mathfrak{i}_{\alpha}\left(a\right)\right)\left(\beta\right)\right)^{c}\right),
\]
and, since $\left(\left(\mathfrak{i}_{\alpha}\left(a\right)\right)\left(\beta\right)\right)^{c}=\mathbf{0}$
for $\beta\neq\alpha$ (see Eq. \eqref{ia}), then 
\[
{\textstyle \bigvee_{\beta\in\kappa}}\mathfrak{i}_{\beta}\left(\left(\left(\mathfrak{i}_{\alpha}\left(a\right)\right)\left(\beta\right)\right)^{c}\right)=\mathfrak{i}_{\alpha}\left(\left(\left(\mathfrak{i}_{\alpha}\left(a\right)\right)\left(\alpha\right)\right)^{c}\right)=\mathfrak{i}_{\alpha}\left(a^{c}\right),
\]
i.e.
\[
\left(\mathfrak{i}_{\alpha}\left(a\right)\right)^{*}=\mathfrak{i}_{\alpha}\left(a^{c}\right).
\]
All that (together with the De Morgan laws) shows $\mathfrak{i}_{\alpha}$
is a homomorphism of logics.
\end{proof}
\begin{rem*}
Note that, in terms of $\mathfrak{i}_{\alpha}$, Eq. \eqref{cf0}
translates to
\begin{equation}
A^{*}={\textstyle \bigvee_{\alpha\in\kappa}}\mathfrak{i}_{\alpha}\left(\left(A\left(\alpha\right)\right)^{c}\right).\label{cf}
\end{equation}
\end{rem*}
The next result will be important latter.
\begin{prop}
\label{ju} For every $A\in\mathsf{F}_{\kappa}$ and $J\subseteq\kappa$,
\[
{\textstyle \coprod_{\alpha\in J}}\mathfrak{i}_{\alpha}\left(A\left(\alpha\right)\right)={\textstyle \bigvee_{\alpha\in J}}\mathfrak{i}_{\alpha}\left(A\left(\alpha\right)\right).
\]
\end{prop}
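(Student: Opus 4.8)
The plan is to realize the right-hand side $\mathbb{B}:=\bigvee_{\alpha\in J}\mathfrak{i}_{\alpha}\left(A\left(\alpha\right)\right)$ as an element of the image of $*$. By \eqref{el-1} the image $\left\{ \mathbb{C}^{*}:\mathbb{C}\in\mathsf{D}\left(\mathsf{F}_{\kappa}\right)\right\} $ coincides with the set of $**$-fixed points of $\bigotimes_{\beta\in\kappa}\mathsf{E}_{\beta}=\mathsf{D}\left(\mathsf{F}_{\kappa}\right)^{**}$, so once I exhibit $\mathbb{B}=\mathbb{C}^{*}$ for a suitable $\mathbb{C}$, it follows that $\mathbb{B}^{**}=\mathbb{B}$. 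The conclusion is then immediate: each $\mathfrak{i}_{\alpha}\left(A\left(\alpha\right)\right)$ lies in $\mathsf{F}_{\kappa}$ and hence is $**$-fixed (as observed in the construction of $\bigotimes_{\beta\in\kappa}\mathsf{E}_{\beta}$, the analogue of \eqref{assa}), so $S=\left\{ \mathfrak{i}_{\alpha}\left(A\left(\alpha\right)\right):\alpha\in J\right\} \subseteq\bigotimes_{\beta\in\kappa}\mathsf{E}_{\beta}$, and by the definition $\coprod S=\left(\bigvee S\right)^{**}$ one gets $\coprod_{\alpha\in J}\mathfrak{i}_{\alpha}\left(A\left(\alpha\right)\right)=\mathbb{B}^{**}=\mathbb{B}$, which is exactly the claimed equality.

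To produce $\mathbb{C}$, I would take the single function $C\in\mathsf{F}_{\kappa}$ defined by $C\left(\gamma\right)=\left(A\left(\gamma\right)\right)^{c}$ for $\gamma\in J$ and $C\left(\gamma\right)=\mathbf{1}$ for $\gamma\notin J$; this is a legitimate element of $\mathsf{F}_{\kappa}$ since $\left(A\left(\gamma\right)\right)^{c}\in\mathsf{E}_{\gamma}$. Applying the orthocomplement formula \eqref{cf0} (equivalently \eqref{cf}) to $C$ gives $C^{*}=\bigvee_{\beta\in\kappa}\mathfrak{i}_{\beta}\left(\left(C\left(\beta\right)\right)^{c}\right)$. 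Evaluating the letters, $\left(C\left(\beta\right)\right)^{c}=\left(\left(A\left(\beta\right)\right)^{c}\right)^{c}=A\left(\beta\right)$ for $\beta\in J$ (using that $^{c}$ is an involution on $\mathsf{E}_{\beta}$), while $\left(C\left(\beta\right)\right)^{c}=\mathbf{1}^{c}=\mathbf{0}$ for $\beta\notin J$, so that $\mathfrak{i}_{\beta}\left(\left(C\left(\beta\right)\right)^{c}\right)=\mathfrak{i}_{\beta}\left(\mathbf{0}\right)=\hat{\mathbf{0}}$. Since $\hat{\mathbf{0}}$ is the bottom element and drops out of the join, the expression collapses to $C^{*}=\bigvee_{\beta\in J}\mathfrak{i}_{\beta}\left(A\left(\beta\right)\right)=\mathbb{B}$, as required.

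The steps are essentially bookkeeping and I do not expect a genuine obstacle; the two points needing attention are invoking the $\hat{\mathbf{0}}$-identification of $\mathsf{F}_{\kappa}$ to discard the terms with $\beta\notin J$, and using involutivity of $^{c}$ to undo the double complement. The degenerate cases are harmless: if $A\left(\gamma\right)=\mathbf{1}$ for some $\gamma\in J$ then $C\left(\gamma\right)=\mathbf{0}$ forces $C=\hat{\mathbf{0}}$ and both sides equal $\hat{\mathbf{1}}$, while a vanishing $A\left(\gamma\right)=\mathbf{0}$ simply contributes $\hat{\mathbf{0}}$ to each side; in all cases the computation above is unaffected. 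As an alternative not relying on \eqref{el-1}, one could instead compute $\mathbb{B}^{*}$ directly from \eqref{fc}: since $\mathfrak{i}_{\alpha}\left(A\left(\alpha\right)\right)\left(\gamma\right)=\mathbf{1}$ for $\gamma\neq\alpha$, every term $\mathbb{A}_{f,J}$ reduces to $\hat{\mathbf{0}}$ unless $f$ is the inclusion $J\hookrightarrow\kappa$, whence $\mathbb{B}^{*}=C$ and then $\mathbb{B}^{**}=C^{*}=\mathbb{B}$ by the same evaluation.
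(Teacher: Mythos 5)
Your proposal is correct and is essentially the paper's own argument: the auxiliary element $C$ you construct (equal to $\left(A\left(\gamma\right)\right)^{c}$ on $J$ and $\mathbf{1}$ off $J$) is precisely what the paper obtains as $\left({\textstyle \bigvee_{\alpha\in J}}\mathfrak{i}_{\alpha}\left(A\left(\alpha\right)\right)\right)^{*}$, and your evaluation of $C^{*}$ via \eqref{cf0}, including the discarding of the $\hat{\mathbf{0}}$ terms for $\beta\notin J$, is the same computation. The only cosmetic difference is that you posit $C$ up front and invoke \eqref{el-1} (the image of $*$ coincides with the $**$-fixed points) to conclude, whereas the paper first verifies $\mathbb{B}^{*}=C$ and then computes $\mathbb{B}^{**}=C^{*}=\mathbb{B}$ directly --- which is exactly the ``alternative'' you sketch in your last sentence.
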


\begin{proof}
We must show that
\[
\left({\textstyle \bigvee_{\alpha\in J}}\mathfrak{i}_{\alpha}\left(A\left(\alpha\right)\right)\right)^{**}={\textstyle \bigvee_{\alpha\in J}}\mathfrak{i}_{\alpha}\left(A\left(\alpha\right)\right).
\]
On the one hand, 
\begin{equation}
\left({\textstyle \bigvee_{\alpha\in J}}\mathfrak{i}_{\alpha}\left(A\left(\alpha\right)\right)\right)^{*}={\textstyle \bigwedge_{\alpha\in J}}\left(\mathfrak{i}_{\alpha}\left(A\left(\alpha\right)\right)\right)^{*}={\textstyle \bigcap_{\alpha\in J}}\mathfrak{i}_{\alpha}\left(\left(A\left(\alpha\right)\right)^{c}\right)\in\mathsf{F}_{\kappa}.\label{xa}
\end{equation}
On the other hand, given $B\in\mathsf{F}_{\kappa}$ (according to
\eqref{cf0}), 
\[
B^{*}={\textstyle \bigvee_{\alpha\in\kappa}}B_{\alpha},\quad\textrm{with}\quad B_{\alpha}\left(\beta\right)=\begin{cases}
\left(B\left(\alpha\right)\right)^{c}, & \beta=\alpha,\\
\mathbf{1}, & \beta\neq\alpha.
\end{cases}
\]
So, for 
\begin{equation}
B={\textstyle \bigcap_{\gamma\in J}}\mathfrak{i}_{\gamma}\left(\left(A\left(\gamma\right)\right)^{c}\right),\label{xb}
\end{equation}
since
\[
\left({\textstyle \bigcap_{\gamma\in J}}\mathfrak{i}_{\gamma}\left(\left(A\left(\gamma\right)\right)^{c}\right)\right)\left(\alpha\right)={\textstyle \bigcap_{\gamma\in J}}\left(\mathfrak{i}_{\gamma}\left(\left(A\left(\gamma\right)\right)^{c}\right)\right)\left(\alpha\right)
\]
and
\[
\left(\mathfrak{i}_{\gamma}\left(\left(A\left(\gamma\right)\right)^{c}\right)\right)\left(\alpha\right)=\begin{cases}
\left(A\left(\alpha\right)\right)^{c}, & \alpha=\gamma,\\
\mathbf{1}, & \alpha\neq\gamma,
\end{cases}
\]
then
\[
B\left(\alpha\right)=\begin{cases}
\left(A\left(\alpha\right)\right)^{c}, & \alpha\in J,\\
\mathbf{1}, & \alpha\notin J,
\end{cases}
\]
and consequently, 
\[
B_{\alpha}\left(\beta\right)=\begin{cases}
\left(B\left(\alpha\right)\right)^{c}=A\left(\alpha\right), & \beta=\alpha,\\
\mathbf{1}, & \beta\neq\alpha,
\end{cases},
\]
if $\alpha\in J$, and
\[
B_{\alpha}\left(\beta\right)=\begin{cases}
\left(\mathbf{1}\right)^{c}=\mathbf{0}, & \beta=\alpha,\\
\mathbf{1}, & \beta\neq\alpha,
\end{cases}=\hat{\mathbf{0}},
\]
if $\alpha\notin J$. This implies that
\[
B_{\alpha}=\begin{cases}
\mathfrak{i}_{\alpha}\left(A\left(\alpha\right)\right), & \alpha\in J,\\
\hat{\mathbf{0}}, & \alpha\notin J,
\end{cases}
\]
and then
\begin{equation}
B^{*}={\textstyle \bigvee_{\alpha\in\kappa}}B_{\alpha}={\textstyle \bigvee_{\alpha\in J}}\mathfrak{i}_{\alpha}\left(A\left(\alpha\right)\right).\label{xc}
\end{equation}
Summing up, from \eqref{xa}, \eqref{xb} and \eqref{xc},
\[
\left({\textstyle \bigvee_{\alpha\in J}}\mathfrak{i}_{\alpha}\left(A\left(\alpha\right)\right)\right)^{**}=B^{*}={\textstyle \bigvee_{\alpha\in J}}\mathfrak{i}_{\alpha}\left(A\left(\alpha\right)\right),
\]
what ends the proof. 
\end{proof}

\subsubsection{Distributive families}

Given a complete lattice $\mathsf{L}$ and a family of subsets $\mathfrak{S}\subseteq\mathcal{P}\left(\mathsf{L}\right)$,
consider its related set of choice functions
\[
\mathcal{C}\left(\mathfrak{S}\right)=\left\{ f:\mathfrak{S}\rightarrow\cup_{S\in\mathfrak{S}}S\quad/\quad f\left(S\right)\in S\right\} .
\]

\begin{defn}
\label{mj} We shall say the family $\mathfrak{S}$ is \textbf{meet-join
distributive} if, for every sub-family $\mathfrak{T}\subseteq\mathfrak{S}$,
\begin{equation}
\bigwedge\left\{ \bigvee S:S\in\mathfrak{T}\right\} =\bigvee\left\{ \bigwedge\mathsf{Im}f:f\in\mathcal{C}\left(\mathfrak{T}\right)\right\} .\label{mjcd}
\end{equation}
\end{defn}

Let us go back to the logic ${\textstyle \bigotimes_{\alpha\in\kappa}}\mathsf{E}_{\alpha}$
and, for each $A\in\mathsf{F}_{\kappa}$, consider the subset
\[
S_{A}\coloneqq\left\{ \mathfrak{i}_{\alpha}\left(A\left(\alpha\right)\right):\alpha\in\kappa\right\} \subseteq{\textstyle \bigotimes_{\alpha\in\kappa}}\mathsf{E}_{\alpha}.
\]

\begin{prop}
The family 
\begin{equation}
\mathfrak{S}\coloneqq\left\{ S_{A}:A\in\mathsf{F}_{\kappa}\right\} \subseteq\mathcal{P}\left({\textstyle \bigotimes_{\alpha\in\kappa}}\mathsf{E}_{\alpha}\right)\label{Si}
\end{equation}
is meet-join distributive.
\end{prop}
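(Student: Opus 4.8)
The plan is to reduce the meet-join distributivity of $\mathfrak{S}$ to the complete distributivity of the ambient completely distributive lattice $\mathsf{D}\left(\mathsf{F}_{\kappa}\right)$, using Proposition \ref{ju} as the bridge between the two joins $\coprod$ and $\bigvee$. First I would fix a subfamily $\mathfrak{T}\subseteq\mathfrak{S}$ and, choosing a representative for each of its members, write $\mathfrak{T}=\left\{ S_{A^{j}}:j\in J\right\}$ with $A^{j}\in\mathsf{F}_{\kappa}$, so that $S_{A^{j}}=\left\{ \mathfrak{i}_{\alpha}\left(A^{j}\left(\alpha\right)\right):\alpha\in\kappa\right\}$. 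Since the join of the lattice $\bigotimes_{\alpha\in\kappa}\mathsf{E}_{\alpha}$ is $\coprod$, Proposition \ref{ju} (with $J=\kappa$) yields $\coprod S_{A^{j}}=\coprod_{\alpha\in\kappa}\mathfrak{i}_{\alpha}\left(A^{j}\left(\alpha\right)\right)=\bigvee_{\alpha\in\kappa}\mathfrak{i}_{\alpha}\left(A^{j}\left(\alpha\right)\right)$, the last join being the one of $\mathsf{D}\left(\mathsf{F}_{\kappa}\right)$. Thus the left-hand side of \eqref{mjcd} equals $\bigwedge_{j\in J}\bigvee_{\alpha\in\kappa}\mathfrak{i}_{\alpha}\left(A^{j}\left(\alpha\right)\right)$, and this meet may be computed indifferently in $\bigotimes_{\alpha\in\kappa}\mathsf{E}_{\alpha}$ or in $\mathsf{D}\left(\mathsf{F}_{\kappa}\right)$, because infima are preserved when passing to the image of the closure $**$.

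Next I would invoke the complete distributivity of $\mathsf{D}\left(\mathsf{F}_{\kappa}\right)$ with all index sets equal to $\kappa$, so that $F=\left\{ \sigma:J\rightarrow\kappa\right\}$, obtaining
\[
\bigwedge_{j\in J}\bigvee_{\alpha\in\kappa}\mathfrak{i}_{\alpha}\left(A^{j}\left(\alpha\right)\right)=\bigvee_{\sigma\in F}\bigwedge_{j\in J}\mathfrak{i}_{\sigma\left(j\right)}\left(A^{j}\left(\sigma\left(j\right)\right)\right),
\]
where the join on the right is again that of $\mathsf{D}\left(\mathsf{F}_{\kappa}\right)$. I would then identify the choice functions: each $f\in\mathcal{C}\left(\mathfrak{T}\right)$ selects $f\left(S_{A^{j}}\right)=\mathfrak{i}_{\sigma\left(j\right)}\left(A^{j}\left(\sigma\left(j\right)\right)\right)$ for some $\sigma\left(j\right)\in\kappa$, whence $\bigwedge\mathsf{Im}f=\bigwedge_{j\in J}\mathfrak{i}_{\sigma\left(j\right)}\left(A^{j}\left(\sigma\left(j\right)\right)\right)$, and conversely every $\sigma\in F$ arises from such an $f$. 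Therefore the set $\left\{ \bigwedge\mathsf{Im}f:f\in\mathcal{C}\left(\mathfrak{T}\right)\right\}$ coincides with $\left\{ \bigwedge_{j\in J}\mathfrak{i}_{\sigma\left(j\right)}\left(A^{j}\left(\sigma\left(j\right)\right)\right):\sigma\in F\right\}$, any repetitions being harmless since only the underlying set matters for the supremum.

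The crucial step, which I expect to be the main obstacle, is to replace the $\mathsf{D}\left(\mathsf{F}_{\kappa}\right)$-join $\bigvee$ appearing on the right above by the $\bigotimes_{\alpha\in\kappa}\mathsf{E}_{\alpha}$-join $\coprod$ required by \eqref{mjcd}. Here I would observe that the left-hand side $\mathbb{M}\coloneqq\bigwedge_{j\in J}\coprod S_{A^{j}}$ is a meet of elements of $\bigotimes_{\alpha\in\kappa}\mathsf{E}_{\alpha}=\mathsf{D}\left(\mathsf{F}_{\kappa}\right)^{**}$; since the image of a closure is closed under arbitrary meets (infima coincide), $\mathbb{M}$ is itself $**$-closed, i.e. $\mathbb{M}=\mathbb{M}^{**}$. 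The complete-distributivity identity above shows $\mathbb{M}=\bigvee_{\sigma\in F}\bigwedge_{j\in J}\mathfrak{i}_{\sigma\left(j\right)}\left(A^{j}\left(\sigma\left(j\right)\right)\right)$, so applying $**$ and using $\coprod S=\left(\bigvee S\right)^{**}$ gives
\[
\coprod_{\sigma\in F}\bigwedge_{j\in J}\mathfrak{i}_{\sigma\left(j\right)}\left(A^{j}\left(\sigma\left(j\right)\right)\right)=\left(\bigvee_{\sigma\in F}\bigwedge_{j\in J}\mathfrak{i}_{\sigma\left(j\right)}\left(A^{j}\left(\sigma\left(j\right)\right)\right)\right)^{**}=\mathbb{M}^{**}=\mathbb{M},
\]
which is precisely \eqref{mjcd}. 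In brief, complete distributivity is already available in $\mathsf{D}\left(\mathsf{F}_{\kappa}\right)$; the only genuine point to verify is that the value of the meet side lies in the skeleton $\bigotimes_{\alpha\in\kappa}\mathsf{E}_{\alpha}$, so that $\bigvee$ and $\coprod$ agree on the element in question.
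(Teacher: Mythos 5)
Your proposal is correct and follows essentially the same route as the paper: convert $\coprod S_{A}$ into a $\mathsf{D}\left(\mathsf{F}_{\kappa}\right)$-join via Proposition \ref{ju}, apply the complete distributivity of $\mathsf{D}\left(\mathsf{F}_{\kappa}\right)$, and then use the fact that the meet side is $**$-closed to upgrade the resulting $\bigvee$ to $\coprod$. The only cosmetic difference is that the paper regroups $\bigwedge_{j}\mathfrak{i}_{\sigma\left(j\right)}\left(A^{j}\left(\sigma\left(j\right)\right)\right)$ as $\bigcap_{\alpha\in\kappa}\mathfrak{i}_{\alpha}\left(\bigcap_{\sigma\left(j\right)=\alpha}A^{j}\left(\alpha\right)\right)$ before concluding, which your argument does not need.
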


\begin{proof}
Every sub-family $\mathfrak{T}$ of $\mathfrak{S}$ is given by subsets
of the form
\[
\left\{ \mathfrak{i}_{\alpha}\left(A^{i}\left(\alpha\right)\right):\alpha\in\kappa\right\} ,\quad i\in I,
\]
where $I$ is an index set. In these terms, condition \eqref{mjcd}
reads
\[
{\textstyle \bigwedge_{i\in I}\coprod_{\alpha\in\kappa}}\mathfrak{i}_{\alpha}\left(A^{i}\left(\alpha\right)\right)={\textstyle \coprod_{f\in F}\bigcap_{i\in I}}\mathfrak{i}_{f\left(i\right)}\left(A^{i}\left(f\left(i\right)\right)\right),
\]
with $F=\left\{ f:I\rightarrow\kappa\right\} $. Since we can write
\[
{\textstyle \bigcap_{i\in I}}\mathfrak{i}_{f\left(i\right)}\left(A^{i}\left(f\left(i\right)\right)\right)={\textstyle \bigcap_{\alpha\in\kappa}\bigcap_{f\left(i\right)=\alpha}}\mathfrak{i}_{\alpha}\left(A^{i}\left(\alpha\right)\right)={\textstyle \bigcap_{\alpha\in\kappa}}\mathfrak{i}_{\alpha}\left({\textstyle \bigcap_{f\left(i\right)=\alpha}}A^{i}\left(\alpha\right)\right),
\]
we must show that
\begin{equation}
{\textstyle \bigwedge_{i\in I}\coprod_{\alpha\in\kappa}}\mathfrak{i}_{\alpha}\left(A^{i}\left(\alpha\right)\right)={\textstyle \coprod_{f\in F}\bigcap_{\alpha\in\kappa}}\mathfrak{i}_{\alpha}\left({\textstyle \bigcap_{f\left(i\right)=\alpha}}A^{i}\left(\alpha\right)\right).\label{ed}
\end{equation}
According to Proposition \ref{ju}, for all $i\in I$,
\begin{equation}
{\textstyle \coprod_{\alpha\in\kappa}}\mathfrak{i}_{\alpha}\left(A^{i}\left(\alpha\right)\right)={\textstyle \bigvee_{\alpha\in\kappa}}\mathfrak{i}_{\alpha}\left(A^{i}\left(\alpha\right)\right).\label{ea}
\end{equation}
Then, using the complete distributivity of $\mathsf{D}\left(\mathsf{F}_{\kappa}\right)$,
\begin{equation}
{\textstyle \bigwedge_{i\in I}\bigvee_{\alpha\in\kappa}}\mathfrak{i}_{\alpha}\left(A^{i}\left(\alpha\right)\right)={\textstyle \bigvee_{f\in F}\bigcap_{\alpha\in\kappa}}\mathfrak{i}_{\alpha}\left({\textstyle \bigcap_{f\left(i\right)=\alpha}}A^{i}\left(\alpha\right)\right).\label{eb}
\end{equation}
On the other hand, from \eqref{ea} we have that
\[
\left({\textstyle \bigwedge_{i\in I}\bigvee_{\alpha\in\kappa}}\mathfrak{i}_{\alpha}\left(A^{i}\left(\alpha\right)\right)\right)^{**}={\textstyle \bigwedge_{i\in I}\bigvee_{\alpha\in\kappa}}\mathfrak{i}_{\alpha}\left(\left(A^{i}\left(\alpha\right)\right)\right),
\]
so the same is true for ${\textstyle \bigvee_{f\in F}\bigcap_{\alpha\in\kappa}}\mathfrak{i}_{\alpha}\left({\textstyle \bigcap_{f\left(i\right)=\alpha}}A^{i}\left(\alpha\right)\right)$,
what implies that
\begin{equation}
{\textstyle \bigvee_{f\in F}\bigcap_{\alpha\in\kappa}}\mathfrak{i}_{\alpha}\left({\textstyle \bigcap_{f\left(i\right)=\alpha}}A^{i}\left(\alpha\right)\right)={\textstyle \coprod_{f\in F}\bigcap_{\alpha\in\kappa}}\mathfrak{i}_{\alpha}\left({\textstyle \bigcap_{f\left(i\right)=\alpha}}A^{i}\left(\alpha\right)\right).\label{ec}
\end{equation}
Combining \eqref{ea}, \eqref{eb} and \eqref{ec}, we have precisely
\eqref{ed}, as wanted.
\end{proof}
\begin{rem*}
At first appearance, the property fulfilled by the family $\mathfrak{S}$,
the meet-join distributivity, seems to be stronger than the \textit{ortho-distributivity
}condition (see Ref. \cite{mato}, p. 271). But if we look more closely,
we can see that the meet and joins, in each condition, run over different
set of indices.
\end{rem*}

\subsection{The universal property}

Related to the family $\left\{ \mathsf{E}_{\alpha}\right\} _{\alpha\in\kappa}$,
consider the pairs $\left(e,\mathsf{T}\right)$, where $\mathsf{T}$
is a logic $\left(\mathsf{T},\leq,\vee,\wedge,^{\bullet},\mathbf{0},\mathbf{1}\right)$
and $e=\left\{ e_{\alpha}\right\} _{\alpha\in\kappa}$ is a family
of logic embeddings $e_{\alpha}:\mathsf{E}_{\alpha}\rightarrow\mathsf{T}$,
i.e.

\[
e_{\alpha}\left(\bigcup S\right)=\bigvee e_{\alpha}\left(S\right),\quad e_{\alpha}\left(\bigcap S\right)=\bigwedge e_{\alpha}\left(S\right),
\]
for all $S\subseteq\mathsf{E}_{\alpha}$, and 
\[
e_{\alpha}\left(\mathbf{0}\right)=\mathbf{0},\quad e_{\alpha}\left(\mathbf{1}\right)=\mathbf{1},\quad e_{\alpha}\left(a^{c}\right)=\left(e_{\alpha}\left(a\right)\right)^{\bullet},
\]
for all $a\in\mathsf{E}_{\alpha}$. Assume in addition that the family
$\mathfrak{S}_{e}\subseteq\mathcal{P}\left(\mathsf{T}\right)$ given
by the sets
\[
S_{A}=\left\{ e_{\alpha}\left(A\left(\alpha\right)\right):\alpha\in\kappa\right\} ,\quad A\in\mathsf{F}_{\kappa},
\]
is meet-join distributive. This means that (see Definition \ref{mj}),
given a subfamily
\[
\left\{ e_{\alpha}\left(B^{i}\left(\alpha\right)\right):\alpha\in\kappa\right\} ,\quad i\in I,
\]
being $I$ an index set, we have the identity (compare to \eqref{ed})
\begin{equation}
{\textstyle \bigwedge_{i\in I}\bigvee_{\alpha\in\kappa}}e_{\alpha}\left(B^{i}\left(\alpha\right)\right)={\textstyle \bigvee_{f\in F}\bigwedge_{\alpha\in\kappa}}e_{\alpha}\left({\textstyle \bigcap_{f\left(i\right)=\alpha}}B^{i}\left(\alpha\right)\right).\label{edb}
\end{equation}

\bigskip{}

As an example of above pairs we have $\left(\mathfrak{i},{\textstyle \bigotimes_{\alpha\in\kappa}}\mathsf{E}_{\alpha}\right)$,
where $\mathfrak{i}$ denotes the set of logic embeddings $\mathfrak{i}_{\alpha}$
defined in the previous section (see Eq. \eqref{ia}). The related
meet-join distributive family is $\mathfrak{S}_{\mathfrak{i}}\coloneqq\mathfrak{S}$
(see \eqref{Si}). For such a pair, we have the following universal
property.
\begin{thm}
There exists a unique homomorphism of logics $t:{\textstyle \bigotimes_{\alpha\in\kappa}}\mathsf{E}_{\alpha}\rightarrow\mathsf{T}$
such that 
\begin{equation}
t\left(\mathfrak{i}_{\alpha}\left(a\right)\right)=e_{\alpha}\left(a\right),\quad\forall a\in\mathsf{E}_{\alpha},\quad\forall\alpha\in\kappa.\label{tia}
\end{equation}
\end{thm}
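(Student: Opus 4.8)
The plan is to build $t$ by first fixing its values on $\mathsf{F}_{\kappa}$, extending to $\mathsf{D}\left(\mathsf{F}_{\kappa}\right)$, and finally restricting to the skeleton $\bigotimes_{\alpha\in\kappa}\mathsf{E}_{\alpha}=\mathsf{D}\left(\mathsf{F}_{\kappa}\right)^{**}$. Since a logic homomorphism must preserve arbitrary meets, and since $A=\bigwedge_{\alpha\in\kappa}\mathfrak{i}_{\alpha}\left(A\left(\alpha\right)\right)$ holds in $\bigotimes_{\alpha\in\kappa}\mathsf{E}_{\alpha}$ (the component-wise meet returns $A$, $\mathsf{F}_{\kappa}$ is complete so these meets coincide with those of $\mathsf{D}\left(\mathsf{F}_{\kappa}\right)$, and infima agree on the skeleton), the value of $t$ on a generator $A\in\mathsf{F}_{\kappa}$ is forced to be $\bar{e}\left(A\right)\coloneqq\bigwedge_{\alpha\in\kappa}e_{\alpha}\left(A\left(\alpha\right)\right)$. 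First I would check $\bar{e}$ is well defined (if $A\left(\alpha\right)=\mathbf{0}$ for some $\alpha$ then $e_{\alpha}\left(A\left(\alpha\right)\right)=\mathbf{0}$ forces $\bar{e}\left(A\right)=\mathbf{0}=\bar{e}\left(\hat{\mathbf{0}}\right)$) and order-preserving. Using the description of the elements of $\mathsf{D}\left(\mathsf{F}_{\kappa}\right)$ as $\bigvee_{i\in I}A^{i}$, I set $\tilde{t}\left(\bigvee_{i\in I}A^{i}\right)\coloneqq\bigvee_{i\in I}\bar{e}\left(A^{i}\right)$ (join in $\mathsf{T}$). Well-definedness on $\backsim$-classes follows from the order characterisation of $\backsim$ together with monotonicity of $\bar{e}$ and idempotency of the join, and from the computation of suprema in $\mathsf{D}\left(\mathsf{F}_{\kappa}\right)$ one sees at once that $\tilde{t}$ preserves arbitrary joins. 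Note $\tilde{t}\left(\mathfrak{i}_{\alpha}\left(a\right)\right)=e_{\alpha}\left(a\right)$, since $e_{\beta}\left(\mathbf{1}\right)=\mathbf{1}$.

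The crux is the intertwining identity $\tilde{t}\left(\mathbb{A}^{*}\right)=\left(\tilde{t}\,\mathbb{A}\right)^{\bullet}$ for every $\mathbb{A}=\bigvee_{i\in I}A^{i}$, and this is exactly where the meet-join distributivity of $\mathfrak{S}_{e}$ enters. Expanding the left side with the formula \eqref{fc} for $*$ and using that each $e_{\alpha}$ preserves meets and complements, I obtain $\tilde{t}\left(\mathbb{A}^{*}\right)=\bigvee_{f\in F}\bigwedge_{i\in I}\left(e_{f\left(i\right)}\left(A^{i}\left(f\left(i\right)\right)\right)\right)^{\bullet}$ with $F=\left\{ f:I\rightarrow\kappa\right\}$, where the reindexing $\bigwedge_{\alpha}\bigwedge_{f\left(i\right)=\alpha}=\bigwedge_{i\in I}$ is valid because indices outside $\mathsf{Im}\,f$ contribute only empty meets equal to $\mathbf{1}$. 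Expanding the right side with the De Morgan laws in $\mathsf{T}$ gives $\left(\tilde{t}\,\mathbb{A}\right)^{\bullet}=\bigwedge_{i\in I}\bigvee_{\alpha\in\kappa}\left(e_{\alpha}\left(A^{i}\left(\alpha\right)\right)\right)^{\bullet}$. Writing $B^{i}\left(\alpha\right)\coloneqq\left(A^{i}\left(\alpha\right)\right)^{c}$, so that $e_{\alpha}\left(B^{i}\left(\alpha\right)\right)=\left(e_{\alpha}\left(A^{i}\left(\alpha\right)\right)\right)^{\bullet}$, the required equality becomes
\[
\bigwedge_{i\in I}\bigvee_{\alpha\in\kappa}e_{\alpha}\left(B^{i}\left(\alpha\right)\right)=\bigvee_{f\in F}\bigwedge_{i\in I}e_{f\left(i\right)}\left(B^{i}\left(f\left(i\right)\right)\right),
\]
which is precisely condition \eqref{mjcd} of Definition \ref{mj} applied to the sub-family $\left\{ S_{B^{i}}:i\in I\right\} \subseteq\mathfrak{S}_{e}$. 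Thus the hypothesis on $\mathfrak{S}_{e}$ is not merely convenient but is equivalent to $*$-compatibility of $\tilde{t}$; I expect this translation, and the associated bookkeeping, to be the main obstacle of the proof.

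With the intertwining established, the restriction $t\coloneqq\tilde{t}|_{\bigotimes_{\alpha}\mathsf{E}_{\alpha}}$ is the desired homomorphism. Since $^{\bullet}$ is an involution on $\mathsf{T}$, for any $\mathbb{A}\in\mathsf{D}\left(\mathsf{F}_{\kappa}\right)$ one has $\tilde{t}\left(\mathbb{A}^{**}\right)=\left(\tilde{t}\,\mathbb{A}\right)^{\bullet\bullet}=\tilde{t}\left(\mathbb{A}\right)$; hence for $S\subseteq\bigotimes_{\alpha}\mathsf{E}_{\alpha}$, using $\coprod S=\left(\bigvee S\right)^{**}$ and join-preservation, $t\left(\coprod S\right)=\tilde{t}\left(\bigvee S\right)=\bigvee_{\mathsf{T}}t\left(S\right)$, so $t$ preserves joins. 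It preserves $*$ directly by the intertwining (restricted to the skeleton, where $\mathbb{A}^{*}\in\bigotimes_{\alpha}\mathsf{E}_{\alpha}$ as well), and it preserves meets by one further use of De Morgan in $\mathsf{T}$ and in $\bigotimes_{\alpha}\mathsf{E}_{\alpha}$. Together with $t\left(\mathfrak{i}_{\alpha}\left(a\right)\right)=e_{\alpha}\left(a\right)$ this yields existence with property \eqref{tia}. Uniqueness is then immediate: any logic homomorphism $t'$ satisfying \eqref{tia} is forced, since $A=\bigwedge_{\alpha}\mathfrak{i}_{\alpha}\left(A\left(\alpha\right)\right)$ and meet-preservation give $t'\left(A\right)=\bar{e}\left(A\right)$ on each $A\in\mathsf{F}_{\kappa}$, and every element of $\bigotimes_{\alpha}\mathsf{E}_{\alpha}$ being a join $\coprod_{i\in I}A^{i}$, join-preservation gives $t'=t$ throughout.
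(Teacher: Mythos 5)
Your proposal is correct and follows essentially the same route as the paper: define $\tilde{t}$ on $\mathsf{D}\left(\mathsf{F}_{\kappa}\right)$ by $\bigvee_{i\in I}\bigwedge_{\alpha\in\kappa}e_{\alpha}\left(A^{i}\left(\alpha\right)\right)$, check well-definedness and join-preservation, reduce the $*$-compatibility to the meet-join distributivity of $\mathfrak{S}_{e}$ applied to the family built from $B^{i}\left(\alpha\right)=\left(A^{i}\left(\alpha\right)\right)^{c}$ (exactly the paper's footnoted substitution), restrict to the skeleton, and derive uniqueness from $A=\bigwedge_{\alpha}\mathfrak{i}_{\alpha}\left(A\left(\alpha\right)\right)$. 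Your explicit check that $\bar{e}$ respects the identification of tuples with a $\mathbf{0}$ component is a small point of care the paper leaves implicit, but it does not change the argument.
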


\begin{proof}
Consider the function $\tilde{t}:\mathsf{D}\left(\mathsf{F}_{\kappa}\right)\rightarrow\mathsf{T}$
such that 
\begin{equation}
\tilde{t}\left({\textstyle \bigvee_{i\in I}}A^{i}\right)={\textstyle \bigvee_{i\in I}}{\textstyle \bigwedge_{\alpha\in\kappa}}e_{\alpha}\left(A^{i}\left(\alpha\right)\right).\label{dt}
\end{equation}
Let us see $\tilde{t}$ is well-defined. If ${\textstyle \bigvee_{i\in I}}A^{i}={\textstyle \bigvee_{j\in J}}B^{j}$,
this means that for all $i\in I$ there exists $j\left(i\right)\in J$
such that $A^{i}\subseteq B^{j\left(i\right)}$, i.e. 
\[
A^{i}\left(\alpha\right)\subseteq B^{j\left(i\right)}\left(\alpha\right),\quad\forall\alpha\in\kappa,
\]
and vice versa (changing $A$'s by $B$'s). So, 
\[
\begin{array}{lll}
\tilde{t}\left({\textstyle \bigvee_{i\in I}}A^{i}\right) & = & {\textstyle \bigvee_{i\in I}}{\textstyle \bigwedge_{\alpha\in\kappa}}e_{\alpha}\left(A^{i}\left(\alpha\right)\right)\leq{\textstyle \bigvee_{i\in I}}{\textstyle \bigwedge_{\alpha\in\kappa}}e_{\alpha}\left(B^{j\left(i\right)}\left(\alpha\right)\right)\\
\\
 & \leq & {\textstyle \bigvee_{j\in J}}{\textstyle \bigwedge_{\alpha\in\kappa}}e_{\alpha}\left(B^{j}\left(\alpha\right)\right)=\tilde{t}\left({\textstyle \bigvee_{j\in J}}B^{j}\right),
\end{array}
\]
and the other inequality follows by changing $A$'s by $B$'s. 

Note that, if $A\in\mathsf{F}_{\kappa}$, then we can write $A=\bigvee_{i\in I}A^{i}$
with $I=\left\{ s\right\} $ and $A^{s}=A$. Thus, according to \eqref{dt},
\begin{equation}
\tilde{t}\left(A\right)={\textstyle \bigwedge_{\alpha\in\kappa}}e_{\alpha}\left(A\left(\alpha\right)\right).\label{dt1}
\end{equation}
In particular, given $a\in\mathsf{E}_{\beta}$,
\[
\tilde{t}\left(\mathfrak{i}_{\beta}\left(a\right)\right)={\textstyle \bigwedge_{\alpha\in\kappa}}e_{\alpha}\left(\left(\mathfrak{i}_{\beta}\left(a\right)\right)\left(\alpha\right)\right).
\]
Since $\left(\mathfrak{i}_{\beta}\left(a\right)\right)\left(\alpha\right)=\mathbf{1}$
when $\alpha\neq\beta$ (see \eqref{ia}), then 
\begin{equation}
\tilde{t}\left(\mathfrak{i}_{\beta}\left(a\right)\right)=e_{\beta}\left(\left(\mathfrak{i}_{\beta}\left(a\right)\right)\left(\beta\right)\right)=e_{\beta}\left(a\right),\quad\forall a\in\mathsf{E}_{\beta},\quad\forall\beta\in\kappa,\label{tia2}
\end{equation}
because $\left(\mathfrak{i}_{\beta}\left(a\right)\right)\left(\beta\right)=a$
(see \eqref{ia} again).

From \eqref{dt} and \eqref{dt1}, we have that
\begin{equation}
\tilde{t}\left({\textstyle \bigvee_{i\in I}}A^{i}\right)={\textstyle \bigvee_{i\in I}}\tilde{t}\left(A^{i}\right),\quad\forall\;{\textstyle \bigvee_{i\in I}}A^{i}\in\mathsf{D}\left(\mathsf{F}_{\kappa}\right),\label{tas}
\end{equation}
and given a subset 
\[
\left\{ \mathbb{A}_{j}:j\in J\right\} \subseteq\mathsf{D}\left(\mathsf{F}_{\kappa}\right),
\]
with $\mathbb{A}_{j}={\textstyle \bigvee_{i\in I_{j}}}A_{j}^{i}$,
\begin{equation}
\begin{array}{lll}
\tilde{t}\left({\textstyle \bigvee_{j\in J}}\mathbb{A}_{j}\right) & = & \tilde{t}\left({\textstyle \bigvee_{j\in J}}{\textstyle \bigvee_{i\in I_{j}}}A_{j}^{i}\right)={\textstyle \bigvee_{j\in J}}{\textstyle \bigvee_{i\in I_{j}}}\tilde{t}\left(A_{j}^{i}\right)\\
\\
 & = & {\textstyle \bigvee_{j\in J}}\tilde{t}\left({\textstyle \bigvee_{i\in I_{j}}}A_{j}^{i}\right)={\textstyle \bigvee_{j\in J}}\tilde{t}\left(\mathbb{A}_{j}\right).
\end{array}\label{ttu}
\end{equation}
Also, from \eqref{fc}, \eqref{dt1} and \eqref{tas},
\[
\tilde{t}\left(\left({\textstyle \bigvee_{i\in I}}A^{i}\right)^{*}\right)=\tilde{t}\left({\textstyle \bigvee_{f\in F}}\mathbb{A}_{f,I}\right)={\textstyle \bigvee_{f\in F}}\tilde{t}\left(\mathbb{A}_{f,I}\right)
\]
and
\[
\tilde{t}\left(\mathbb{A}_{f,I}\right)={\textstyle \bigwedge_{\alpha\in\kappa}}e_{\alpha}\left(\mathbb{A}_{f,I}\left(\alpha\right)\right)={\textstyle \bigwedge_{\alpha\in\kappa}}e_{\alpha}\left({\textstyle \bigcap_{f\left(i\right)=\alpha}}\left(A^{i}\left(\alpha\right)\right)^{c}\right).
\]
On the other hand, the meet-join distributivity property for $\left(e,\mathsf{T}\right)$
ensures that\footnote{See Eq. \eqref{edb} for $B^{i}\left(\alpha\right)=\left(A^{i}\left(\alpha\right)\right)^{c}$.}
\[
{\textstyle \bigvee_{f\in F}}{\textstyle \bigwedge_{\alpha\in\kappa}}e_{\alpha}\left({\textstyle \bigcap_{f\left(i\right)=\alpha}}\left(A^{i}\left(\alpha\right)\right)^{c}\right)={\textstyle \bigwedge_{i\in I}\bigvee_{\alpha\in\kappa}}e_{\alpha}\left(\left(A^{i}\left(\alpha\right)\right)^{c}\right)=\left({\textstyle \bigvee_{i\in I}\bigwedge_{\alpha\in\kappa}}e_{\alpha}\left(A^{i}\left(\alpha\right)\right)\right)^{\bullet},
\]
so
\begin{equation}
\tilde{t}\left(\left({\textstyle \bigvee_{i\in I}}A^{i}\right)^{*}\right)=\left({\textstyle \bigvee_{i\in I}\bigwedge_{\alpha\in\kappa}}e_{\alpha}\left(A^{i}\left(\alpha\right)\right)\right)^{\bullet}=\left(\tilde{t}\left({\textstyle \bigvee_{i\in I}}A^{i}\right)\right)^{\bullet}.\label{tst}
\end{equation}

Now, define $t:{\textstyle \bigotimes_{\alpha\in\kappa}}\mathsf{E}_{\alpha}\rightarrow\mathsf{T}$
as the restriction of $\tilde{t}$. Since $\mathsf{F}_{\kappa}\subseteq{\textstyle \bigotimes_{\alpha\in\kappa}}\mathsf{E}_{\alpha}$,
it is clear from \eqref{tia2} that $t$ satisfies \eqref{tia}. Also,
given ${\textstyle \bigvee_{i\in I}}A^{i}\in{\textstyle \bigotimes_{\alpha\in\kappa}}\mathsf{E}_{\alpha}$,
since $\left({\textstyle \bigvee_{i\in I}}A^{i}\right)^{*}\in{\textstyle \bigotimes_{\alpha\in\kappa}}\mathsf{E}_{\alpha}$
too, then \eqref{tst} implies that
\[
t\left(\left({\textstyle \bigvee_{i\in I}}A^{i}\right)^{*}\right)=\left(t\left({\textstyle \bigvee_{i\in I}}A^{i}\right)\right)^{\bullet},
\]
i.e. $t$ respect the orthocomplementation. Finally, given an arbitrary
subset 
\[
\left\{ \mathbb{A}_{j}:j\in J\right\} \subseteq{\textstyle \bigotimes_{\alpha\in\kappa}}\mathsf{E}_{\alpha},
\]
from \eqref{ttu} and \eqref{tst} we have that
\[
\begin{array}{lll}
t\left({\textstyle \coprod_{j\in J}}\mathbb{A}_{j}\right) & = & t\left(\left({\textstyle \bigvee_{j\in J}}\mathbb{A}_{j}\right)^{**}\right)=\tilde{t}\left(\left({\textstyle \bigvee_{j\in J}}\mathbb{A}_{j}\right)^{**}\right)=\left(\tilde{t}\left({\textstyle \bigvee_{j\in J}}\mathbb{A}_{j}\right)\right)^{\bullet\bullet}\\
\\
 & = & \tilde{t}\left({\textstyle \bigvee_{j\in J}}\mathbb{A}_{j}\right)={\textstyle \bigvee_{j\in J}}\tilde{t}\left(\mathbb{A}_{j}\right)={\textstyle \bigvee_{j\in J}}t\left(\mathbb{A}_{j}\right).
\end{array}
\]
The fact that $t$ respects the meets follows from the De Morgan laws.
Summing up, the function $t:{\textstyle \bigotimes_{\alpha\in\kappa}}\mathsf{E}_{\alpha}\rightarrow\mathsf{T}$
such that
\begin{equation}
t\left({\textstyle \bigvee_{i\in I}}A^{i}\right)={\textstyle \bigvee_{i\in I}}{\textstyle \bigwedge_{\alpha\in\kappa}}e_{\alpha}\left(A^{i}\left(\alpha\right)\right)\label{dt2}
\end{equation}
 is a homomorphism of logics that satisfies \eqref{tia}. It rests
to see that it is the only one.

\bigskip{}

Note first that any element $A\in\mathsf{F}_{\kappa}$ satisfies
\begin{equation}
A={\textstyle \bigwedge_{\alpha\in\kappa}}\mathfrak{i}_{\alpha}\left(A\left(\alpha\right)\right).\label{aA}
\end{equation}
In fact, ${\textstyle \bigwedge_{\alpha\in\kappa}}\mathfrak{i}_{\alpha}\left(A\left(\alpha\right)\right)={\textstyle \bigcap_{\alpha\in\kappa}}\mathfrak{i}_{\alpha}\left(A\left(\alpha\right)\right)\in\mathsf{F}_{\kappa}$
and
\[
\left({\textstyle \bigcap_{\alpha\in\kappa}}\mathfrak{i}_{\alpha}\left(A\left(\alpha\right)\right)\right)\left(\beta\right)={\textstyle \bigcap_{\alpha\in\kappa}}\left(\mathfrak{i}_{\alpha}\left(A\left(\alpha\right)\right)\left(\beta\right)\right)=\left(\mathfrak{i}_{\beta}\left(A\left(\beta\right)\right)\left(\beta\right)\right)=A\left(\beta\right).
\]
Suppose that $t':{\textstyle \bigotimes_{\alpha\in\kappa}}\mathsf{E}_{\alpha}\rightarrow\mathsf{T}$
is a homomorphism of logics satisfying \eqref{tia}. If ${\textstyle \bigvee_{i\in I}}A^{i}\in{\textstyle \bigotimes_{\alpha\in\kappa}}\mathsf{E}_{\alpha}$,
then
\[
{\textstyle \bigvee_{i\in I}}A^{i}={\textstyle \coprod_{i\in I}}A^{i}={\textstyle \coprod_{i\in I}}{\textstyle \bigwedge_{\alpha\in\kappa}}\mathfrak{i}_{\alpha}\left(A^{i}\left(\alpha\right)\right)
\]
and (recall \eqref{dt})
\[
\begin{array}{lll}
t'\left({\textstyle \bigvee_{i\in I}}A^{i}\right) & = & t'\left({\textstyle \coprod_{i\in I}}A^{i}\right)=t'\left({\textstyle \coprod_{i\in I}}{\textstyle \bigwedge_{\alpha\in\kappa}}\mathfrak{i}_{\alpha}\left(A^{i}\left(\alpha\right)\right)\right)={\textstyle \bigvee_{i\in I}}{\textstyle \bigwedge_{\alpha\in\kappa}}t'\left(\mathfrak{i}_{\alpha}\left(A^{i}\left(\alpha\right)\right)\right)\\
\\
 & = & {\textstyle \bigvee_{i\in I}}{\textstyle \bigwedge_{\alpha\in\kappa}}e_{\alpha}\left(A^{i}\left(\alpha\right)\right)=t\left({\textstyle \bigvee_{i\in I}}A^{i}\right),
\end{array}
\]
what ensures that $t'=t$, as wanted. 
\end{proof}

\section{Future work}

\label{fw} In a seminal work of R.T. Cox \cite{cox} (see also \cite{cox2}
and \cite{jaynes}), an axiomatic foundation of probability in the
subjectivist interpretation (i.e. interpreted as a \textit{reasonable
expectation}) is presented. The work is made in the special context
of classical experiments, i.e. in the case of event spaces $\mathsf{E}$
given by Boolean algebras. In that work, Cox concludes that his axioms
(the \textit{Cox's axioms} from now on) give rise exactly to the usual
notion of probability, as defined by Kolmogorov \cite{kol}. But,
as noted in Ref. \cite{hal}, Cox's axioms are not enough to arrive
at such a conclusion. An additional condition is needed: the \textit{density
assumption }(see \cite{paris} and \cite{vanh}). Unfortunately, the
density assumption only makes sense when the event space $\mathsf{E}$
is infinite, excluding in this way the simple experiment of tossing
a coin. To solve this problem, in the unpublished pre-print \cite{tere},
the authors propose to study not only the reasonable expectation for
the event space $\mathsf{E}$ of the original experiment, but also
for the event space of the indefinitely repeated experiment. Since
the latter is always infinite, the density assumption has a greater
chance of being satisfied. In a work in progress we are considering
the idea of \cite{tere}, but in the context of general experiments,
i.e. experiments whose event space $\mathsf{E}$ is a general logic.
More concretely, we are studying a possible analogue of the Cox's
axioms in this more general situation, for which the universal logic
$\mathsf{U}_{\mathbb{N}}\left(\mathsf{E}\right)$ is playing a central
role.

\section*{Appendix: experiments, results and events}

Let us call \textit{procedure} to any set of duly prescribed actions
(e.g. those that are usually realized in a laboratory), and \textit{trial}
(or \textit{individual experiment}) to any ordered pair of procedures
$\left(p,m\right)$. Call \textit{preparation} (or \textit{stimulus})
to the first component $p$ and \textit{measurement} (or \textit{response})
to the second one $m$. Given two sets of procedures $\mathbb{P}$
and $\mathbb{M}$, we shall call \textbf{experiment} to the set of
trials $\mathbb{P}\times\mathbb{M}$, and we will say that $\mathbb{P}$
is the set of preparations of the experiment and $\mathbb{M}$ is
its set of measurements. In practice, after running a preparation
$p\in\mathbb{P}$ by the experimenter, it can follow only a certain
subclass of measurements $m\in\mathbb{M}$. This defines a relation
$\mathfrak{R}\subseteq\mathbb{P}\times\mathbb{M}$: the subset of
\textit{empirical trials}. Schematically: 
\begin{center}
\begin{tabular}{|c|c|c|c|c|c|c|}
\hline 
$\mathbb{P}$ & $\times$ & $\mathbb{M}$ & $=$ & $\mathbb{P}\times\mathbb{M}$ & $\supseteq$ & $\mathfrak{R}$\tabularnewline
\hline 
$\textrm{preparations}$ &  & $\textrm{measurements}$ &  & $\textrm{experiment}$ &  & $\textrm{empirical relation }$\tabularnewline
\hline 
\end{tabular}
\par\end{center}

Let us assume that $\mathbb{M}$ is partitioned into a disjoint union,
i.e. $\mathbb{M}=\bigvee_{\alpha}\mathbb{O}_{\alpha}$, where each
subset $\mathbb{O}_{\alpha}\subseteq\mathbb{M}$, which we shall call
\textit{observable}, is in bijection with a subset $I_{\alpha}\subseteq\mathbb{R}$,
whose elements we shall call \textbf{results} of $\mathbb{O}_{\alpha}$. 
\begin{rem*}
When the experiment corresponds to a classical (resp. quantum) mechanical
system with phase space $P$ (resp. with Hilbert space $\mathcal{H}$),
each observable $\mathbb{O}_{\alpha}$ is represented by a real-valued
function $f_{\alpha}$ on $P$ (resp. by a self-adjoint operator $O_{\alpha}$
on $\mathcal{H}$), and the subset $I_{\alpha}$ is the range of $f_{\alpha}$
(resp. the spectrum of $O_{\alpha}$).
\end{rem*}
We can assume (just for the sake of simplification) that all the procedures
inside $\mathbb{O}_{\alpha}$ are defined by a series of actions (prescribed
by a precise algorithm) that ends in writing a number $r\in I_{\alpha}$
in a notebook, and the only difference among such series of actions
is precisely the number that we must write: the result $r$. In other
words, the procedures in $\mathbb{O}_{\alpha}$ can be described by
pairs $\left(\alpha,r\right)$, with $r\in I_{\alpha}$ and $\alpha$
being the (sub)procedure that they all have in common. If the experimenter
runs $\left(\alpha,r\right)$ after running $p$, i.e. if we have
the empirical trial $\left(p,\left(\alpha,r\right)\right)\in\mathfrak{R}$,
we shall say that ``he/she obtained the result $r$ in the measurement
of the observable $\mathbb{O}_{\alpha}$ under the preparation $p$.''

In this scenario, fixing a preparation $p$, some of the possible
propositions about the results of the experiment are those of the
form: ``in the measurement of the observable $\mathbb{O}_{\alpha}$,
the result belongs to the subset $U\subseteq I_{\alpha}$''. We can
denote $\left(\alpha,U\right)$ to such propositions. Thus, a possible
event space $\mathsf{E}$ for the experiment $\mathbb{P}\times\mathbb{M}$
is the one given by the propositions $\left(\alpha,U\right)$ together
with those obtained from them by applying the logical connectives
AND, OR and NOT.

\section*{Acknowledgments}

The author thanks CONICET for its financial support. He also thanks
Martín Onetto, Diego Morcelle and José Luis Castiglione for their
useful comments and the fruitful discussions.


\begin{thebibliography}{10}
\bibitem{ash} R. Ash, \textit{Probability and Measure Theory}, Academic
Press, 2000.

\bibitem{blyth} T.S. Blyth, \textit{Lattices and Ordered Algebraic
Structures}, Springer-Verlag, 2005.

\bibitem{cox} R. T. Cox, \textit{Probability, frequency and reasonable
expectation}, American Journal of Physics, 14(1):1--13, 1946.

\bibitem{cox2} R. T. Cox, \textit{The algebra of probable inference},
Am. J. Phys., 31, 66-- 67, 1963.

\bibitem{hal} J. Y. Halpern, \textit{A Counterexample to Theorems
of Cox and Fine}, Journal of Artificial Intelligence Research, 10:67--85,
1999.

\bibitem{jaynes} E. Jaynes, \textit{Probability Theory: The Logic
of Science}, University Press, 2003.

\bibitem{kal} G. Kalmbach, \textit{Orthomodular lattices}, Academic
Press, 1983.

\bibitem{kol} A. Kolmogorov, \textit{Osnovnye Ponyatiya Teorii Veroyatnosti},
Ob''edinennoe nauchno-tekhnicheskoe izdatel'stvo NKTP SSSR, 1936.

\bibitem{kubi}T. Kubiak, I. Mardones-Pérez, \textit{Antitone involutions
on tensor products of complete lattices and complete distributivity,
}Fuzzy Sets and Systems 498 (2025), 109131.

\bibitem{mato}T. Matolcsi, \textit{Tensor product of Hilbert lattices
and free orthodistributive product of orthomodular lattices}, Acta
Sci. Math.(Szeged) Vol. 37, No. 3-4 (1975), 263-272.

\bibitem{paris} J. Paris, \textit{The Uncertain Reasoner's Companion},
University Press, 1994.

\bibitem{raney} G. Raney, \textsl{Completely Distributive Complete
Lattices}, Proceedings of the American Mathematical Society Vol. 3,
No. 5 (1952), pp. 677-680.

\bibitem{sik} R. Sikorski, \textit{Boolean algebras}, Springer-Verlag,
1969.

\bibitem{tere} A. Terenin, D. Draper, \textit{Cox's Theorem and the
Jaynesian Interpretation of Probability}, arXiv:1507.06597v2.

\bibitem{vanh} K. S. Van Horn, \textit{Constructing a Logic of Plausible
Inference: a Guide to Cox's Theorem}, International Journal of Approximate
Reasoning, 34(1):3--24, 2003.
\end{thebibliography}
\end{document}